\newif\ifpersonal
\numberwithin{equation}{section}
\theoremstyle{plain}
\newtheorem{theorem}[equation]{Theorem}
\newtheorem{lemma}[equation]{Lemma}
\newtheorem*{lemma*}{Lemma}
\newtheorem*{claim*}{Claim}
\newtheorem{proposition}[equation]{Proposition}
\newtheorem*{proposition*}{Proposition}
\newtheorem{corollary}[equation]{Corollary}
\newtheorem*{corollary*}{Corollary}
\theoremstyle{definition}
\newtheorem{definition}[equation]{Definition}
\newtheorem{definition-theorem}[equation]{Definition-Theorem}
\newtheorem{definition-lemma}[equation]{Definition-Lemma}
\newtheorem{construction}[equation]{Construction}
\newtheorem{example}[equation]{Example}
\newtheorem{remark}[equation]{Remark}
\theoremstyle{definition}
\numberwithin{equation}{section}
\newcommand{\personal}[1]{\textcolor[rgb]{0,0,1}{(Personal: #1)}}
\newcommand{\todo}[1]{\textcolor{red}{(Todo: #1)}}
\newcommand{\personal}[1]{\ignorespaces}
\newcommand{\discussion}[1]{\ignorespaces}
\newcommand{\todo}[1]{\ignorespaces}
\newcommand{\bbC}{\mathbb C}
\newcommand{\bbG}{\mathbb G}
\newcommand{\bbN}{\mathbb N}
\newcommand{\bbP}{\mathbb P}
\newcommand{\bbQ}{\mathbb Q}
\newcommand{\bbZ}{\mathbb Z}
\newcommand{\cA}{\mathcal A}
\newcommand{\cC}{\mathcal C}
\newcommand{\cF}{\mathcal F}
\newcommand{\cH}{\mathcal H}
\newcommand{\cI}{\mathcal I}
\newcommand{\cL}{\mathcal L}
\newcommand{\cM}{\mathcal M}
\newcommand{\cO}{\mathcal O}
\newcommand{\cW}{\mathcal W}
\newcommand{\bA}{\mathbf A}
\newcommand{\bt}{\mathbf t}
\newcommand{\bs}{\mathbf s}
\let\save@mathaccent\mathaccent
\newcommand*\if@single[3]{%
	\setbox0\hbox{${\mathaccent"0362{#1}}^H$}%
	\setbox2\hbox{${\mathaccent"0362{\kern0pt#1}}^H$}%
	\ifdim\ht0=\ht2 #3\else #2\fi
}
\newcommand*\rel@kern[1]{\kern#1\dimexpr\macc@kerna}
\newcommand*\widebar[1]{\@ifnextchar^{{\wide@bar{#1}{0}}}{\wide@bar{#1}{1}}}
\newcommand*\wide@bar[2]{\if@single{#1}{\wide@bar@{#1}{#2}{1}}{\wide@bar@{#1}{#2}{2}}}
\newcommand*\wide@bar@[3]{%
	\begingroup
	\def\mathaccent##1##2{%
		\let\mathaccent\save@mathaccent
		\if#32 \let\macc@nucleus\first@char \fi
		\setbox\z@\hbox{$\macc@style{\macc@nucleus}_{}$}%
		\setbox\tw@\hbox{$\macc@style{\macc@nucleus}{}_{}$}%
		\dimen@\wd\tw@
		\advance\dimen@-\wd\z@
		\divide\dimen@ 3
		\@tempdima\wd\tw@
		\advance\@tempdima-\scriptspace
		\divide\@tempdima 10
		\advance\dimen@-\@tempdima
		\ifdim\dimen@>\z@ \dimen@0pt\fi
		\rel@kern{0.6}\kern-\dimen@
		\if#31
		\overline{\rel@kern{-0.6}\kern\dimen@\macc@nucleus\rel@kern{0.4}\kern\dimen@}%
		\advance\dimen@0.4\dimexpr\macc@kerna
		\let\final@kern#2%
		\ifdim\dimen@<\z@ \let\final@kern1\fi
		\if\final@kern1 \kern-\dimen@\fi
		\else
		\overline{\rel@kern{-0.6}\kern\dimen@#1}%
		\fi
	}%
	\macc@depth\@ne
	\let\math@bgroup\@empty \let\math@egroup\macc@set@skewchar
	\mathsurround\z@ \frozen@everymath{\mathgroup\macc@group\relax}%
	\macc@set@skewchar\relax
	\let\mathaccentV\macc@nested@a
	\if#31
	\macc@nested@a\relax111{#1}%
	\else
	\def\gobble@till@marker##1\endmarker{}%
	\futurelet\first@char\gobble@till@marker#1\endmarker
	\ifcat\noexpand\first@char A\else
	\def\first@char{}%
	\fi
	\macc@nested@a\relax111{\first@char}%
	\fi
	\endgroup
}
\newcommand{\tH}{\widetilde H}
\newcommand{\tbeta}{\widetilde\beta}
\newcommand{\tmu}{\widetilde\mu}
\newcommand{\dbb}[1]{[\![#1]\!]}
\newcommand{\blambda}{\boldsymbol{\lambda}}
\newcommand{\pt}{\mathrm{pt}}
\newcommand{\tq}{\tilde q}
\newcommand{\bbk}{\Bbbk}
\newcommand{\id}{\mathrm{id}}
\newcommand{\Id}{\mathrm{Id}}
\newcommand{\Gr}{\mathrm{Gr}}
\newcommand{\iso}{\mathit{iso}}
\newcommand{\vir}{\mathrm{vir}}
\newcommand{\longto}{\longrightarrow}
\newcommand{\tcH}{\widetilde{\cH}}
\newcommand{\tcW}{\widetilde{\cW}}
\newcommand{\tnabla}{\widetilde{\nabla}}
\newcommand{\tbA}{\widetilde{\bA}}
\newcommand{\tf}{\widetilde{f}}
\newcommand{\tphi}{\widetilde{\phi}}
\DeclareMathOperator{\End}{End}
\DeclareMathOperator{\Hom}{Hom}
\DeclareMathOperator{\Aut}{Aut}
\DeclareMathOperator{\Mat}{Mat}
\DeclareMathOperator{\NE}{NE}
\DeclareMathOperator{\GL}{GL}
\DeclareMathOperator{\Spf}{Spf}
\DeclareMathOperator{\Spec}{Spec}
\DeclareMathOperator{\Frac}{Frac}
\DeclareMathOperator{\im}{im} 
\DeclareMathOperator{\PD}{PD} 
\DeclareMathOperator{\Jac}{Jac} 
\DeclareMathOperator{\QH}{QH} 
\DeclareMathOperator{\SG}{SG} 
\DeclareMathOperator{\coker}{coker}
\newcommand{\bigquantum}{\mathrm{big}}
\DeclareMathOperator{\ad}{ad} 
\newcommand{\btau}{\boldsymbol{\tau}}
\newcommand{\by}{\mathbf{y}}
\newcommand{\mir}{\mathrm{mir}}
\DeclareMathOperator{\SL}{SL}
\renewcommand{\top}{\mathrm{top}}
\renewenvironment{abstract}{%
  \quotation
  \small
  \textbf{\textit{\abstractname.}} 
}{\endquotation}
\begin{document}
\title[Unfolding of equivariant F-bundles and applications]{Unfolding of equivariant F-bundles and application to the mirror symmetry of flag varieties}

\author{Thorgal Hinault}
\address{Thorgal Hinault, Department of Mathematics, M/C 253-37, Caltech, 1200 E.\ California Blvd., Pasadena, CA 91125, USA}
\email{thinault@caltech.edu}
\author{Changzheng Li}
\address{School of Mathematics, Sun Yat-sen University, Xingangxi Road 135, Haizhu, Guangzhou, 510275, China, Room 815}
\email{lichangzh@mail.sysu.edu.cn}
\author{Tony Yue YU}
\address{Tony Yue YU, Department of Mathematics, M/C 253-37, Caltech, 1200 E.\ California Blvd., Pasadena, CA 91125, USA}
\email{yuyuetony@gmail.com}
\author{Chi Zhang}
\address{Chi Zhang, Department of Mathematics, M/C 253-37, Caltech, 1200 E.\ California Blvd., Pasadena, CA 91125, USA}
\email{czhang5@caltech.edu}
\author{Shaowu Zhang}
\address{Shaowu Zhang, Department of Mathematics, M/C 253-37, Caltech, 1200 E.\ California Blvd., Pasadena, CA 91125, USA}
\email{szhang7@caltech.edu}
\date{February 7, 2025}

\subjclass[2020]{Primary 14D15; Secondary 14M15, 14N35, 34M56}

\date{May 14, 2025}

\maketitle

\begin{abstract}
    We establish an unfolding theorem for equivariant F-bundles (a variant of Frobenius manifolds), generalizing Hertling-Manin's universal unfolding of meromorphic connections.
    As an application, we obtain the mirror symmetry theorem for the big quantum cohomology of flag varieties, from the recent works on the small quantum cohomology mirror symmetry, via the equivariant unfolding theorem.
\end{abstract}

\tableofcontents

\addtocontents{toc}{\protect\setcounter{tocdepth}{1}}
\section{Introduction}

\subsection{Motivations}

For a smooth complex projective variety $X$, the Gromov-Witten invariants of $X$ are roughly counts of algebraic curves in $X$ with given genus, class, and constraints (see \cite{Gromov_Pseudoholomorphic_curves,Witten_Two-dimensional_gravity,Kontsevich_Gromov-Witten_classes, Behrend_Intrinsic_normal_cone}).
We can organize the rational (i.e.\ genus zero) Gromov-Witten invariants into a generating series as follows.

Fix a homogeneous basis $(T_i)_{0\leq i\leq N}$ of $H^{\ast} (X,\bbQ )$, and let $(T_{i}^{*})_{0\leq i\leq N}$ denote the dual basis with respect to the Poincar\'e pairing.
Let $\bbQ \dbb{\NE (X,\bbZ )}$ denote the completion of $\bbQ [\NE (X,\bbZ )] = \bbQ [q^{\beta}\, \vert\, \beta\in \NE (X,\bbZ )]$ with respect to the maximal ideal $(q^{\beta} , \,\beta\neq 0)$.

The \emph{genus $0$ Gromov-Witten potential} is a formal power series
\[
  \Phi = \sum_{n\ge 0,\beta} \frac{q^\beta}{n!} \sum_{i_1,\dots,i_n} \braket{T_{i_1}\cdots T_{i_n}}_{0,n}^{\beta} t_{i_1}\cdots t_{i_n} \in \bbQ \dbb{\NE (X,\bbZ )}\dbb{t_0,\dots, t_N} ,
\]
where $\langle \cdots \rangle_{0,n}^{\beta}$ denotes the Gromov-Witten invariants of $X$ of genus $0$, class $\beta$ and cohomological constraints $T_{i_1}, \dots, T_{i_n}$.
It gives rise to the \emph{big quantum cohomology} of $X$, i.e.\ a deformation of the classical cup product on $H^*(X, \bbQ)$:
\begin{align*}
\star \colon H^{\ast} (X,\bbQ ) \otimes H^{\ast} (X,\bbQ ) &\longto H^{\ast} (X,\bbQ ) \otimes \bbQ \dbb{\NE (X,\bbZ )}\dbb{t_0,\dots, t_N} \\
T_i \star T_j &\longmapsto \sum_r \frac{\partial^3\Phi}{\partial t_i \partial t_j \partial t_r} T_{r}^{*} .
\end{align*}

A simpler version called \emph{small quantum cohomology} is the restriction of the big quantum cohomology to $t_i = 0$, for all $i=0, \dots, N$ (or equivalently, by the divisor axiom of Gromov-Witten invariants, for all $i$ with $\deg T_i \neq 2$).
The idea of small quantum cohomology appeared before the big version, first in \cite{Candelas_A_pair_of_Calabi-Yau}, where the small quantum cohomology of a quintic Calabi-Yau threefold was computed using the mirror manifold's periods.
This computation led to the curve counting invariants of the quintic that were previously unknown, and sparked decades of research of enumerative geometry and mirror symmetry from the mathematical viewpoint.

The small quantum cohomology mirror symmetry was proved in various cases, such as complete intersections in projective spaces in \cite{Givental_Equivariant_Gromov-Witten_invariants, Lian_Mirror_principle_I} and toric complete intersections in \cite{Givental_a_mirror_theorem_for_toric_complete_intersections, Reichelt_Sevenheck_non_affine, lly_mirror_principal_II}.
Given that the small quantum cohomology is the restriction of the big quantum cohomology, a natural question is whether mirror symmetry still holds for the big quantum cohomology.
The big quantum cohomology mirror symmetry was proved for projective spaces in \cite{Barannikov_semi_infinite_variation}, for quadric hypersurfaces in \cite{HuXiaowen}, for $\bbP^2$ via tropical geometry in \cite{Gross_Tropical_geometry_and_mirror_symmetry}, for toric varieties in \cite{Iritani_Shift_operators, iritani_a_mirror_construction_for_big, Reichelt_Thomas_Sevenheck_Christian_logarithmic_frobenius_manifolds_hypergeometric_systems_and_quantum_d_modules}, and for toric Deligne-Mumford stacks in \cite{Coates_Hodge-theoretic_mirror_symmetry_for_toric_stacks}.

One tool for such an extension is the reconstruction theorem for Gromov-Witten invariants by Kontsevich-Manin \cite{Kontsevich_Gromov-Witten_classes}, which is the prototype of the universal unfolding of Frobenius manifolds by Hertling-Manin \cite{Hertling_Unfoldings_of_meromorphic_connections} and that of logarithmic Frobenius manifolds by Reichelt \cite{Reichelt_Logarithmic-Frobenius-manifolds}.
This is the essential ingredient in the proof of big quantum cohomology mirror symmetry for projective spaces in \cite{Baranikov_semi_infinite_hodge_structutres,Barannikov_semi_infinite_variation}.
It is shown that the big quantum cohomology can be reconstructed from the small under the condition that the small quantum cohomology (or the classical cohomology) is $H^2$-generated.
The Hertling-Manin unfolding theorem applies more generally to so called (TE)-structures, or F-bundles $(\cH, \nabla)/B$, where $\cH$ is a vector bundle over $B\times \Spf \bbk\dbb{u}$ and $\nabla$ is a flat connection on $\cH$ with poles at $u=0$, such that $\nabla_{u^2\partial_u}$ and $\nabla_{u\xi}$ are regular for any tangent vector field $\xi$ on $B$.
The $H^2$-generation condition is then replaced by two conditions called (IC) and (GC).
For $b\in B$, the residues $\nabla_{u\xi}\vert_{(b,0)}$ and $\nabla_{u^2\partial_u}\vert_{(b,0)}$ are endomorphisms of the fiber $\cH_{b,0}$.
An element $v\in \cH_{b,0}$ satisfies the (GC) condition if the iterated action of these endomorphisms on $v$ generate $\cH_{b,0}$.
It satisfies the (IC) condition if the map $\xi\in T_b B\mapsto \nabla_{u\xi}\vert_{(b,0)} (v)$ is injective. 
Under those two conditions, the F-bundle admits a universal unfolding into a maximal F-bundle. 

Another tool for such an extension from small to big quantum cohomology is the reconstruction from a semisimple point.
In the context of Frobenius manifolds, the structure around a semisimple point was studied in \cite{Dubrovin_Geometry_of_2D,analytic_geometry_of_semisimple_coalescent_frobenius_structures}, and a reconstruction result was proved in \cite{Bayer_Semisimple_exercises,Milanov_Tseng_the_space_of_laurent_polynomials}.
Teleman also studied semisimplicity in the context of topological field theories in \cite{Teleman_2011}.

\medskip
In this paper, we aim to establish the big quantum cohomology mirror symmetry for flag varieties, in the sense of isomorphism of big quantum $D$-modules.
The small quantum cohomology mirror symmetry for general flag varieties was recently established in \cite{chow2024dmodulemirrorconjectureflag}, as an isomorphism of small quantum $D$-modules.
In general the small quantum cohomology of flag varieties is neither $H^2$-generated, nor semisimple, so neither of the above reconstruction methods can be applied here.

The \emph{main discovery} of this paper is that an analogous $H^2$-generation condition can be recovered if we work equivariantly with respect to a torus action.

We first extend the definition of F-bundle (from \cite{KKPY_Birational_invariants,HYZZ_decomposition_and_framing_of_F_bundles}) to equivariant F-bundle (see \cref{def:eq-F-bundle}).
Since the connection $\nabla_{\partial_u}$ is not linear with respect to the equivariant variables, we need to work with infinite rank F-bundles over an infinite dimensional base.
Nevertheless, most of the data can still be reduced to a finite rank (T)-structure relative to $H_T^{\ast} (\pt,\bbQ)$.

Next we extend the (IC) and (GC) conditions to the equivariant setting, and establish an unfolding theorem for equivariant F-bundles under these conditions (see \cref{thm:max-unfolding-arbitrary-rank}).

For application to the mirror symmetry of flag varieties, we produce an unfolding of the $B$-model by constructing an appropriate unfolding of the Landau-Ginzburg superpotential.
We further check the various conditions on the big quantum $D$-module of flag varieties, and apply our equivariant unfolding theorem to obtain the mirror symmetry theorem for the big quantum cohomology of flag varieties.

\subsection{Main results}

An \emph{F-bundle} $(\cH ,\nabla )$ over some base $B$ consists of a vector bundle $\cH$ over $B\times \Spf \bbk\dbb{u}$ and a meromorphic flat connection $\nabla$ with poles at $u=0$, such that $\nabla_{u^2\partial_u}$ and $\nabla_{u\xi}$ are regular for any tangent field $\xi$ on $B$.
If the connection $\nabla$ is only defined in the directions of $B$, we call $(\cH,\nabla )$ a $\bbk$-linear \emph{(T)-structure}.
In order not to create confusion in the infinite rank/dimension setting, we formulate F-bundles and (T)-structures in purely algebraic terms in \cref{sec:eq-F-bundles-and-T-structure}, replacing the vector bundle by a free module, and the connection by derivations.

Let us explain the various conditions involved in our equivariant unfolding theorem.
Let $\bbk$ be a field of characteristic zero, $R$ a $\bbk$-algebra and $(\cH ,\nabla )$ an F-bundle (resp.\ a (T)-structure) over $R\dbb{t_i, i\in I}$ for a countable set $I$, with fiber $H$ at $t=0$, $u=0$.
Residues of $\nabla$ induce $\mathrm{K} \coloneqq \nabla_{u^2\partial_u}\vert_{u=t=0} \in \End_R(H)$,
\[ \mu \colon \bigoplus_{i \in I} R\partial_{t_i} \longrightarrow \End_R (H), \quad \partial_{t_i} \longmapsto \nabla_{u\partial_{t_{i}}}\vert_{u=t=0} ,\]
and for any $v\in H$,
\[ \mu_v \colon \bigoplus_{i \in I} R\partial_{t_i} \longrightarrow H, \quad \partial_{t_i} \longmapsto \nabla_{u\partial_{t_{i}}}\vert_{u=t=0} (v) .\]

The F-bundle $(\cH ,\nabla )$ is called \emph{maximal} if there exists $v \in H$ such that $\mu_v$ is an isomorphism, and $v$ is called a cyclic vector.
We further define the following conditions on $v$ (see \cref{def:IC-GC-conditions})
\begin{itemize}
    \item[(IC)] The map $\mu_v$ is injective.
    \item[(GC)] The orbit of $v$ under the action of the subalgebra $R[\im\mu ,\mathrm{K}]\subset \End_R (H)$ (resp.\ $R[\im \mu]\subset \End_R (H)$ in the case of a (T)-structure) is $H$.
    \item[(GC')] The condition (GC) is satisfied after base change to $\Frac (R)$.
\end{itemize}
The conditions (IC) and (GC) were originally formulated in \cite{Hertling_Unfoldings_of_meromorphic_connections} as necessary conditions to obtain the existence and uniqueness of a maximal unfolding.
We find that when working relative to a ring, condition (GC') is enough for uniqueness, while conditions (IC) and (GC) need to be complemented by the assumption that $\coker\mu_v$ is free in order to construct a maximal unfolding (see \cref{thm:Hertling-Manin-F-bundle-intro} for a precise statement).

\subsubsection{Equivariant unfolding theorem}

For our application to the mirror symmetry of a flag variety $X = G/P$, the F-bundle associated to the quantum cohomology of $X$ does not satisfy conditions (GC) or (GC').
Our new idea is to consider the equivariant quantum cohomology of $X$ induced by the natural torus action.
Note that while the associated (T)-structure is linear over $R \coloneqq H_T^{\ast} (\pt,\bbk)$ and of finite rank, the connection $\nabla_{\partial_{u}}$ in the $u$-direction connection is \emph{not} $R$-linear due to the nontrivial grading on $R$.
Therefore, the associated F-bundle can only be defined over the base field $\bbk$, and hence has infinite rank and depends on infinitely many variables, indexed by a $\bbk$-basis of $H_T^{\ast} (X,\bbk)$.

We introduce the notion of \emph{equivariant F-bundle} in \cref{def:eq-F-bundle}.
Let $I$ be a finite set and choose a $\bbk$-linear basis of a $\bbk$-algebra $R$ indexed by a countable set $K$.
Let $\bt_I = \lbrace t_{i,k} \,|\, (i,k)\in I\times K\rbrace$ denote formal parameters over $\bbk$, and $t_I = \lbrace t_i \,|\, i\in I\rbrace$ formal parameters over $R$.
An equivariant F-bundle consists of the data $\lbrace (\cH ,\nabla ),\allowbreak (\cH_R,\nabla_R), \alpha\rbrace$, where $(\cH ,\nabla )$ is a $\bbk$-linear F-bundle over $\bbk\dbb{\bt_I}$ and $\lbrace (\cH_R,\nabla_R) , \alpha\rbrace$ is an $R$-linear lift over $R\dbb{t_I}$ of the $\bbk$-linear (T)-structure underlying $(\cH, \nabla)$.
An unfolding of an equivariant F-bundle is an extension over a bigger formal base (see \cref{def:unfolding_of_equivariant_F-bundle}).
We also generalize the notion of framing (from \cite[Definition 2.9]{HYZZ_decomposition_and_framing_of_F_bundles}) to equivariant F-bundles (\cref{def:framing-eq-F-bundle}), which consists of framings for the $\bbk$-linear F-bundle and the $R$-linear (T)-structure that are compatible under the lift.

We extend the (IC), (GC), (GC') and maximality conditions to equivariant F-bundles by requiring that the $R$-linear (T)-structure satisfy those conditions.
Our main theorem is the following unfolding theorem for equivariant F-bundles.

\begin{theorem}[Unfolding of equivariant F-bundles, \cref{thm:max-unfolding-arbitrary-rank}] \label{thm: intro}
    Let $\cF = \lbrace (\cH ,\nabla ) , (\cH_R,\nabla_R) ,\alpha \rbrace$ be an equivariant F-bundle over $\bbk\dbb{\bt_I}$, and fix $v\in \cH_R |_{u=t_I=0}$.
    \begin{enumerate}[wide]
        \item If $v$ satisfies (IC), (GC) and $\coker\mu_v$ is free, then $\cF$ admits a maximal unfolding with a cyclic vector induced from $v$.
        \item If $v$ satisfies (GC'), then any two maximal unfoldings of $\cF$ with cyclic vectors induced from $v$ are isomorphic under a unique isomorphism.
    \end{enumerate}
    Furthermore, any framing for $\cF$ induces a unique framing on a maximal unfolding.
\end{theorem}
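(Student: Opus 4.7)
The plan is to reduce the theorem to the non-equivariant unfolding statement (Theorem~\ref{thm:Hertling-Manin-F-bundle-intro}) applied to the $R$-linear (T)-structure $(\cH_R,\nabla_R)$, and then upgrade the resulting $R$-linear unfolding to an equivariant F-bundle by extending the $u^2\partial_u$ direction to the newly introduced variables. The guiding principle is that the $R$-linear (T)-structure carries essentially all the ``new'' unfolding information, while the $\bbk$-linear F-bundle contributes only the $u^2\partial_u$ data, which should be determined by flatness once the (T)-direction is fixed.

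For existence, I would first apply the non-equivariant unfolding theorem to $(\cH_R,\nabla_R)$: the hypotheses that $v$ satisfies (IC), (GC), and that $\coker\mu_v$ is free are precisely what is needed to produce a maximal $R$-linear unfolding $(\tilde\cH_R,\tilde\nabla_R)$ over $R\dbb{t_{\tilde I}}$ for some countable set $\tilde I\supset I$, with a cyclic vector induced from $v$. Using the chosen $\bbk$-basis of $R$ indexed by $K$, base-changing $\tilde\cH_R$ produces a $\bbk$-linear (T)-structure over $\bbk\dbb{\bt_{\tilde I}}$ that extends the $\bbk$-linear (T)-structure underlying $(\cH,\nabla)$, and the natural identification provides the candidate compatibility map $\tilde\alpha$. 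The key remaining step is to extend the derivation $\nabla_{u^2\partial_u}$ from $\bbk\dbb{\bt_I}$ to $\bbk\dbb{\bt_{\tilde I}}$. I would construct this extension recursively on the total degree in the new variables $\bt_{\tilde I}\setminus\bt_I$, using flatness with the already-known $\tilde\nabla_{u\partial_{\bt_j}}$ to solve for the extension order by order; the existence of a cyclic vector for the (T)-direction in $\tilde\cH_R$ ensures that the linear system produced by the flatness relations is uniquely solvable at each step.

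For uniqueness, given two maximal unfoldings of $\cF$ with cyclic vectors induced from $v$, I would apply the uniqueness part of the non-equivariant theorem under (GC') to the $R$-linear (T)-structure components, obtaining a unique $R$-linear isomorphism $\tilde\varphi_R$ between $\tilde\cH_R^{(1)}$ and $\tilde\cH_R^{(2)}$. Base change via the $\bbk$-basis transports $\tilde\varphi_R$ to a $\bbk$-linear (T)-structure isomorphism over $\bbk\dbb{\bt_{\tilde I}}$, and the uniqueness of the extension of $\tilde\nabla_{u^2\partial_u}$ established in the existence step forces $\tilde\varphi_R$ to intertwine the two $u^2\partial_u$ connections as well, upgrading it to a unique equivariant F-bundle isomorphism. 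The framing statement follows by applying the framing extension in the non-equivariant theorem to the $R$-linear side and transporting the resulting framing of $\tilde\cH_R$ via $\tilde\alpha$ to a $\bbk$-linear framing of $\tilde\cH$; compatibility with $\tilde\nabla_{u^2\partial_u}$ is again ensured by the recursive construction.

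The main obstacle is the extension of the $u^2\partial_u$ direction in the existence step: since $\nabla_{u^2\partial_u}$ is not $R$-linear, it is not furnished directly by the non-equivariant $R$-linear unfolding and must be built by a separate flatness-driven recursion. Verifying that this recursion closes order by order, that the resulting $\tilde\nabla_{u^2\partial_u}$ is compatible with $\tilde\alpha$, and that it restricts to the original $\nabla_{u^2\partial_u}$ on $\cH$ over $\bbk\dbb{\bt_I}$, is expected to be the most delicate part of the argument, and is precisely where the equivariant structure (via maximality of the $R$-linear (T)-structure and the cyclic vector) plays an essential role.
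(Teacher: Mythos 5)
Your overall strategy matches the paper's: unfold the $R$-linear (T)-structure via the finite-rank Hertling--Manin theorem, transport to a $\bbk$-linear (T)-structure over $\bbk\dbb{\bt_{\tilde I}}$ via the basis-change functor, and then extend the $u$-direction connection to the new variables. This is exactly the decomposition used in the paper. However, there is a genuine misconception in the crucial step of extending $\nabla_{u^2\partial_u}$, and it recurs throughout your sketch.

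You attribute the unique solvability of the flatness system for $\nabla_{u^2\partial_u}$ to ``the existence of a cyclic vector for the (T)-direction,'' and you repeat at the end that ``maximality of the $R$-linear (T)-structure and the cyclic vector'' is what makes the $u$-direction recursion close. This is not the right ingredient, and maximality plays no role at all in this step. What the paper actually uses is the existence of a \emph{framing} of the (T)-structure: in a framing trivialization the $t$-direction connection matrices $T^i$ are $u$-independent, which makes the system $\partial_{t_i}U = -T^i + u^{-1}[U,T^i]$ manifestly compatible (one checks $[D_i,D_j]=0$ and $D_i(T^j)=D_j(T^i)$ from (T)-flatness), and then the unique solution is produced from the initial condition $U(0,u)=\nabla_{1,\partial_u}|_{t=0}$. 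This is \cref{lemma:lift-T-structure-morphism-to-F-bundle}, which explicitly \emph{assumes} a framing and \emph{does not} assume a cyclic vector. The existence of such a framing over a Noetherian base, and its transport through the basis-change functor to the $\bbk$-linear side, is \cref{lemma:weak-framing-t-directions} together with \cref{lifting lemma}(2). Without the framing simplification, your order-by-order recursion would have to contend with the $u^{-1}[T^i,U]$ term when $T^i$ depends on $u$, and the compatibility check becomes considerably more delicate; the cyclic vector provides no help there. The same misattribution undermines the uniqueness paragraph: the isomorphism $\tilde\varphi_R$ is promoted to an F-bundle isomorphism not by cyclicity but by the framing-based uniqueness statement \cref{lemma:lift-T-structure-morphism-to-F-bundle}(2), applied after observing that $\tilde\varphi_R$ already intertwines the $u$-directions at $\bt_J=0$ because the given unfoldings restrict to F-bundle isomorphisms there.

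So your decomposition of the problem is right, but the mechanism you invoke for the $u$-direction step is wrong, and the step you flag as ``the most delicate part'' would not actually go through as you describe it. You need to replace the cyclic-vector argument by the framing argument (Lemmas \ref{lemma:lift-T-structure-morphism-to-F-bundle}, \ref{lemma:weak-framing-t-directions}, and \ref{lifting lemma}).
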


The fist step in our proof is to establish a formal version of the Hertling-Manin unfolding theorem in the finite rank case (see \cref{thm:Hertling-Manin-F-bundle-intro}).
Then we use it to unfold the $R$-linear (T)-structure.
Finally we conclude by unfolding the $\bbk$-linear F-bundle in the $u$-direction.
The key observation for the last step is the very useful \cref{lemma:lift-T-structure-morphism-to-F-bundle}.
It states that an equivariant F-bundle is uniquely determined by the underlying (T)-structure and the value of the $u$-direction connection at one point, under the assumption that the (T)-structure admits a framing.
This assumption always holds for the $\bbk$-linear (T)-structure associated to an equivariant F-bundle by \cref{lemma:weak-framing-t-directions} and \cref{lifting lemma}.

\begin{proposition}[\cref{lemma:lift-T-structure-morphism-to-F-bundle}]
    For $k=1,2$, let $I_k$ be a countable set and $(\cH_k ,\nabla_k) /R\dbb{t_j ,j\in I_k}$ an F-bundle.
    Let $(f,\Phi)\colon (\cH_1,\nabla_1)_0\rightarrow (\cH_2,\nabla_2)_0$ be a morphism of (T)-structures.
    Assume the (T)-structure $(\cH_1,\nabla_1)_0$ admits a framing.
    Then
    \begin{enumerate}[wide]
        \item $\nabla_1$ is uniquely determined by the underlying (T)-structure and $\nabla_{1,\partial_u}\vert_{t_{I_1} =0}$, and any such data determine a unique F-bundle connection extending the (T)-structure.
        \item $(f,\Phi)$ is an isomorphism of F-bundles if and only if $(f,\Phi)\vert_{t_{I_1}=0}$ is an isomorphism of F-bundles.
    \end{enumerate}
\end{proposition}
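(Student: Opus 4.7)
The plan is to translate the question into a first-order linear PDE in the $t$-variables via a framing of the underlying T-structure $(\cH_1, \nabla_1)_0$, and then invoke standard unique solvability and integrability for such systems. The framing hypothesis is what makes these formal power series manipulations concrete in the possibly infinite-rank setting.

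For Part (1), I would fix a framing of $(\cH_1, \nabla_1)_0$. In the induced trivialization, write $\nabla_{1, u\partial_{t_i}} = u\partial_{t_i} + C_i$ (fixed by the T-structure) and $\nabla_{1, u^2\partial_u} = u^2\partial_u + D$, with $D$ the unknown. A direct calculation shows that the mixed flatness identity $[\nabla_{1, u\partial_{t_i}}, \nabla_{1, u^2\partial_u}] = -u \nabla_{1, u\partial_{t_i}}$ is equivalent to
\[
u\partial_{t_i}(D) = u^2\partial_u(C_i) - [C_i, D] - uC_i.
\]
The $u^0$ coefficient of this equation forces $[C_i\vert_{u=0}, D\vert_{u=0}] = 0$, so it rewrites as a regular first-order linear PDE $\partial_{t_i}(D) = F_i(C_\bullet, D, u)$. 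I would then construct the unique formal solution with prescribed initial data $D\vert_{t_{I_1} = 0}$ by induction on the $t$-degree. Integrability of the system across different indices $i$ should follow from the T-structure flatness $[\nabla_{1, u\partial_{t_i}}, \nabla_{1, u\partial_{t_j}}] = 0$ combined with the Jacobi identity applied to the triple $(u\partial_{t_i}, u\partial_{t_j}, u^2\partial_u)$.

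For Part (2), the forward direction is immediate; for the converse, I would study the $\bbk\dbb{u}$-linear defect operator
\[
\Psi \coloneqq \Phi \circ \nabla_{1, u^2\partial_u} - \nabla_{2, u^2\partial_u} \circ \Phi \colon \cH_1 \longrightarrow \cH_2.
\]
A short computation using the morphism property $\Phi \circ \nabla_{1, u\partial_{t_i}} = \nabla_{2, u\partial_{t_i}} \circ \Phi$ together with the flatness relations for $\nabla_1$ and $\nabla_2$ yields
\[
\nabla_{2, u\partial_{t_i}} \circ \Psi = \Psi \circ \nabla_{1, u\partial_{t_i}},
\]
so that $\Psi$ itself is a morphism of T-structures. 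In the framing of $(\cH_1, \nabla_1)_0$, this identity becomes a homogeneous linear PDE in the $t$-variables for the matrix of $\Psi$. Since $\Psi\vert_{t_{I_1} = 0} = 0$ by hypothesis, the uniqueness argument from Part (1) forces $\Psi \equiv 0$, so $(f, \Phi)$ commutes with $\nabla_{u^2\partial_u}$. The global invertibility of $\Phi$ then follows from its invertibility at $t_{I_1} = 0$ by a standard formal power series argument over $R\dbb{t_{I_1}}$.

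The hard part will be verifying that the PDE in Part (1) produces a solution regular in $u$ throughout the iteration — the term $u^{-1}[C_i, D]$ must remain regular at every $t$-degree — and establishing the compatibility of the system across different indices. Both checks hinge on the interplay between the leading-order consequences of flatness and the Jacobi identity, and it is precisely the framing hypothesis (ensured for the $\bbk$-linear T-structure by the preceding lemmas in the equivariant setting) that makes this interplay explicit in the possibly infinite-rank setting.
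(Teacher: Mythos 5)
Your proposal is correct and takes essentially the same approach as the paper, so I will only flag small differences. For Part (1), both you and the paper fix a framing of $(\cH_1,\nabla_1)_0$, write $\nabla_{1,\partial_{t_i}} = \partial_{t_i} + u^{-1}T^i$ with $T^i$ independent of $u$, derive the linear system $\partial_{t_i}U = -T^i + u^{-1}[U,T^i]$ for $U = u^2\nabla_{1,\partial_u} - u^2\partial_u$, and verify consistency. One small correction: the Jacobi identity applied to $(u\partial_{t_i}, u\partial_{t_j}, u^2\partial_u)$ is not actually what is needed. Since the $T^i$ are $u$-independent, the $t$-flatness decouples by $u$-degree into $[T^i,T^j]=0$ and $\partial_{t_i}T^j = \partial_{t_j}T^i$, and these two identities alone give $[D_i,D_j]=0$ and $D_i(T^j)=D_j(T^i)$, which is the entire integrability condition; the paper verifies this directly. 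The $u$-regularity concern you flag at the end (the constraint $[T^i,U]\big|_{u=0}=0$ at each $t$-degree) is genuine, but it is also left implicit in the paper's appeal to "the usual theory of linear ODEs," so this is not a gap relative to the source. For Part (2), your defect-operator formulation $\Psi = \Phi\circ\nabla_{1,u^2\partial_u} - f^{\ast}\nabla_{2,u^2\partial_u}\circ\Phi$ works: the computation showing $\Psi$ intertwines $\nabla_{1,u\partial_{t_i}}$ and $\nabla_{2,u\partial_{t_i}}$ is a correct consequence of the T-structure morphism property together with the flatness identities $[\nabla_{k,u\partial_{t_i}},\nabla_{k,u^2\partial_u}] = -u\nabla_{k,u\partial_{t_i}}$, and then $\Psi|_{t=0}=0$ forces $\Psi\equiv 0$ by the same order-by-order argument. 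The paper's route is marginally slicker: it first upgrades $\Phi$ to an isomorphism of modules via the formal power series inversion of \cref{remark:free-module-power-series}, then transports the connection by $\nabla_2' \coloneqq \Phi^{-1}\circ f^{\ast}\nabla_2\circ\Phi$ and applies the Part (1) uniqueness directly to conclude $\nabla_2'=\nabla_1$, avoiding the separate homogeneous uniqueness argument for $\Psi$. Both routes are valid and essentially equivalent.
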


Here is our formal version of the Hertling-Manin unfolding theorem we mentioned above.
We also deduce a version for (T)-structures in \cref{thm:hertling-manin-T-structure}.

\begin{theorem}[Formal Hertling-Manin unfolding, \cref{thm:Hertling-Manin-F-bundles}] \label{thm:Hertling-Manin-F-bundle-intro}
    Let $R$ be an integral domain containing $\bbQ$.
    Let $(\cH ,\nabla ) /R\dbb{t_1,\dots, t_d}$ be a finite rank F-bundle.
    Let $v\in \cH/(t_1,\dots, t_d,u)\cH$.
    \begin{enumerate}[wide]
        \item If $v$ satisfies (IC), (GC) and $\coker\mu_v$ is free, then there exists a maximal unfolding with a cyclic vector induced from $v$.
        \item If $v$ satisfies (GC'), then any two maximal unfoldings of $(\cH,\nabla )$ with cyclic vectors induced from $v$ are isomorphic under a unique isomorphism.
    \end{enumerate}
    Furthermore, any framing for $(\cH,\nabla )$ induces a unique framing on a maximal unfolding.
\end{theorem}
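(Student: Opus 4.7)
The plan is to construct the maximal unfolding by inductively adding new formal variables $s_1,\dots,s_r$ with $r=\rk_R\coker\mu_v$, extending the connection order by order in $\bs=(s_1,\dots,s_r)$; and then to establish uniqueness by a parallel induction in which the prescribed value of the isomorphism on the cyclic vector, combined with (GC'), rigidifies everything. The framing assertion follows from an analogous induction lifting the frame along the new directions.

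\textbf{Existence.} Fix a lift $\hat v\in \cH/u\cH$ of $v$, and, using the freeness hypothesis, fix lifts $e_1,\dots,e_r\in \cH/u\cH$ of an $R\dbb{t}$-basis of $\coker\mu_v$. Set $\tilde R=R\dbb{t_1,\dots,t_d,s_1,\dots,s_r}$ and $\tcH=\cH\hotimes_{R\dbb{t}\dbb{u}}\tilde R\dbb{u}$. I build operators $\tnabla_{u\partial_{s_i}}$, together with extensions of $\tnabla_{u\partial_{t_j}}$ and $\tnabla_{u^2\partial_u}$ to $\tcH$, normalized so that at $s=t=u=0$ one has $\tnabla_{u\partial_{s_i}}(\hat v)\equiv e_i \pmod{\im\mu_v}$. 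The construction proceeds order by order in $\bs$: at each order the flatness identities $[\tnabla_{u\partial_{s_i}},\tnabla_{u\partial_{t_j}}]=0$, $[\tnabla_{u\partial_{s_i}},\tnabla_{u\partial_{s_j}}]=0$ and $[\tnabla_{u^2\partial_u},\tnabla_{u\partial_{s_i}}]=u\tnabla_{u\partial_{s_i}}$, together with the required regularity, become linear equations whose obstructions are governed by the cokernel of $\mu_v$ extended to $\tilde R$. The (IC) condition forces corrections to be unique modulo $\im\mu_v$, while the freeness of $\coker\mu_v$ allows a consistent choice of corrections compatible with the growing system; (GC) ensures that iterating this process exhausts the fiber, so that the final unfolding is maximal with $\tilde v:=\hat v$ a cyclic vector.

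\textbf{Uniqueness.} Let $(\tcH_1,\tnabla_1,\tilde v_1)$ and $(\tcH_2,\tnabla_2,\tilde v_2)$ be two maximal unfoldings of $(\cH,\nabla)$ whose cyclic vectors restrict to $v$. I will construct the isomorphism $(\tilde\Phi,\tilde f)$ inductively in the order of $\bs$. The flatness of $\tnabla_i$ and the prescription $\tilde\Phi(\tilde v_1)=\tilde v_2$ first determine $\tilde f$ modulo $(\bs)^2$ via matching the $\mu$-values on $\tilde v_i$, and the argument iterates at each subsequent order. Two candidate extensions at a given step differ by an automorphism of the previous unfolding fixing the cyclic vector; passing to $\Frac(R)$, condition (GC') implies such an automorphism is trivial, because its action is determined on the cyclic vector and hence on the whole $\Frac(R)\dbb{t,s}[\im\tilde\mu,\tilde{\mathrm K}]$-orbit, which spans everything. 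Integrality over $R$ at each order follows from the $R$-linearity of the construction, and assembling the orders gives the unique isomorphism over $R$.

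\textbf{Framings and main obstacle.} Any framing of $(\cH,\nabla)$ is extended to the maximal unfolding by parallelly lifting the chosen frame along the new directions $\partial_{s_i}$ using $\tnabla_{u\partial_{s_i}}|_{u=0}$; flatness guarantees the lift is consistent, while uniqueness follows as in the isomorphism argument above. The main technical obstacle I anticipate is the inductive solvability of the flatness equations in the existence proof, where the (IC)-driven uniqueness must be balanced against the need for complementary lifts provided by the freeness of $\coker\mu_v$. This is especially delicate because $R$ is only an integral domain, not a field, so the finer module-theoretic structure over $R\dbb{t,s}$ must be tracked throughout, and routine vector-space arguments over a field must be replaced by careful arguments using the prescribed freeness of the cokernel.
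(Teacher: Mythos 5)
Your high-level plan — construct the unfolding by an order-by-order induction in the new variables $\bs$, and prove uniqueness by a parallel induction using the cyclic vector — is in the same family of ideas as the paper's proof, which follows Hertling–Manin's Lemma 2.9. However, the roles you assign to (IC), (GC), and the freeness of $\coker\mu_v$ are substantially mixed up, and this is more than a cosmetic issue.

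In the paper's argument, the inductive solvability of the flatness system is driven entirely by (GC): the crucial step, isolated as \cref{lemma:existence-uniqueness-unfoldings}, shows that under (GC) alone one may prescribe \emph{arbitrary} functions $f_1,\dots,f_n$ controlling $\nabla_{u\partial_{s_j}}$ on the distinguished section $h_1$, and the whole system of flatness equations then has a unique solution at each order in $s$; the generation hypothesis is exactly what makes the commutant constraint determine the $s$-direction connection matrix uniquely. Neither (IC) nor freeness of $\coker\mu_v$ enters that induction. They enter only afterwards, in choosing the data $f_i$: (IC) guarantees the columns of the matrix of $\mu_v$ are independent, and freeness lets one complete that collection to an $R$-basis of the fiber — the completion dictates the $\partial f_i/\partial s_j$'s — so that the resulting unfolding is maximal. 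You have these essentially reversed (you say (IC) forces uniqueness of corrections in the induction and (GC) ensures maximality at the end), and you speak of "obstructions governed by $\coker\mu_v$" — but in the actual construction there are no obstructions, precisely because one is free to choose the $f_i$ and (GC) makes the order-by-order system unobstructed. This matters: an argument built around the claimed obstruction theory and (IC)-driven uniqueness would not close, because the place where something could go wrong is the solvability of the flatness equations, and that is settled by (GC), not by (IC).

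For uniqueness, the paper does not induct in $s$: it observes that in a good (framed) basis the $t$- and $s$-direction connection forms are closed, writes them as $u^{-1}dA_k$, and uses the first column of $A_k$ to define an automorphism $\psi_k$ of $R\dbb{t,s}$; maximality makes $d\psi_k\vert_{t=s=0}$ invertible, and any isomorphism of unfoldings must satisfy $\psi_2\circ f=\psi_1$, which pins down $f$ and then the bundle map over $R$ at once. Your order-by-order approach could in principle be made to work, but your one-line appeal to "$R$-linearity of the construction" for integrality over $R$ is not a proof; the paper instead gets integrality for free because $\psi_k$ and the good-basis identification are manifestly defined over $R$, and only needs $\Frac(R)$ to invoke \cref{lemma:existence-uniqueness-unfoldings}. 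Finally, the framing assertion is not a matter of parallel transport in the $s$-directions (a framing is a trivialization in which the connection matrices are polar in $u$, not a flat frame); the paper obtains it from the extension-of-framing theorem of \cite{HYZZ_decomposition_and_framing_of_F_bundles}.
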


Our proof follows mostly the original proof of Hertling and Manin, which was carried out in the complex analytic setting.
In particular, we produce unfoldings using the (GC) condition in \cref{lemma:existence-uniqueness-unfoldings}, which is the formal analogue of \cite[Lemma 2.9]{Hertling_Unfoldings_of_meromorphic_connections}.
While the original proof uses analytic methods to construct a framing of the (T)-structure in which the $u$-direction has a logarithmic pole at $u=1$,
we show that the proof actually works in any framing trivialization.

Since we are working over an integral domain $R$, the (IC) and (GC) conditions are not sufficient to prove existence, and we have to require that $\coker\mu_v$ is free in order to construct a maximal unfolding.
This ensures that we can extend a basis of $\im\mu_v$ to a basis of $\cH /(t_1,\dots, t_d,u)\cH$.
We prove the uniqueness by observing that under (GC'), an unfolding $(\cH',\nabla')$ is characterized by a choice of framing before unfolding and the action of $\nabla'$ on a section that extends $v$.
This allows us to compare unfoldings through their action on a section extending $v$, and to establish the isomorphism. 
A priori, the isomorphism we produce is only defined over $\Frac (R)$, but we note that it is in fact defined over $R$ if the unfoldings are.
A key result is the canonical extension of framing for (T)-structures (\cref{lemma:weak-framing-t-directions}), which was essentially proved in \cite{HYZZ_decomposition_and_framing_of_F_bundles}.

\subsubsection{Application to mirror symmetry of flag varieties}

We apply \cref{thm: intro} to the mirror symmetry of flag varieties $G/P$, where $G$ is a simply-connected complex simple Lie group and $P$ is a parabolic subgroup of $G$.
We begin by reviewing some relevant progress on the mirror symmetry of flag varieties.

On the $A$-side, there was a remarkable presentation of the small quantum cohomology ring $\QH^*(G/P)$ in terms of Peterson variety given in the unpublished lecture notes \cite{Quantum_cohomology_of_G/P} by Peterson.
This was partially verified in \cite{RieJAMS,LamShimo,Cheong}, and was recently proved in \cite{Chow_Peterson} in full generality.
On the $B$-side, Rietsch constructed a mirror Landau-Ginzburg model $(X_P^\vee, \mathcal{W})$ for $G/P$ in \cite{Riet}, and showed the coincidence between  the critical loci of $W$ and the Peterson variety strata. As a consequence, we obtain a first level of small quantum cohomology mirror symmetry in the sense of a ring isomorphism $\QH^*(G/P)\cong {\rm Jac}(\mathcal{W})$.
We refer to \cite{BCFKS, LRYZ} and the references therein for more relevant studies in the special case  $G=\SL(n+1, \mathbb{C})$. 

Furthermore, on the $A$-side, we can consider the Dubrovin connection on the trivial $\QH^*(G/P)$-bundle over $\mathbb{C}^*$, which endows the vector bundle with a quantum $D$-module structure. 
The flag variety $G/P$ admits a natural torus action by the maximal torus $T$ of $G$, so that we can consider the equivariant quantum $D$-module structure as well. 
On the $B$-side, we consider the  Brieskorn lattice $ G_0(X_P^{\vee}, \mathcal{W}, p)$ assoicated to  Rietsch's  equivariant superpotential  mirror to $G/P$ (see \cref{subsec:small-B-model} for more details). The small quantum cohomology mirror symmetry in the sense of isomorphism of small quantum $D$-modules has been studied for certain Grassmannians in \cite{MaRi,PRWMR3534834, PRMR3839374, LaTe}, and was recently established in \cite{chow2024dmodulemirrorconjectureflag}.
In the present paper, we first reformulate this in terms of an isomorphism $\mathcal{F}^A\cong \mathcal{F}^B$ of equivariant F-bundles. 
Then, as an application of \cref{thm: intro}, we obtain the following.

\begin{theorem}[Big quantum $D$-module mirror symmetry, Theorem \ref{thm:big-mirror-symmetry-flag}]\label{thm:intro-big-mirror-symmetry-flag}
    The $A$-model big equivariant F-bundle $\cF^{A,\bigquantum}$ is isomorphic to the $B$-model big equivariant F-bundle $\cF^{B,\bigquantum}$.
    The isomorphism is uniquely determined by the small equivariant quantum $D$-module mirror symmetry. 
\end{theorem}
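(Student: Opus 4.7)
The plan is to deduce the theorem from the uniqueness part of Theorem~\ref{thm: intro}, applied to the small equivariant mirror isomorphism $\cF^A \cong \cF^B$ of \cite{chow2024dmodulemirrorconjectureflag}. The strategy is to exhibit both $\cF^{A,\bigquantum}$ and $\cF^{B,\bigquantum}$ as maximal unfoldings of their respective small equivariant F-bundles, with cyclic vector induced from a common class $v = \mathbf{1}$ satisfying (GC'), and then invoke part~(2) of Theorem~\ref{thm: intro} to obtain a unique isomorphism extending the small mirror map. Compatibility of framings then follows from the ``furthermore'' clause of Theorem~\ref{thm: intro}.

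The most essential step, and the one where the equivariant setting is crucial, is the verification of (GC') for $v = \mathbf{1}$. Non-equivariantly the unit class $\mathbf{1}$ is generally not a cyclic generator, since $H^*(G/P, \bbk)$ fails to be $H^2$-generated for most parabolics $P$. However, in the equivariant setup the residue $\mathrm{K} = \nabla_{u^2\partial_u}\vert_{u=t=0}$ encodes an Euler-like endomorphism whose eigenvalues depend nontrivially on the equivariant parameters $R = H_T^*(\mathrm{pt},\bbk)$. After inverting $R$, the iterated action of the subalgebra $R[\im\mu, \mathrm{K}]$ on $\mathbf{1}$ is expected to generate $H_T^*(G/P, \bbk)\otimes_R \Frac(R)$, which is precisely condition (GC'). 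The key input is the generic semisimplicity of the small equivariant quantum cohomology of $G/P$, reflecting the isolated $T$-fixed points on $G/P$ and equivariant localization.

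The maximality of $\cF^{A,\bigquantum}$ over $\bbk\dbb{\bt_I}$ (with $I$ indexing a basis $(T_i)$ of $H_T^*(G/P,\bbk)$) is a standard consequence of the string and divisor axioms of Gromov--Witten theory: at the origin $\mu_{\mathbf{1}}(\partial_{t_i}) = T_i$, so $\mu_{\mathbf{1}}$ is bijective, and $\mathbf{1}$ is a cyclic vector. On the $B$-side, one constructs an unfolding of Rietsch's Landau--Ginzburg superpotential $\cW$ on $X_P^\vee$ by adjoining $|I|$ deformation parameters $\bt_I$, so that the associated Brieskorn lattice yields $\cF^{B,\bigquantum}$ and maximality translates into the statement that the first-order $\bt_I$-variations of $\cW$ span the Jacobian ring of $\cW$ at $t=0$. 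Under the small mirror isomorphism, $\mathbf{1}$ is identified with the constant function $1$ in the Jacobian ring, so the two cyclic vectors match canonically.

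With both big equivariant F-bundles exhibited as maximal unfoldings of the isomorphic small ones, with cyclic vector induced from the common $v=\mathbf{1}$ satisfying (GC'), part~(2) of Theorem~\ref{thm: intro} produces a unique isomorphism $\cF^{A,\bigquantum} \cong \cF^{B,\bigquantum}$ restricting to the small mirror map. The hardest step is the equivariant generation condition (GC'): it requires making precise how the residue $\mathrm{K}$ combines with equivariant quantum multiplication to recover the $H^2$-generation that is absent classically. This is highlighted in the introduction as the main discovery of the paper, and I would expect its proof to reduce, via Atiyah--Bott localization, to a fiberwise computation at the $T$-fixed points of $G/P$, where the Peterson-type description of $\QH_T^*(G/P)$ from \cite{Chow_Peterson} should provide the explicit control needed to compare $\im\mu$, $\mathrm{K}$, and their common orbit through $\mathbf{1}$.
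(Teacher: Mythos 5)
Your overall strategy is correct and matches the paper: exhibit $\cF^{A,\bigquantum}$ and $\cF^{B,\bigquantum}$ as maximal unfoldings of the small equivariant F-bundles $\cF^A$, $\cF^B$, with a common cyclic vector $v = \mathbf{1}$ (identified with $[\Omega]$ under the small mirror map), verify (GC'), and invoke the uniqueness part of the equivariant unfolding theorem. The $A$-side maximality (Proposition~\ref{prop: A model maximal}) and the construction of the unfolded superpotential $\widetilde{\cW}$ with linear deformation terms $y_j f_j$ on the $B$-side are as you sketch.

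However, your description of the key mechanism for (GC') is not the one the paper uses, and in fact cannot work as stated. You propose that (GC') should follow from the orbit of $\mathbf{1}$ under $R[\im\mu, \mathrm{K}]$, citing the residue $\mathrm{K} = \nabla_{u^2\partial_u}\vert_{u=t=0}$ and appealing to generic semisimplicity of $\QH_T^*(G/P)$ via Atiyah--Bott localization. But recall that the equivariant (IC), (GC), (GC') conditions are defined by requiring the underlying \emph{$R$-linear (T)-structure} to satisfy them, and for (T)-structures the relevant subalgebra in (GC) is $R[\im\mu]$ only --- the $R$-linear (T)-structure has no $u$-direction connection, so $\mathrm{K}$ is simply not available. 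The paper's verification of (GC') in the proof of Theorem~\ref{thm:big-mirror-symmetry-flag} does not use $\mathrm{K}$ or semisimplicity at all; it uses Lemma~\ref{lemma:QHG/PH2}, which says the localized equivariant small quantum cohomology $\QH^*_T(G/P)\otimes_{\bbC[\lambda]}\bbC(\lambda)$ is generated as a $\bbC(\lambda)[\tilde q]$-algebra by the degree-$2$ classes $\{\sigma_{s_\alpha}\mid \alpha \in \Delta\setminus\Delta_P\}$. This lemma reduces, via \cite[Lemma 2.1]{SiTi}, to the classical equivariant statement (Lemma~\ref{lemma:G/PH2}, from \cite[Lemma~5.11]{BCMP}) that $H^*_T(G/P)\otimes\bbC(\lambda)$ is generated in degree~$2$ after localization. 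So the ``main discovery'' highlighted in the introduction is that equivariant localization restores \emph{$H^2$-generation}, not semisimplicity, and this is an algebraic statement about the cohomology ring rather than a fixed-point/Peterson computation. Your attempt to bring in $\mathrm{K}$ points in the wrong direction structurally.

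A smaller inaccuracy: on the $A$-side the evaluation map at the origin gives $\mu_{\mathbf{1}}(\partial_{\tau_j}) = \sigma_{v_j} + \lambda_{i_j}$ (the equivariant first Chern class $c_1^T(L_j)$) for $1\le j\le r$, not simply $\sigma_{v_j}$; maximality then follows from the observation that the $\lambda_{i_j}$-shifts are lower order in the $R=\bbk[\lambda]$-module filtration, so $(\sigma_{v_j}+\lambda_{i_j})_j$ is still an $R$-basis of $H_T^*(G/P,\bbk)$ because $(\sigma_{v_j})_j$ is. This does not rest on the string and divisor axioms as you suggest.
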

By taking the non-equivariant limit of the isomorphism in \cref{thm:intro-big-mirror-symmetry-flag}, we obtain a non-equivariant version of the big quantum cohomology mirror symmetry for flag varieties, see \cref{thm:non-equivariant}.

Note that the small quantum cohomology $\QH^*(G/P)$ can be neither semisimple nor $H^2$-generated, such as is the case when $G/P=\SG(2, 2n)$ is the Grassmannian of isotropic planes in Lie type $C_n$ (see \cite{ChPe}).
Therefore, the application of our unfolding theorem is essential in such cases, for which neither the unfolding in \cite{Hertling_Unfoldings_of_meromorphic_connections} nor the semisimple reconstruction in \cite{Teleman_2011} can be applied.

In addition to the mirror statement above, we further anticipate the complex analytic convergence of the mirror map, as well as the compatibility with the pairings on the F-bundles.
These aspects present promising directions for future research.

\subsection*{Acknowledgements}
We would like to thank Giordano Cotti, Mark Gross, Xiaowen Hu, Hiroshi Iritani, Ludmil Katzarkov, Maxim Kontsevich, Leonardo Mihalcea, Todor Milanov, Tony Pantev, Constantin Teleman and Yukinobu Toda for valuable discussions. 
C.\ Li was supported by the National Key R \& D  Program of China No. 2023YFA1009801.
The other authors were partially supported by NSF grants DMS-2302095 and DMS-2245099.

\addtocontents{toc}{\protect\setcounter{tocdepth}{2}}

\section{(T)-structures and equivariant F-bundles} \label{sec:eq-F-bundles-and-T-structure}

We fix a field $\bbk$ of characteristic zero, a $\bbk$-algebra $R$ and a $\bbk$-linear basis $(\lambda_k )_{k\in K}$ of $R$, with $K$ a countable set.

\subsection{Completions} \label{subsec:completions}
We set the conventions for completions of rings of polynomials in infinitely many variables, following \cite[\S 2.1]{iritani_a_mirror_construction_for_big}.
Our reference for topological algebra is \cite[0\S7]{Fujiwara_Foundations-rigid-geometry-I}.

Let $I$ be a countable set indexing indeterminates $t_I = \lbrace t_i , i\in I\rbrace$. 
We denote by $\bbN^{(I)}$ the set of almost zero integer sequences indexed by $I$.
Let $M$ be a module or ring, we denote by $M\dbb{t_I}$ the module consisting of formal power series $\sum_{\alpha\in\bbN^{(I)}} a_{\alpha} t_I^{\alpha}$, where $t_I^{\alpha} \coloneqq\prod_{i\in I} t_i^{\alpha_i}$ and $a_{\alpha}\in M$.
It is the projective limit of the modules $M\dbb{t_i ,i\in I'}$, where $I'\subset I$ runs through finite subsets.
For two countable sets $I$ and $I''$, we have $M\dbb{t_i,i\in I}\dbb{t_i, i\in I''} \simeq M\dbb{t_i,i\in I\cup I''}$.

If $M$ is linearly topologized by the descending chain of submodules $\lbrace M_{\lambda}\rbrace_{\lambda\in \Lambda}$, we equip $M\dbb{t_I}$ with the linear topology induced by the submodules
\begin{equation}\label{eq:topology-power-series}
    M\dbb{t_I}_{\lambda, \cI} \coloneqq \left\lbrace \sum_{\alpha\in\bbN^{(I)}} a_{\alpha} t_I^{\alpha} ,\; a_{\alpha}\in M_{\lambda} \text{ for all } \alpha\in \cI \right\rbrace ,
\end{equation}
where $\lambda\in \Lambda$ and $\cI\subset \bbN^{(I)}$ is a finite set of exponents.
The convergence of a sequence for this topology means that the sequence of coefficients of each monomial converges in $M$.
Hence, if $M$ is complete, so is $M\dbb{t_I}$.
If $R$ is a topological ring and $M$ is a topological $R$-module, then $R\dbb{t_I}$ is a topological ring and $M\dbb{t_I}$ is a topological $R\dbb{t_I}$-module.
If $R$ is a topological $\bbk$-algebra, then $R\dbb{t_I}$ is a topological $\bbk\dbb{t_I}$-algebra. 

Let $R$ be a discrete ring, let $M$ be an $R$-module.
The closure of the monomial ideal $(t_i ,i\in I)\subset R\dbb{t_I}$ is the ideal $\mathcal{J} \coloneqq \lbrace f\in R\dbb{t_I} , \; f\vert_{t_I=0} = 0\rbrace$.
If $I$ is finite, those two ideals coincide and the topology on $M\dbb{t_I}$ is equivalent to the usual $(t_i, i\in I)$-adic topology.
When $I$ is infinite, the $\mathcal{J} $-adic topology is finer, which means that for any finite subset $\cI\subset \bbN^{(I)}$ there exists $n\in\bbN$ such that $\mathcal{J} ^nM\dbb{t_I}\subset M\dbb{t_I}_{\cI}$.

\begin{remark} \label{remark:free-module-power-series} 
    Let $I$ be a countable set, $t_I = \lbrace t_i,i\in I\rbrace$ a set of indeterminates.
    Let $R$ be a topological ring.
    Here are a few facts we will use about modules over $R\dbb{t_I}$.
    \begin{enumerate}[wide]
        \item If $M$ is a free $R$-module, then $M\dbb{t_I}$ is free, and we have a canonical isomorphism $M\otimes_R R\dbb{t_I}\simeq M\dbb{t_I}$ given by $m\otimes 1\mapsto m$.
        \item If $M$ and $M'$ are free $R$-modules, there is a canonical isomorphism of $R\dbb{t_I}$-modules
        \[\Hom_{R\dbb{t_I}} (M\dbb{t_I} , M'\dbb{t_I} ) \simeq \Hom_R (M,M')\dbb{t_I}.\]
        \item If $R$ is discrete and $M$ is a free $R$-module, an element $\Phi\in \End_{R\dbb{t_I}} (M\dbb{t_I})$ is an isomorphism if and only if $\Phi\vert_{t_I =0}\in \End_R (M)$ is an isomorphism.
    \end{enumerate}
    For (3), we may reduce to the case $\Phi = 1 + f$ with $f\in \mathcal{J}  \End_R (M)\dbb{t_I}$.
    Then it suffices to prove that the sequence $\Psi_n\coloneqq \sum_{k=0}^n (-1)^k f^k$ converges in $\End_R (M)\dbb{t_I}$.
    For $m\geq n$ we have $\Psi_{m}- \Psi_n = \sum_{k=n+1}^{m} (-1)^{k} f^k\in \mathcal{J} ^{n+1}\End_R (M)\dbb{t_I}$.
    Since the $\mathcal{J} $-adic topology is finer than the topology \eqref{eq:topology-power-series}, the sequence $(\Psi_n)_n$ is a Cauchy sequence.
    Since $\End_R (M)$ is a discrete space, it is complete.
    We conclude that $(\Psi_n)_n$ converges to an element $\Psi$ such that $\Phi\circ\Psi = \Psi\circ\Phi = 1$.
\end{remark}

\subsection{F-bundles and (T)-structures}

We equip $R$ with the discrete topology.
Given a countable set $I$, the derivations $\partial_{t_j}\colon R\dbb{t_i,i\in I}\rightarrow R\dbb{t_i,i\in I}$ are continuous and linearly independent.
Hence, it makes sense to define a (partial) connection in the $t$-directions on a $R\dbb{t_i ,i\in I}$-module $\cH$ by specifying its action on $\partial_{t_j}$ for all $j\in I$.

\begin{definition}[F-bundle, (T)-structure]
    Let $I$ be a countable set and $t_I \coloneqq \lbrace t_i ,i\in I\rbrace$.
    \begin{enumerate}[wide]
        \item An ($R$-linear) \emph{F-bundle} $(\cH ,\nabla )$ over $R\dbb{t_I}$ is a free $R\dbb{t_I,u}$-module $\cH$ together with a ($R$-linear) connection 
        \begin{align*}
            \nabla_{\partial_{t_i}}&\colon \cH\rightarrow u^{-1}\cH ,\\
            \nabla_{\partial_u} &\colon \cH\rightarrow u^{-2}\cH
        \end{align*}
        satisfying the flatness condition.
        \item An ($R$-linear) \emph{(T)-structure} $(\cH,\nabla )$ over $R\dbb{t_I}$ is a free $R\dbb{t_I,u}$-module $\cH$ together with a ($R$-linear) connection in the $t$-directions
        \[\nabla_{\partial_{t_i}}\colon \cH\rightarrow u^{-1}\cH\]
        satisfying the flatness condition.
    \end{enumerate}
    A \emph{morphism of F-bundles (resp.\ (T)-structures)} $(f,\phi )\colon (\cH,\nabla ) /R\dbb{t_I}\rightarrow (\cH',\nabla') /R\dbb{t_J}$ consists of a continuous map of $R$-algebras $f\colon R\dbb{t_J}\rightarrow R\dbb{t_I}$, and a continuous map of $R\dbb{t_I,u}$-modules $\phi\colon \cH\rightarrow f^{\ast}\cH'\coloneqq \cH'\otimes_{R\dbb{t_J,u}}R\dbb{t_I,u}$ such that $\phi\circ\nabla = f^{\ast}\nabla'\circ\phi$.
    
\end{definition}
Underlying an F-bundle $(\cH ,\nabla )$ over $R\dbb{t_I}$ is an $R$-linear (T)-structure $(\cH ,\nabla )_0$ over $R\dbb{t_I}$ obtained by forgetting $\nabla_{\partial_u}$.
This defines a functor $(\cdot )_0$ from $R$-linear F-bundles to $R$-linear (T)-structures.

Let $(\cH ,\nabla )$ be an $R$-linear (T)-structure over $R\dbb{t_I}$.
A \emph{trivialization} of the (T)-structure is a choice of isomorphism $\cH \simeq H\otimes_R R\dbb{t_I,u}$, where $H$ is a free $R$-module (necessarily isomorphic to $\cH /J\cH$, where $J$ is the closure of the ideal $(t_I,u)$).
Under such an isomorphism, the connection $\nabla$ decomposes as $\nabla_{\partial_{t_i}} = \partial_{t_i} + u^{-1} \bA_i (t_I,u)$,  with $\bA_i\in \End_R (H)\dbb{t_I,u}$. 
We refer to $\bA_i$ as the \emph{connection matrix in the direction $t_i$}.  
Different choices of trivialization produce connection matrices related by the usual gauge-transformation formula.

We introduce special trivializations called framings.

\begin{definition}[Framing] \label{def:framing}
\begin{enumerate}[wide]
    \item A \emph{framing} for an $R$-linear F-bundle (resp.\ an $R$-linear (T)-structure) $(\cH,\nabla)/R\dbb{t_I}$ is a trivialization in which the connection matrices only have negative powers of $u$.
    \item A morphism of framed F-bundles (resp.\ (T)-structures) \[(f,\phi)\colon (\cH,\nabla)/R\dbb{t_I}\rightarrow (\cH',\nabla')/R\dbb{t_J}\] is \emph{compatible with the framings} if it is constant when read in framing trivializations. 
    More precisely, the framings $\cH\simeq H\otimes_R R\dbb{t_I,u}$ and $\cH'\simeq H'\otimes_R R\dbb{t_J,u}$ induce an isomorphism
    \[\Hom_{R\dbb{t_I,u}} (\cH, f^{\ast}\cH') \simeq \Hom_R (H,H')\dbb{t_I,u}. \]
    The condition is that the image of $\phi$ is independent of $t_I$ and $u$.
\end{enumerate}
\end{definition}

\subsection{Lift of (T)-structures} \label{subsec:lift-T-structure}
Recall that we have fixed $R$ a $\bbk$-algebra and a $\bbk$-basis $\blambda = (\lambda_k ,k\in K)$ of $R$.
Let $I$ be a countable set, we introduce two sets of formal variables 
\begin{align*}
    t_I\coloneqq \lbrace t_i ,i\in I\rbrace , \quad \bt_I\coloneqq \lbrace t_{i,k} ,(i,k)\in I\times K\rbrace .
\end{align*}
There is a continuous morphism of $R$-algebras
\begin{align} \label{eq:change-of-variable-T-structure}
    \psi_{\blambda}\colon R\dbb{t_I}\longrightarrow R\dbb{\bt_I} ,\quad t_i\mapsto \sum_{k\in K} \lambda_k t_{i,k} .
\end{align}
This induces a functor $(\cH ,\nabla) /R\dbb{t_I}\mapsto (\tcH ,\tnabla )/ \bbk\dbb{\bt_I}$ from $R$-linear (T)-structures to $\bbk$-linear (T)-structures.

\begin{lemma} \label{lifting lemma}
Let $R$ be a $\bbk$-algebra with a fixed $\bbk$-basis $\blambda = (\lambda_k , k\in K)$.
\begin{enumerate}[wide]
    \item There exists a functor $(\cH,\nabla)/ R\dbb{t_I}\mapsto (\tcH ,\tnabla )/ \bbk\dbb{\bt_I}$ from $R$-linear (T)-structures to $\bbk$-linear (T)-structures.
    It is obtained by applying the change of variable \eqref{eq:change-of-variable-T-structure} and forgetting the $R$-linear structure.
    \item Any framing for $(\cH ,\nabla ) /R\dbb{t_I}$ induces a framing for $(\tcH ,\tnabla ) / \bbk\dbb{\bt_I}$.
\end{enumerate}
    
\end{lemma}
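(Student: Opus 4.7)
The plan is to define $(\tcH,\tnabla)$ by pulling back $(\cH,\nabla)$ along the ring map $\psi_\blambda\colon R\dbb{t_I,u}\to R\dbb{\bt_I,u}$ (extended by $u\mapsto u$) and then restricting scalars from $R\dbb{\bt_I,u}$ down to $\bbk\dbb{\bt_I,u}$. Concretely, set $\tcH:=\cH\otimes_{R\dbb{t_I,u}}R\dbb{\bt_I,u}$ and view it as a $\bbk\dbb{\bt_I,u}$-module via $\bbk\dbb{\bt_I,u}\hookrightarrow R\dbb{\bt_I,u}$.

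For part (1), freeness of $\tcH$ over $\bbk\dbb{\bt_I,u}$ follows from two applications of \cref{remark:free-module-power-series}(1): the first shows that $\tcH$ is free over $R\dbb{\bt_I,u}$ by base change, and the second, applied to $R$ as a free $\bbk$-module with basis $\blambda$, shows that $R\dbb{\bt_I,u}$ is itself free over $\bbk\dbb{\bt_I,u}$. The connection $\tnabla$ is defined from the chain rule identity $\partial_{t_{i,k}}\circ\psi_\blambda = \lambda_k\,\psi_\blambda\circ\partial_{t_i}$: for $\sigma\in\cH$, the formula is
\[
\tnabla_{\partial_{t_{i,k}}}(\sigma\otimes 1):=\lambda_k\,(\nabla_{\partial_{t_i}}\sigma)\otimes 1,
\]
extended by Leibniz. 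Well-definedness (independence of the chosen representative of a pure tensor) reduces to $R$-linearity of $\nabla$, by confronting the two ways of evaluating $\tnabla_{\partial_{t_{i,k}}}(g\sigma\otimes 1)$ for $g\in R\dbb{t_I,u}$. Flatness of $\tnabla$ follows from flatness of $\nabla$ combined with the elementary identity $[\lambda_k\partial_{t_i},\lambda_l\partial_{t_j}]=0$ on $R\dbb{t_I}$. Functoriality in morphisms of $R$-linear (T)-structures is immediate from naturality of base change.

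For part (2), suppose a framing $\cH\simeq H\otimes_R R\dbb{t_I,u}$ is given, with $H$ a free $R$-module. Base change along $\psi_\blambda$ yields $\tcH\simeq H\otimes_R R\dbb{\bt_I,u}$; a third invocation of \cref{remark:free-module-power-series}(1), together with the identification $R\simeq\bigoplus_k\bbk\lambda_k$, rewrites this as $H\otimes_\bbk\bbk\dbb{\bt_I,u}$, where $H$ is now viewed as a free $\bbk$-module. Setting $\tH:=H$ supplies a trivialization of $\tcH$. Writing $\nabla_{\partial_{t_i}}=\partial_{t_i}+u^{-1}\bA_i$ in the original framing, the pulled-back connection matrices compute as $\tbA_{i,k}=\lambda_k\,\psi_\blambda(\bA_i)$. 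Since $\psi_\blambda$ fixes $u$ and $\lambda_k\in R$ is a $u$-constant, the polar structure in $u$ is unchanged, so the $\tbA_{i,k}$ only involve negative powers of $u$, verifying the framing condition of \cref{def:framing}.

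I expect the main obstacle to be bookkeeping around the freeness condition: carefully tracking that base change followed by restriction of scalars yields a genuinely free $\bbk\dbb{\bt_I,u}$-module, especially when $K$ is infinite so that $R$ is infinite dimensional over $\bbk$ and the natural identification $R\dbb{\bt_I,u}\simeq R\hotimes_\bbk\bbk\dbb{\bt_I,u}$ requires the completion conventions of \cref{subsec:completions}. Once the correct interpretation of \cref{remark:free-module-power-series}(1) is in place, the remaining verifications — the Leibniz rule, flatness, and preservation of framing — are routine applications of the chain rule for $\psi_\blambda$.
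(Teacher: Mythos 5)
Your proposal is correct and follows essentially the same route as the paper: define $\tcH$ by base change along $\psi_{\blambda}$ followed by restriction of scalars, define $\tnabla$ via the chain rule identity $\partial_{t_{i,k}}\circ\psi_{\blambda}=\lambda_k\,\psi_{\blambda}\circ\partial_{t_i}$, and push a framing trivialization through the change of variables. The one minor difference is that you define $\tnabla$ intrinsically on simple tensors and extend by Leibniz, whereas the paper works in a fixed trivialization (writing $\tnabla_{\partial_{t_{i,k}}}=\partial_{t_{i,k}}+u^{-1}\lambda_k\tbA_i$) and then checks independence of the trivialization; these are equivalent. One small gloss: your ``functoriality is immediate from naturality of base change'' elides the construction of the induced map $\tf\colon\bbk\dbb{\bs_J}\to\bbk\dbb{\bt_I}$ on base rings, which the paper produces explicitly from the characterization $\psi_{\blambda}\circ f(s_j)=\sum_k\lambda_k\,\tf(s_{j,k})$; this is routine but not purely formal.
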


\begin{proof}
    Let $(\cH ,\nabla )$ be an $R$-linear (T)-structure over $R\dbb{t_I}$.
    We define $\tcH$ to be the $\bbk\dbb{\bt_I}$-module obtained by forgetting the $R$-linear structure on $\cH\otimes_{R\dbb{t_I}} R\dbb{\bt_I}$.
    
    To define the (T)-structure connection $\tnabla$ we fix a trivialization $\cH\simeq H\otimes_R R\dbb{t_I}$.
    This induces an isomorphism $\tcH\simeq \tH \otimes_{\bbk}\bbk\dbb{\bt_I}$, where $\tH$ denotes the $\bbk$-module obtained from $H$ by forgetting the $R$-linear structure.
    We have a map of $\bbk\dbb{t_I,u}$-algebras
    \begin{align}
        \Psi_{\blambda}\colon \End_R (H)\dbb{t_I,u}\longrightarrow \End_{\bbk} (\tH)\dbb{\bt_I,u},
    \end{align}
    given by applying the change of variable $\psi_{\blambda}$ and forgetting the $R$-linear structure. 
    Fix $(i,k)\in I\times K$, and write $\nabla_{\partial_{t_i}} = \partial_{t_i} +  u^{-1}\bA_i (t_I,u)$, with $\bA_i (t_I,u)\in \End_{R} (H)\dbb{t_I,u}$.
    We then set 
    \[\tnabla_{\partial_{t_{i,k}}}\coloneqq \partial_{t_{i,k}} + u^{-1}\lambda_k \tbA_i (\bt_I ,u) ,\]
    where $\tbA_i \coloneqq \Psi_{\blambda} (\bA_i )$.
    The chain rule and the flatness of $\nabla$ imply that $\tnabla$ is flat, producing a $\bbk$-linear (T)-structure $(\tcH ,\tnabla)$ over $\bbk\dbb{\bt_I}$.
        It is easily checked that this (T)-structure is independent of the choice of trivialization for $(\cH ,\nabla )$.
    
    We now check functoriality.
    Let $(f,\phi)\colon (\cH
    ,\nabla ) /R\dbb{t_I}\rightarrow (\cH',\nabla' ) /R\dbb{s_J}$ be a morphism of (T)-structures.
    Let $(\tcH , \tnabla )/ \bbk\dbb{\bt_I}$ and $(\tcH' ,\tnabla') / \bbk\dbb{\bs_J}$ denote the induced $\bbk$-linear (T)-structures.
    There exists a unique morphism of $\bbk$-algebras $\tf\colon \bbk\dbb{\bs_J}\rightarrow \bbk\dbb{\bt_I}$ making the following diagram of $R$-algebras commutative 
    \[ 
    \begin{tikzcd}
        R\dbb{s_J} \ar[r,"f"] \ar[d,"\psi_{\blambda}'"] & R\dbb{t_I}\ar[d,"\psi_{\blambda}"] \\
        R\dbb{\bs_J} \ar[r, "\tf\otimes_{\bbk} 1"] & R\dbb{\bt_I} .
    \end{tikzcd}  
    \]
    It is characterized by the relations $\psi_{\blambda}\circ f(s_j) = \sum_{k\in K}\lambda_k \tf (s_{j,k} )$ for all $j\in J$, and is automatically continuous.
    The morphism of $R\dbb{t_I,u}$-modules $\phi\colon \cH\rightarrow \cH'\otimes_{R\dbb{s_J,u}} R\dbb{t_I,u}$ induces a morphism of $\bbk\dbb{\bt_I,u}$-modules $\tphi\colon\tcH \rightarrow \tcH'\otimes_{\bbk\dbb{\bs_J,u}} \bbk\dbb{\bt_I,u}$ obtained by forgetting the $R$-linear structure of the map of $R\dbb{\bt_I,u}$-modules
    \begin{align*}
       \cH\otimes_{R\dbb{t_I,u}}R\dbb{\bt_I,u}&\xrightarrow{\phi\otimes 1} \cH'\otimes_{R\dbb{s_J,u}}R\dbb{t_I,u}\otimes_{R\dbb{t_I,u}}R\dbb{\bt_I,u} \\
        &\simeq (\cH' \otimes_{R\dbb{s_J,u}} R\dbb{\bs_J,u} )\otimes_{R\dbb{\bs_J,u}} R\dbb{\bt_I,u} .
    \end{align*}
    Forgetting the $R$-linear structure, the right-hand side is naturally isomorphic to $\tcH'\otimes_{\bbk\dbb{\bs_J,u}}\bbk\dbb{\bt_I,u}$.
    Fixing trivializations of the (T)-structures, we directly check that the pair $(\tf,\tphi )$ is compatible with the connections.
    We omit the check that this is compatible with composition of morphisms.
    By construction, a framing trivialization for $(\cH ,\nabla )$ induces a framing trivialization for $(\tcH ,\tnabla)$, concluding the proof.    
    \end{proof}
\begin{remark}
The functor $(\cH,\nabla)/ R\dbb{t_I}\mapsto (\tcH,\tnabla)/ \bbk\dbb{\bt_I}$ defined above for (T)-structures is analogous to the composition of inverse image functor $\psi_{\blambda}^*$ and the restriction of scalars from $R$ to $\bbk$ in the theory of $D$-modules.
\end{remark}

\begin{definition}\label{def:R-linear-lift}
    An \emph{$R$-linear lift} of a $\bbk$-linear (T)-structure $(\cH,\nabla )/\bbk\dbb{\bt_I}$ is the data of an $R$-linear (T)-structure $(\cH_R ,\nabla_R ) / R\dbb{t_I}$ and an isomorphism of $\bbk$-linear (T)-structures $\alpha \colon (\cH ,\nabla)_0\xrightarrow{\sim} (\tcH_R , \tnabla_R)$.
\end{definition}

\subsection{Equivariant F-bundles} \label{subsec:eq-F-bundle}

\begin{definition}[Equivariant F-bundle] \label{def:eq-F-bundle}
    Let $I$ and $J$ be countable sets.
    An $R$-\emph{equivariant F-bundle} over $\bbk\dbb{\bt_I}$ consists of the following data $\lbrace (\cH ,\nabla ) , (\cH_R,\nabla_R) , \alpha\rbrace$.
    \begin{enumerate}
        \item $(\cH ,\nabla )$ is a $\bbk$-linear F-bundle over $\bbk\dbb{\bt_I}$, and
        \item $\alpha\colon (\cH,\nabla)_0\xrightarrow{\sim} (\tcH_R,\tnabla_R)$ is an $R$-linear lift of the underlying (T)-structure $(\cH ,\nabla)_0$, where $\cH_R$ has finite rank as a $R\dbb{t_I,u}$-module.
    \end{enumerate} 
    
    A \emph{morphism} of equivariant F-bundles 
    \[\lbrace (\cH,\nabla ) ,(\cH_R,\nabla_R ) ,\alpha\rbrace/ \bbk\dbb{\bt_I} \xrightarrow{(\beta,\beta_R)} \lbrace (\cH',\nabla') , (\cH_R',\nabla_R') ,\alpha' \rbrace / \bbk\dbb{\bt_J}\]
    consists of
    \begin{enumerate}
        \item a morphism of $\bbk$-linear F-bundles $\beta\colon (\cH,\nabla ) \rightarrow (\cH',\nabla')$, and
        \item a morphism of $R$-linear (T)-structures $\beta_R\colon (\cH_R,\nabla_R)\rightarrow (\cH_R',\nabla_R')$,
    \end{enumerate} 
    such that the following diagram of $\bbk$-linear (T)-structures commutes
    \begin{equation}\label{cd:morphism-eq-F-bundle}
    \begin{tikzcd}
        (\cH,\nabla )_0 \ar[r, "{\beta_0}"] \ar[d, "\alpha"] & (\cH',\nabla')_0 \ar[d,"\alpha'"] \\
        {(\tcH_R ,\tnabla_R)} \ar[r,"{\tbeta_R}"] & {(\tcH_R',\tnabla_R') .}
    \end{tikzcd}
    \end{equation}
    
\end{definition}

\begin{remark}\label{remark:when-R-is-k}
    \begin{enumerate}[wide]
        \item We identify a $\bbk$-linear F-bundle $(\cH ,\nabla )$ with the $\bbk$-equivariant F-bundle $\lbrace (\cH ,\nabla ),\allowbreak (\cH,\nabla)_0, \allowbreak \id\rbrace$, where we choose $1\in \bbk$ as a $\bbk$-basis of $\bbk$.
    This defines a fully faithful functor. 
        \item  When $\dim_{\bbk} R =1$, equivariant F-bundles correspond to $\bbk$-linear F-bundles of finite dimension and parametrized by finitely many variables, up to isomorphism.
    Indeed, after choosing the basis given by $1\in R$ the change of coordinate \eqref{eq:change-of-variable-T-structure} is the identity and the formal variables $t_I$ and $\bt_I$ agree.
    Given an equivariant F-bundle $\cF = \lbrace (\cH , \nabla ) , (\cH_{R},\nabla_{R}) ,\alpha\rbrace / \bbk\dbb{t_I}$, using $\alpha$ we see that $\cH$ has finite rank over $\bbk\dbb{t_I}$ because $\cH_{R}$ does, and we can define a $u$-direction connection on $\cH_{R}$ compatible with the (T)-structure, making $\alpha$ an isomorphism of F-bundles.
    \end{enumerate}
\end{remark}

\begin{definition} \label{def:framing-eq-F-bundle}
\begin{enumerate}[wide]
    \item A \emph{framing} for an equivariant F-bundle $\lbrace (\cH ,\nabla ) , (\cH_R,\nabla_R), \alpha\rbrace$ is the data of framings for $(\cH ,\nabla )$ and $(\cH_R,\nabla_R)$, such that $\alpha\colon (\cH,\nabla)_0\rightarrow (\tcH_R,\tnabla_R)$ is compatible with the induced framings.
    \item A morphism $(\beta,\beta_R)$ of framed equivariant F-bundles is \emph{compatible with the framings} if both $\beta$ and $\beta_R$ are compatible with the framings.
\end{enumerate}
\end{definition}

\begin{remark} \label{remark:morphism-eq-F-bundle}
    A morphism of equivariant F-bundles $(\beta ,\beta_R)$ is uniquely determined by $\beta_R$ and the $R$-linear lifts through \eqref{cd:morphism-eq-F-bundle}.
    Similarly, a framing of equivariant F-bundle is uniquely determined by the framing on the $R$-linear lift. 
\end{remark}

\section{Unfolding of equivariant F-bundles}

Recall the setting of \cref{sec:eq-F-bundles-and-T-structure}, $\bbk$ is a field of characteristic $0$ and $R$ is a $\bbk$-algebra of countable dimension.

\subsection{Framing of (T)-structures}

In this subsection, we prove that an F-bundle $(\cH,\nabla )$ is characterized by the underlying (T)-structure and the restriction of the F-bundle to a point using framing of (T)-structures (see \cref{lemma:lift-T-structure-morphism-to-F-bundle}). 
We deduce a criterion for lifting a morphism of (T)-structures to a morphism of F-bundles.
We also prove the existence of framing and extension of framing results for (T)-structures over a noetherian base.

\begin{lemma}\label{lemma:lift-T-structure-morphism-to-F-bundle} 
    For $k=1,2$, let $I_k$ be a countable set and $(\cH_k ,\nabla_k) /R\dbb{t_j ,j\in I_k}$ be an F-bundle.
    Let $(f,\Phi)\colon (\cH_1,\nabla_1)_0\rightarrow (\cH_2,\nabla_2)_0$ be a morphism of (T)-structures.
    Assume that the (T)-structure $(\cH_1,\nabla_1)_0$ admits a framing.     \begin{enumerate}[wide]
        \item $\nabla_1$ is uniquely determined by the underlying (T)-structure and $\nabla_{1,\partial_u}\vert_{t_{I_1} =0}$, and any such data determine a unique F-bundle connection extending the (T)-structure.
        \item $(f,\Phi)$ is an isomorphism of F-bundles if and only if $(f,\Phi)\vert_{t_{I_1}=0}$ is an isomorphism of F-bundles. 
    \end{enumerate}
\end{lemma}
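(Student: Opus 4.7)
The plan is to work in a framing trivialization of $(\cH_1,\nabla_1)_0$, reducing everything to analyzing a linear PDE in the $t_{I_1}$-variables that can be solved order by order in Taylor expansion from the initial data at $t_{I_1}=0$. Fix such a framing so that in the corresponding trivialization $\cH_1 \simeq H_1\otimes_R R\dbb{t_{I_1},u}$ one has $\nabla_{1,\partial_{t_i}} = \partial_{t_i} + u^{-1}A_i$ with $A_i\in \End_R(H_1)\dbb{t_{I_1}}$ independent of $u$. Any F-bundle extension can then be written $\nabla_{1,\partial_u} = \partial_u + B$ with $B\in u^{-2}\End_R(H_1)\dbb{t_{I_1},u}$, and the flatness $[\nabla_{\partial_{t_i}},\nabla_{\partial_u}]=0$ becomes the PDE system
\[\partial_{t_i} B = -u^{-2}A_i - u^{-1}[A_i,B],\qquad i\in I_1.\]

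For part (1), expand $B=\sum_{\alpha\in\bbN^{(I_1)}}B_\alpha t_{I_1}^\alpha$ with $B_\alpha\in u^{-2}\End_R(H_1)\dbb{u}$. The PDE recursively determines $B_{\alpha+e_i}$ in terms of coefficients of strictly smaller degree, after dividing by $\alpha_i+1$ (valid since $\mathrm{char}(\bbk)=0$). This gives the uniqueness part directly from $B_0=B|_{t_{I_1}=0}$. For existence, the recursion is consistent as soon as $\partial_{t_j}\partial_{t_i}B=\partial_{t_i}\partial_{t_j}B$ after substituting the PDE; a direct computation reduces this to the Jacobi-type identity $[A_i,[A_j,B]]=[A_j,[A_i,B]]$, which follows from the Jacobi identity together with the (T)-structure flatness conditions $[A_i,A_j]=0$ and $\partial_{t_i}A_j=\partial_{t_j}A_i$ in a framing.

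For part (2), introduce the obstruction
\[G \coloneqq f^*\nabla_{2,\partial_u}\circ\Phi \;-\; \Phi\circ\nabla_{1,\partial_u} \;\in\; u^{-2}\Hom_{R\dbb{t_{I_1},u}}(\cH_1,f^*\cH_2),\]
whose vanishing is exactly the compatibility of $(f,\Phi)$ with the $u$-direction connection; by hypothesis $G|_{t_{I_1}=0}=0$. The Hom module carries a flat F-bundle connection induced from those on $\cH_1$ and $f^*\cH_2$. Applying the flatness $[\nabla_{\partial_{t_i}},\nabla_{\partial_u}]=0$ on this Hom module to the section $\Phi$, combined with the (T)-structure morphism condition $\nabla_{\partial_{t_i}}\Phi=0$, yields $\nabla_{\partial_{t_i}}G=0$. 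In the trivialization this reads $\partial_{t_i}G = u^{-1}(GA_i-\tilde{A}_iG)$, where $\tilde A_i$ is the connection matrix for $f^*\nabla_2$ in some trivialization. This is a linear ODE, and the same Taylor-degree induction as in part (1) forces $G=0$ from the zero initial condition. The isomorphism property of $\Phi$ follows from isomorphism at $t_{I_1}=0$ via \cref{remark:free-module-power-series}(3).

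The main obstacle is the consistency verification in the existence half of (1), where the cross-derivative computation must be unpacked carefully; the framing of $(\cH_1,\nabla_1)_0$ is essential here because it separates the (T)-structure flatness cleanly into $[A_i,A_j]=0$ and $\partial_{t_i}A_j=\partial_{t_j}A_i$, rather than the $u$-tangled combination one would obtain in a generic trivialization. Once the PDE and its consistency are set up, the remainder amounts to routine manipulation of formal power series, and the same Taylor-degree induction scheme drives both the construction of $B$ in (1) and the vanishing of $G$ in (2).
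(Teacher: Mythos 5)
Your proposal is correct and takes essentially the same route as the paper: in a framing trivialization of $(\cH_1,\nabla_1)_0$, the flatness constraint becomes a linear PDE in the $t$-variables for the $u$-direction connection, solved uniquely from the initial condition at $t_{I_1}=0$ by Taylor-degree recursion, with consistency reduced via the (T)-structure flatness identities $[A_i,A_j]=0$ and $\partial_{t_i}A_j=\partial_{t_j}A_i$ (the paper phrases this by introducing the commuting operators $D_i = \partial_{t_i}+u^{-1}\ad_{T^i}$ and checking $D_i(T^j)=D_j(T^i)$, which is your cross-derivative computation). For part (2) the paper conjugates to $\nabla_2' = \Phi^{-1}\circ f^*\nabla_2\circ\Phi$ and cites the uniqueness of (1), whereas you package the discrepancy as a flat section $G$ of the Hom bundle that vanishes at $t_{I_1}=0$ and hence vanishes; these are equivalent reformulations of the same uniqueness mechanism.
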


\begin{proof}
    For (1), fix a framing trivialization $\cH\simeq H\otimes R\dbb{t_i ,i\in I_1,u}$ of the underlying (T)-structure.
    In this trivialization, write $\nabla_{1,\partial_{t_i}} = \partial_{t_i} + u^{-1} T^i$ and $\nabla_{1,\partial_u} = \partial_u + u^{-2} U$.
    By assumption, the endomorphism $T^i$ is independent of $u$.
    The flatness equations for the $u$-direction and $t_i$-direction give for all $i\in I_1$
    \begin{align}
    \label{eq: extending U direction}
    \frac{\partial U}{\partial t_i} = -T^i + u \frac{\partial T^i}{\partial u} + u^{-1} [U,T^i] = -T^i + u^{-1} [U,T^i] .
    \end{align}
    Any $U$ solving this system of equations gives rise to an F-bundle structure extending the (T)-structure.
    Then (1) reduces to proving that for any initial condition $U_0(u)\in \End_{R\dbb{u}}(H\dbb{u})$, there exists a unique $U(t,u)$ solving \eqref{eq: extending U direction} with $U(0,u) = U_0(u)$.
    Introduce the differential operators $D_i\colon X\mapsto \frac{\partial X}{\partial t_i} + u^{-1} \ad_{T^i} (X)$, where $\ad_{T^i} = [T^i , \cdot ]$.
    Then \eqref{eq: extending U direction} can be written as $D_i (U) = -T^i$, and we need to prove that the system is compatible for any initial condition.
    
    Since $\nabla_1$ is flat, by comparing degrees in $u$, we have for all $i,j\in I_1$
    \begin{align}\label{eq:flatness-framed-(T)-structure}
        [T^j,T^i]= u \left(\frac{\partial T^j}{\partial t_i}-\frac{\partial T^i}{\partial t_j}\right)=0.
    \end{align}
    It follows that 
    \begin{align*}
        [D_i,D_j] &= [\partial_{t_i},  \partial_{t_j}] + u^{-1}\big([\frac{\partial}{\partial t_i} , \ad_{T^j}] + [\ad_{T^i}, \frac{\partial }{\partial t_j} ]\big) + u^{-2}[\ad_{T^i} , \ad_{T^j}] \\
        &= u^{-1} (\ad_{\partial_{t_i} T^j}-\ad_{\partial_{t_j} T^i}) + u^{-2} \ad_{[T^i,T^j]} = 0.
    \end{align*}
    Hence, by the usual theory of linear system of ODEs, the system is compatible if and only if for all $i,j\in I_1$, we have $D_i (T^j)  =D_j (T^i)$.
    This follows from the flatness equations \eqref{eq:flatness-framed-(T)-structure}.
    We can thus construct a unique solution inductively on the number of variables from any initial condition.
    If $I_1$ is finite, we obtain a solution in finitely many steps. 
    If $I_1\simeq \bbN$ is infinite, we construct a solution in the projective limit ${\varprojlim} \End_R (H)\dbb{t_1,\dots, t_n,u} = \End_R (H)\dbb{t_i,i\in I_1,u}\simeq \End_{R\dbb{t_i,i\in I_1,u}} (H\otimes R\dbb{t_i, i\in I_1,u} )$.
    (1) is proved.
    
    For (2), the first direction is obvious. 
    For the converse, if $\Phi\vert_{t_{I_1} = 0}$ is an isomorphism, then the $R\dbb{t_{I_1}}$-module map $\Phi$ is an isomorphism (see \cref{remark:free-module-power-series}).
    The connection $\nabla_2' \coloneqq \Phi^{-1}\circ f^{\ast}\nabla_2\circ\Phi$ defines an F-bundle structure on $\cH$.
    By assumption, the underlying (T)-structure agrees with $(\cH ,\nabla )_0$ and $\nabla_{1,\partial_u}\vert_{t_{I_1}=0} =\nabla_{2,\partial_u}'\vert_{t_{I_1}=0}$.
    It follows from the uniqueness in (1) that $\nabla_2' = \nabla_1$, hence $(f,\Phi)$ is a morphism of F-bundles.    
\end{proof}

For (T)-structures defined over a Noetherian base $R\dbb{t_1,\dots , t_n}$, results from \cite[\S4.1]{HYZZ_decomposition_and_framing_of_F_bundles} imply the existence of framing trivializations. 

\begin{proposition} \label{lemma:weak-framing-t-directions}
    Let $(\cH ,\nabla )/R\dbb{t_1,\dots, t_n}$ be an $R$-linear (T)-structure.
    Any trivialization of $\cH\vert_{t=0}$ extends uniquely to a framing of $(\cH,\nabla )$.
\end{proposition}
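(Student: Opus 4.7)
The plan is to reformulate the existence and uniqueness of a framing extension as a gauge-transformation problem and then solve the resulting system of PDEs order by order in the $t$-adic filtration. First I would arbitrarily lift the given trivialization of $\cH|_{t=0}$ to a trivialization $\cH \simeq H \otimes_R R\dbb{t_1, \ldots, t_n, u}$; any $R\dbb{u}$-basis of $\cH|_{t=0}$ extends to one of $\cH$ by taking constant-in-$t$ lifts of the change-of-basis matrices, whose determinants remain units. In this trivialization write $\nabla_{\partial_{t_i}} = \partial_{t_i} + u^{-1} A_i(t, u)$ with $A_i \in \End_R(H)\dbb{t, u}$. Finding a framing that extends the given trivialization at $t = 0$ then amounts to finding a pair $(G, B_i)$ consisting of an automorphism $G \in \End_R(H)\dbb{t, u}$ with $G(0, u) = \id$ and $u$-independent endomorphisms $B_i(t) \in \End_R(H)\dbb{t}$, satisfying
\[ u \partial_{t_i} G + A_i G = G B_i \quad \text{for } i = 1, \ldots, n. \]

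For uniqueness, given two solutions $(G, B_i)$ and $(G', B_i')$, I would set $K := G'^{-1} G$, yielding $K(0, u) = \id$ and $u \partial_{t_i} K = K B_i - B_i' K$. I would then show by induction that $K \equiv \id$ and $B_i \equiv B_i'$ modulo $(t)^{n}$ for all $n$, writing $e_i$ for the $i$-th unit multi-index. Given the inductive hypothesis at level $n$, for any $|\alpha| = n$ and $j$ with $\alpha_j > 0$, the equation at $t^{\alpha - e_j}$ collapses under the IH to $u \alpha_j K_\alpha = B_{j, \alpha - e_j} - B_{j, \alpha - e_j}' = 0$, forcing $K_\alpha = 0$; then the equation at $t^\alpha$ collapses to $u(\alpha_i + 1) K_{\alpha + e_i} = B_{i, \alpha} - B_{i, \alpha}'$, and the $u$-valuation mismatch on the two sides forces $B_{i, \alpha} = B_{i, \alpha}'$ and $K_{\alpha + e_i} = 0$. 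The base case uses $K(0, u) = \id$ and the equation at $t^0$.

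For existence, I would solve the system recursively in the $t$-adic filtration: at multidegree $|\alpha| = n$, expanding the equation at $t^\alpha$ in powers of $u$ and using $G_0 = \id$ determines $B_{i, \alpha}$ in closed form from the $u^0$-component and the coefficient of $u^m$ in $G_{\alpha + e_i}$ from the $u^{m+1}$-component. The hard part will be the compatibility of the recursion: a coefficient $G_\beta$ can be obtained from the equation at $t^{\beta - e_i}$ for any $i$ with $\beta_i > 0$, and the different determinations must agree. This is the Frobenius integrability of the multidirectional system, which reduces to the identities $[A_i, A_j] = 0$ and $\partial_{t_i} A_j = \partial_{t_j} A_i$, obtained by comparing powers of $u$ in the flatness relation $[\nabla_{\partial_{t_i}}, \nabla_{\partial_{t_j}}] = 0$. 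This compatibility check, together with the inductive bookkeeping for the $G_{\alpha + e_i}^{(m)}$, is essentially the framing construction of \cite[\S4.1]{HYZZ_decomposition_and_framing_of_F_bundles}, which I would invoke.
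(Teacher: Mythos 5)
Your proof takes essentially the same route as the paper: reformulate the extension of the trivialization to a framing as a gauge-transformation problem, determine the gauge matrix recursively in the $t$-adic filtration, and outsource the integrability of the recursion to \cite[\S 4.1]{HYZZ_decomposition_and_framing_of_F_bundles}. Your uniqueness argument is a detailed unwinding of the paper's remark that the recursion determines $P$ uniquely, and both proofs cite the same HYZZ lemmas for existence. The only structural difference is cosmetic: the paper eliminates the unknowns $B_i$ from the start by substituting $B_i = P_0^{-1} T^i_{-1} P_0$ (the conjugate by $P_0 = P(t,0)$ of the $u = 0$ part of the connection matrix), so the PDE system is closed in $P$ alone; you instead carry the $B_i$ as unknowns and determine $B_{i,\alpha}$ from the $u^0$-component at each step. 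These are equivalent.

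One inaccuracy worth correcting: you assert that the Frobenius integrability ``reduces to the identities $[A_i, A_j] = 0$ and $\partial_{t_i} A_j = \partial_{t_j} A_i$, obtained by comparing powers of $u$.'' That split holds only once $A_i$ is independent of $u$, i.e.\ only \emph{after} a framing has been found. In the general starting trivialization the flatness relation $[\nabla_{\partial_{t_i}}, \nabla_{\partial_{t_j}}] = 0$ gives the single coupled identity $u\,(\partial_{t_i} A_j - \partial_{t_j} A_i) + [A_i, A_j] = 0$, whose coefficient of $u^m$ mixes $\partial_{t_i} A_{j,m-1} - \partial_{t_j} A_{i,m-1}$ with $\sum_{k+l=m} [A_{i,k}, A_{j,l}]$; only the $u^0$ piece $[A_{i,0}, A_{j,0}] = 0$ decouples. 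This misstatement does not cost you the proof, since you delegate the actual compatibility check to the cited HYZZ lemmas, but as written it would mislead a reader into thinking the integrability is simpler than it is.
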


\begin{proof}
    Fix a trivialization $\cH\simeq H\otimes R\dbb{t_1,\dots, t_n,u}$ lifting the trivialization of $\cH /(t_1,\dots, t_n)\cH$.
    Write the connection as $\nabla_{\partial_{t_i}} = \partial_{t_i} + u^{-1} T^i (t,u)$.
    We want to show that there exists a unique gauge transformation $P(t,u)\in \GL (H\dbb{t_1,\dots , t_n,u} )$ with $P(0,u) = \id$ such that $uP^{-1}\frac{\partial P}{\partial t_i} + P^{-1} T^i P$ is independent of $u$ for all $1\leq i\leq n$.
    This amounts to solving the system of PDEs ($1\leq i\leq n$)
    \[\frac{\partial P}{\partial t_i} = u^{-1} (-T^i P + PP_0^{-1} T_{-1}^iP_0 ),\]
    where $P_0 = P(t,0)$ and $T_{-1}^i = T^i (t,0)$, with the initial condition $P(0,u) = \id$.
    Uniqueness is clear, as the system provides recursive relations for the coefficients of $P$, and existence follows from \cite[Lemmas 4.17, 4.18, 4.20]{HYZZ_decomposition_and_framing_of_F_bundles}.
    The arguments there still apply, because we assume that $R$ contains $\bbQ$.
\end{proof}

Fix $I$ a finite set, let $(\cH,\nabla )/ R\dbb{t_I}$ be a (T)-structure of finite rank $n\in\bbN$. 
Let $v_1\in \cH / (t_I,u)\cH$.
Any choice $(h_1,\dots, h_n)$ of $R\dbb{t_I,u}$-basis for $\cH$ provides a trivialization through the isomorphisms 
\[\cH\simeq \bigoplus_{1\leq i\leq n} R\dbb{t_I,u}h_i \simeq R^{\oplus n}\otimes_R R\dbb{t_I,u} .\]
We call a basis $(h_1,\dots, h_n)$ \emph{good for $(\cH ,\nabla)$} if it induces a framing trivialization.
We say that it \emph{extends $v_1$} if $h_1$ is a lift of $v_1$.
\cref{lemma:weak-framing-t-directions} implies that any basis of $\cH / (t_I,u)\cH$ lifts uniquely to a good basis of $(\cH,\nabla)$.
More generally, we have the following.

\begin{lemma} \label{corollary: bundle map is uniquely determined on a point}
    Let $I$ and $J$ be finite sets.
    Let $(f,\Phi)\colon (\cH , \nabla ) / R\dbb{t_I}\rightarrow (\cH',\nabla') /R\dbb{t_J}$ be a morphism of finite rank (T)-structures. Assume that $\Phi\vert_{t_I = 0}$ is an isomorphism.
    \begin{enumerate}[wide]
        \item Any good basis $(h_1,\dots , h_n)$ of $(\cH,\nabla)$ induces a unique good basis $(h_1',\dots, h_n')$ of $(\cH',\nabla')$ such that $\Phi (h_k) = f^{\ast} (h_k')$ for all $1\leq k\leq n$.
        \item $\Phi$ is uniquely determined by its restriction to $\cH\vert_{t_I=0}$.
    \end{enumerate}
\end{lemma}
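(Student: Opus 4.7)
The plan is to deduce both parts from \cref{lemma:weak-framing-t-directions} (the unique extension of a framing from a trivialization at $t=0$), applied once to $(\cH',\nabla')$ and once to the pullback $f^*(\cH',\nabla')$, combined with the observation that the pull-back of a framing by base change is again a framing (by the chain rule the pulled-back connection matrix remains $u$-independent).

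For part (1), the first step is to promote $\Phi$ from a pointwise isomorphism to a global isomorphism of $R\dbb{t_I,u}$-modules, by an argument in the spirit of \cref{remark:free-module-power-series}(3) adapted to finite-rank free modules over $R\dbb{t_I,u}$. Since $\Phi$ is then an isomorphism of (T)-structures $(\cH,\nabla) \xrightarrow{\sim} f^*(\cH',\nabla')$, the basis $(\Phi(h_k))$ inherits the framing property from $(h_k)$ and is therefore a good basis of $f^*(\cH',\nabla')$. Next, I would choose a basis of $\cH'|_{t_J=0}$ matching $(\Phi(h_k)|_{t_I=0})$ under the canonical identification $f^*\cH'|_{t_I=0} \cong \cH'|_{t_J=0}$, and invoke \cref{lemma:weak-framing-t-directions} to extend it uniquely to a good basis $(h_k')$ of $(\cH',\nabla')$. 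The pullback $(f^*(h_k'))$ is then a good basis of $f^*(\cH',\nabla')$ whose restriction at $t_I=0$ matches that of $(\Phi(h_k))$ by construction, so uniqueness in \cref{lemma:weak-framing-t-directions} forces $f^*(h_k') = \Phi(h_k)$. Uniqueness of $(h_k')$ under the prescribed pullback condition follows from the same uniqueness statement.

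For part (2), given $\Phi_1,\Phi_2$ sharing the same $f$ and restricting to the same map at $t_I=0$, one applies part (1) to each with a common good basis $(h_k)$ of $(\cH,\nabla)$, producing good bases $(h_k^{(1)})$ and $(h_k^{(2)})$ of $(\cH',\nabla')$ with $\Phi_j(h_k) = f^*(h_k^{(j)})$. Their restrictions at $t_J=0$ coincide because $\Phi_1(h_k)|_{t_I=0} = \Phi_2(h_k)|_{t_I=0}$ transports, via the fiber identification, to equality at $t_J=0$. Uniqueness in \cref{lemma:weak-framing-t-directions} then gives $h_k^{(1)} = h_k^{(2)}$, and thus $\Phi_1 = \Phi_2$ on the basis $(h_k)$, hence everywhere.

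The main subtle point will be justifying the canonical identification $f^*\cH'|_{t_I=0} \cong \cH'|_{t_J=0}$. For finite $I,J$ the topology on $R\dbb{t_I}$ agrees with the $(t_I)$-adic topology, so continuity of the $R$-algebra map $f$ forces each $f(t_j)|_{t_I=0}$ to be nilpotent in $R$; the augmentation ideals are thereby sent into each other and the fibers at the origins can be canonically identified, possibly after a harmless translation in $\cH'$. Once this point is settled, both parts reduce to routine applications of \cref{lemma:weak-framing-t-directions}.
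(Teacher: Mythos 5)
Your proof is correct and takes essentially the same route as the paper: both rest on \cref{lemma:weak-framing-t-directions} (unique extension of a framing from a trivialization at the origin), applied after promoting $\Phi$ to a global module isomorphism via \cref{remark:free-module-power-series}(3) and identifying the fibers $f^{\ast}\cH'\vert_{t_I=0}\simeq\cH'\vert_{t_J=0}$. Your write-up is a little more explicit than the paper's in two places — you actually verify that the constructed $(h_k')$ satisfies $\Phi(h_k)=f^{\ast}(h_k')$ globally (by noting that $\Phi(h_k)$ and $f^{\ast}(h_k')$ are both good bases of $f^{\ast}(\cH',\nabla')$ agreeing at $t_I=0$), and you flag that the fiber identification really needs $f(t_j)\vert_{t_I=0}=0$, which you derive from continuity modulo nilpotents in $R$ — but these are refinements of the same argument, not a different one, and in all of the paper's uses $R$ is reduced and $f$ is a coordinate inclusion or identification, so the nilpotency caveat is harmless.
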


\begin{proof}
    The assumptions imply that $\Phi$ is an isomorphism of $R\dbb{t_I,u}$-modules.
    In particular, we have isomorphisms of $R\dbb{u}$-modules 
    \begin{align}\label{eq:iso-at-t=0}
        \cH/ (t_I)\cH\simeq f^{\ast}\cH' / (t_I)f^{\ast}\cH'\simeq \cH'/(t_J)\cH' .
    \end{align}
    A good basis $(h_1',\dots, h_n')$ for $(\cH',\nabla')$ is uniquely characterized by its projection to $\cH'/(t_J)\cH'$.
    This value is uniquely specified by the condition $\Phi (h_k ) = f^{\ast} (h_k')$ using the isomorphism \eqref{eq:iso-at-t=0}, which proves (1).

    For (2), we note that $\Phi$ is uniquely determined by the image of a good basis $(h_1,\dots , h_n)$ of $(\cH,\nabla )$.
    By (1), the image $(\Phi (h_1),\dots, \Phi (h_n))$ is a good basis for $f^{\ast} (\cH',\nabla')$.
    In particular, it is uniquely determined by its restriction to $t_I=0$, which only depends on $\Phi\vert_{t_I=0}$.
    The proof is complete.  
    \end{proof}

\subsection{Formal Hertling-Manin unfolding theorem}

In this subsection, we prove an analogue of the Hertling-Manin unfolding theorem for (TE)-structures (see \cite[Theorem 2.5]{Hertling_Unfoldings_of_meromorphic_connections}) for formal $R$-linear F-bundles and (T)-structures.

\begin{definition}[Unfolding of (T)-structure, F-bundle]
    Let $R$ be a $\bbk$-algebra, $I$ and $J$ countable sets. 
    Let $(\cH ,\nabla ) / R\dbb{t_I}$ be an $R$-linear (T)-structure (resp.\ F-bundle).
    An \emph{unfolding} of $(\cH ,\nabla )$ is a morphism of (T)-structures (resp.\ F-bundles) $(i,\phi)\colon (\cH ,\nabla ) /R\dbb{t_I}\rightarrow (\cH',\nabla' ) /R\dbb{t_J}$, where
    \begin{enumerate}
        \item $I\subset J$ and $i\colon R\dbb{t_J}\rightarrow R\dbb{t_I}$ is the quotient by the closure of the ideal $(t_j ,j \in J\setminus I)$, and
        \item $\phi\colon \cH\rightarrow i^{\ast}\cH'$ is an isomorphism of $R\dbb{t_I,u}$-modules.
    \end{enumerate}
    
    A \emph{morphism between two unfoldings} $\iota_k\colon (\cH ,\nabla)\rightarrow (\cH_k ,\nabla_k)$ for $k=1,2$, is a morphism of (T)-structures (resp. F-bundles) $(f,\psi)\colon (\cH_2,\nabla_2)\rightarrow (\cH_1,\nabla_1)$ such that $\psi$ is an isomorphism and the following diagram commutes
    \begin{equation*} \label{cd:induced-unfolding-F-bundle}
    \begin{tikzcd}
    & (\cH ,\nabla ) \ar[dl, "{\iota_2}" above left] \ar[dr, "{\iota_1}"] & \\
   (\cH_2,\nabla_2 ) \ar[rr,"{(f,\psi)}"] & & (\cH_1,\nabla_1 ).
    \end{tikzcd}
    \end{equation*}
    \end{definition}

\begin{remark}\label{remark:uniqueness-bundle-map-unfolding}
In the above commutative diagram, assume that $(\cH_k ,\nabla_k)$ depends on finitely many variables indexed by a finite set $J_k$ for $k=1,2$, and write 
\[\iota_k  =(i_k,\phi_k): (\cH,\nabla)/R\dbb{t_I}\rightarrow (\cH_2,\nabla_2)/R\dbb{t_{J_k}}.\] 
Then for any two morphisms $(f,\psi_k)$, $k=1,2$, between the unfoldings $\iota_2$ and $\iota_1$, we have $\psi_1=\psi_2$. 
In other words, the morphism on the base $f$ determines the bundle map.
Indeed, the commutativity of the diagram implies that $i_2^* \psi_k \circ \phi_2 = \phi_1.$ This determines $\psi_k|_{t_{J_2}=0} = \phi_1\circ \phi_2^{-1}|_{t_{J_2}=0}$. By \cref{corollary: bundle map is uniquely determined on a point}, $\psi_k$ is uniquely determined by $\psi_k|_{t_{J_2} = 0}$, thus $\psi_1=\psi_2$.
\end{remark}

\begin{remark} \label{remark:unfolding-framing} 
When $I$ and $J$ are finite, given an unfolding of $R$-linear (T)-structures
\[(i,\phi)\colon (\cH ,\nabla ) /R\dbb{t_I}\longrightarrow (\cH',\nabla' ) /R\dbb{t_J},\] 
any framing for $(\cH,\nabla)/R\dbb{t_I}$ induces a unique framing for $(\cH',\nabla')/R\dbb{t_J}$, and vice versa. Indeed, $\phi$ takes the framing trivialization for $(\cH,\nabla)$ to a framing trivialization for $i^* (\cH',\nabla')$, which is uniquely determined by its restriction to the fiber $i^*\cH'|_{t_I=0} = \cH'|_{t_J=0}$. We can extend this to a framing trivialization for $(\cH',\nabla')$ by \cref{lemma:weak-framing-t-directions}.
\end{remark}

\begin{lemma}\label{lemma:extend u-direction from T-structure}
For $k=1,2$, let $I_k$ be countable sets, and let \[(f,\Phi)\colon (\cH_1,\nabla_1)/R\dbb{t_1}\rightarrow (\cH_2,\nabla_2)/R\dbb{t_2}\] be an unfolding of $R$-linear (T)-structures.
Assume the (T)-structure $(\cH_2,\nabla_2)$ admits a framing.
Given an F-bundle structure $(\cH_1,\nabla_1^F)$ on $(\cH_1,\nabla_1)$, there exists a unique F-bundle structure $(\cH_2,\nabla_2^F)$ on $(\cH_2,\nabla_2)$ such that $(f,\Phi)$ is an unfolding of F-bundles.
\end{lemma}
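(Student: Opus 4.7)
The strategy is to reduce the problem to the central fibre using Lemma 3.1. The key observation is that the hypotheses of Lemma 3.1(1) are exactly what we need: an F-bundle structure on a (T)-structure admitting a framing is determined by, and can be freely prescribed through, the value $\nabla_{\partial_u}|_{t=0}$ of its $u$-direction connection at the origin of the base.

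First, I would construct the candidate value of $\nabla_{2,\partial_u}^F$ at the central fibre. Since $(f,\Phi)$ is an unfolding, $\Phi|_{t_{I_1}=0}$ identifies the $R\dbb{u}$-modules $\cH_1|_{t_{I_1}=0}$ and $\cH_2|_{t_{I_2}=0}$, compatibly with the underlying (T)-structures. I define $\nabla_{2,\partial_u}^F|_{t_{I_2}=0}$ on $\cH_2|_{t_{I_2}=0}$ as the transport of $\nabla_{1,\partial_u}^F|_{t_{I_1}=0}$ under this identification.

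Since $(\cH_2,\nabla_2)$ is assumed to admit a framing, Lemma 3.1(1) then produces a unique F-bundle structure $\nabla_2^F$ on $\cH_2$ extending $\nabla_2$ and having the prescribed value at $t_{I_2}=0$. It remains to verify that $(f,\Phi)$ is then a morphism of F-bundles, i.e.\ $\Phi\circ \nabla_{1,\partial_u}^F = f^{\ast}\nabla_{2,\partial_u}^F\circ \Phi$. Pulling back a framing of $(\cH_2,\nabla_2)$ along $f$ produces a framing of $f^{\ast}(\cH_2,\nabla_2)$, so Lemma 3.1(1) applies to the F-bundle $f^{\ast}\cH_2$ as well. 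Both $\Phi\circ \nabla_{1,\partial_u}^F\circ \Phi^{-1}$ and $f^{\ast}\nabla_{2,\partial_u}^F$ are $u$-direction extensions of the (T)-structure $f^{\ast}\nabla_2$, and by construction they agree at $t_{I_1}=0$; uniqueness in Lemma 3.1(1) forces them to coincide.

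Uniqueness of $\nabla_2^F$ follows by the same principle: any other F-bundle structure $\nabla_2^{F'}$ on $\cH_2$ extending $\nabla_2$ for which $(f,\Phi)$ is a morphism of F-bundles must satisfy $f^{\ast}\nabla_{2,\partial_u}^{F'}\circ \Phi = \Phi\circ \nabla_{1,\partial_u}^F$, hence $\nabla_{2,\partial_u}^{F'}|_{t_{I_2}=0}$ is determined by the data on $\cH_1$ and coincides with $\nabla_{2,\partial_u}^{F}|_{t_{I_2}=0}$; applying Lemma 3.1(1) on $\cH_2$ yields $\nabla_2^{F'}=\nabla_2^F$. I do not anticipate any serious obstacle; the content of the lemma is a direct application of Lemma 3.1, once one notices that a framing on $\cH_2$ pulls back to a framing on $f^{\ast}\cH_2$.
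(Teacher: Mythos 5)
Your proof is correct and mirrors the paper's argument essentially step for step: transporting $\nabla_{1,\partial_u}^F|_{t_{I_1}=0}$ to the central fibre of $\cH_2$ via the unfolding identification, invoking \cref{lemma:lift-T-structure-morphism-to-F-bundle}(1) to extend uniquely, verifying the morphism property by comparing $f^*\nabla_2^F$ and $\Phi\circ\nabla_1^F\circ\Phi^{-1}$ via the pulled-back framing, and deducing uniqueness from the determination of $\nabla_2^F$ by its restriction to $t_{I_2}=0$. Nothing is missing.
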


\begin{proof}
    Since $(f,\Phi)$ is an unfolding of (T)-structures, we have isomorphisms of $R\dbb{u}$-modules
    \begin{equation} \label{eq:iso-fibers-unfolding}
        \cH_1\vert_{t_1=0}\simeq f^{\ast}\cH_2\vert_{t_1=0}\simeq \cH_2\vert_{t_2=0} .
    \end{equation}
    Under this isomorphism, the restriction $\nabla_1^F\vert_{t_1=0}$ produces a F-bundle connection on $\cH_2\vert_{t_2=0}$.
    Since the latter admits a framing, applying \cref{lemma:lift-T-structure-morphism-to-F-bundle}(1) we obtain a unique F-bundle $(\cH_2,\nabla_2^F)$ extending the (T)-structure $(\cH_2,\nabla_2 )$.
    We now check that $(f,\Phi)$ is a morphism of F-bundles.
    By construction, the connections $f^{\ast}\nabla_2^F$ and $\Phi\circ\nabla_1^F\circ\Phi^{-1}$ are F-bundle connections on $f^{\ast}\nabla_2$ which coincide at $t_1=0$, and with the same underlying (T)-structures.
    The framing for $(\cH_2,\nabla_2)$ induces a framing on $f^{\ast} (\cH_2,\nabla_2)$, as can be seen by fixing a framing trivialization of $(\cH_2,\nabla_2)$ and pulling it back under $f$.
    Then, it follows from \cref{lemma:lift-T-structure-morphism-to-F-bundle}(1) that those two F-bundle structures agree. 
    Hence, $(f ,\Phi)$ is a morphism of F-bundles. 

    For uniqueness, note that the F-bundle connection $\nabla_2^F$ is uniquely determined by its restriction to $t_2=0$ since $(\cH_2,\nabla_2)$ admits a framing, and that $\nabla_2^F\vert_{t_2=0}$ is uniquely specified by $\nabla_1^F\vert_{t_1=0}$ through the isomorphisms \eqref{eq:iso-fibers-unfolding}.
\end{proof}

For an $R$-linear (T)-structure $(\cH ,\nabla ) / R\dbb{t_I}$, there is a morphism of $R$-modules \cite[Remark 2.3]{HYZZ_decomposition_and_framing_of_F_bundles}
\begin{align}\label{eq:map-to-endomorphism-fiber}
    \mu\colon \bigoplus_{i\in I} R\partial_{t_i} &\longrightarrow \End_R (H), \\
    \partial_{t_i} &\longmapsto \nabla_{u\partial_{t_i}}\big\vert_{u=0, t_I=0}, \nonumber
\end{align}
where $H \coloneqq \cH / J\cH$ with $J$ the closure of the ideal $(t_I,u)$.
For each $v\in H$ we obtain an evaluation map of $R$-modules:
\begin{align} \label{eq:evaluation-map}
    \mu_v\colon \bigoplus_{i\in I} R\partial_{t_i} &\longrightarrow H, \\
     \xi &\longmapsto \mu (\xi) (v) . \nonumber
\end{align}
Furthermore, if $(\cH,\nabla )$ is an F-bundle, we also have a residue endomorphism in the $u$-direction $\mbox{K}\coloneqq [u^2\nabla_{\partial_u}]\vert_{u=t=0}\in\End_R (H)$.
We introduce the notion of maximal (T)-structure and maximal F-bundle, analogous to \cite[Definition 2.6]{HYZZ_decomposition_and_framing_of_F_bundles}.

\begin{definition}[Maximal (T)-structure, maximal F-bundle] \label{def:maximal-F-bundle}
    Let $R$ be a $\bbk$-algebra, $I$ a countable set, and $J\subset R\dbb{t_I,u}$ the closure of the ideal $(t_I,u)$.
    An $R$-linear (T)-structure, or F-bundle, $(\cH,\nabla ) /R\dbb{t_I}$ is \emph{maximal} if there exists $v\in \cH /J\cH$ such that the map $\mu_v$ is an isomorphism.
    We call such a $v$ a \emph{cyclic vector}.
\end{definition}

The Hertling-Manin unfolding theorem guarantees the existence and uniqueness of a maximal unfolding under certain conditions, which we introduce in the next definition.

\begin{definition} \label{def:IC-GC-conditions}
    Let $I$ be a countable set, $(\cH,\nabla ) / R\dbb{t_I}$ an $R$-linear (T)-structure (resp.\ F-bundle), and $J\subset R\dbb{t_I,u}$ the closure of the ideal $(t_I,u)$.
    We define the following conditions on an element $v\in H\coloneqq \cH / J\cH$:
    \begin{itemize}
        \item[(IC)] The map $\mu_v$ in \eqref{eq:evaluation-map} is injective. 
        \item[(GC)] The orbit of $v$ under the action of the subalgebra $R[\im \mu]\subset \End_R (H)$ (resp.\ $R[\im\mu , \mbox{K}]\subset \End_R (H)$) defined by evaluation on $v$ is $H$.
        \item[(GC')] The condition (GC) is satisfied after base change to $\Frac (R)$. 
    \end{itemize}
    If $v$ satisfies (GC), we say that $v$ is a \emph{generating vector} for $(\cH,\nabla)$.
\end{definition}

The following lemma provides a construction of unfoldings under the (GC) condition. 
It is analogous to \cite[Lemma 2.9]{Hertling_Unfoldings_of_meromorphic_connections}, except that we use framings of (T)-structures to avoid the analytic argument used there.

\begin{lemma} \label{lemma:existence-uniqueness-unfoldings}
    Let $(\cH^{(0)} ,\nabla^{(0)} )/R\dbb{t_1,\dots, t_d}$ be an F-bundle of rank $n$ satisfying the (GC) condition, 
    let $v_1\in\cH^{(0)}/(t_I,u)\cH^{(0)}$ be a generating vector.
    Let $(h_1^{(0)},\dots, h_n^{(0)})$ be a good basis of $(\cH^{(0)} ,\nabla^{(0)})$ extending $v_1$.
    Fix $\ell\geq 1$ and let $f_1,\dots, f_n\in R\dbb{t_1,\dots, t_d,s_1,\dots , s_{\ell}}$ whose restrictions to $s=0$ are $0$.

    Then there exists an unfolding $\iota\colon(\cH^{(0)} ,\nabla^{(0)} ) /R\dbb{t_1,\dots, t_d}\rightarrow (\cH ,\nabla ) / R\dbb{t_1,\dots, t_d,s_1,\dots, s_{\ell}}$ such that, if $(h_1,\dots, h_n)$ denotes the good basis of $(\cH,\nabla)$ induced from $(h_1^{(0)} ,\dots , h_n^{(0)})$ (see Lemma \ref{corollary: bundle map is uniquely determined on a point}), we have for $1\leq j\leq \ell$
    \begin{equation}\label{eq:construction-unfolding}
        [u\nabla_{\partial_{s_j}}]\vert_{u=0} (h_1\vert_{u=0}) = \sum_{i=1}^n \frac{\partial f_i}{\partial s_j} h_i\vert_{u=0} . 
    \end{equation}
    Any two unfoldings satisfying \eqref{eq:construction-unfolding} are isomorphic under a morphism $(\id ,\psi)$, where $\psi$ identifies the canonical extensions of the good basis $(h_i^{(0)})_{1\leq i\leq n}$.
\end{lemma}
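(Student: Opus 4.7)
The statement is the formal analog of \cite[Lemma 2.9]{Hertling_Unfoldings_of_meromorphic_connections}. My plan is to exploit \cref{lemma:extend u-direction from T-structure}: since a framed (T)-structure admits a unique F-bundle extension once the $u$-direction is prescribed at one point, it suffices to construct the underlying (T)-structure unfolding in a framing that extends the given good basis. I would take $\cH := \cH^{(0)} \otimes_{R\dbb{t,u}} R\dbb{t,s,u}$ with induced basis $(h_1, \dots, h_n)$ and seek connection matrices
\[
\nabla_{\partial_{t_i}} = \partial_{t_i} + u^{-1} T^i(t,s), \qquad \nabla_{\partial_{s_j}} = \partial_{s_j} + u^{-1} S^j(t,s),
\]
with $T^i, S^j \in \End_R(H)\dbb{t, s}$, subject to $T^i\vert_{s=0}$ being the connection matrix of $\nabla^{(0)}_{\partial_{t_i}}$ in the given good basis and $S^j \cdot h_1 = \sum_i (\partial_{s_j} f_i)\, h_i$. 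In the framing, flatness reduces to the Frobenius-type system $[T^i, T^k] = [S^j, S^{j'}] = [T^i, S^j] = 0$ together with $\partial_{t_k} T^i = \partial_{t_i} T^k$, $\partial_{s_{j'}} S^j = \partial_{s_j} S^{j'}$, and the cross identity $\partial_{t_i} S^j = \partial_{s_j} T^i$.

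The (GC) hypothesis launches the construction at the central point. Flatness of $\nabla^{(0)}$ together with the residue of $\nabla^{(0)}_{\partial_u}$ forces $[T^i_{(0)}(0), K] = 0$, so $R[\im \mu, K] \subseteq \End_R(H)$ is a commutative subalgebra; by cyclicity of $v_1$ the evaluation map $R[\im \mu, K] \to H$, $Q \mapsto Q(v_1)$, is an isomorphism of $R$-modules. Consequently, the prescribed value of $S^j$ on $v_1$ determines a unique operator $S^j\vert_{t=s=0} \in R[\im \mu, K] \subseteq \End_R(H)$, which automatically commutes with all $T^i_{(0)}(0)$ and with $K$. I would then propagate $T^i, S^j$ to all orders in $(t, s)$ by induction, using the cross identity $\partial_{t_i} S^j = \partial_{s_j} T^i$ to recover the next-order $s$-corrections of $T^i$ from known data on $S^j$, and the commutation identities together with the remaining symmetry relations to propagate $S^j$ itself. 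Once the framed (T)-structure is in place, \cref{lemma:extend u-direction from T-structure} yields a unique F-bundle extension restricting to $(\cH^{(0)}, \nabla^{(0)})$ at $s = 0$.

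For uniqueness, any second unfolding $(\cH', \nabla')$ satisfying \eqref{eq:construction-unfolding} admits, by \cref{corollary: bundle map is uniquely determined on a point}, a canonical good basis $(h_1', \dots, h_n')$ extending $(h_k^{(0)})$; its connection matrices solve the same Frobenius system with the same initial data, hence coincide with $T^i, S^j$ by the uniqueness of the inductive construction. The $R\dbb{t,s,u}$-linear map $\psi\colon h_k \mapsto h_k'$ is then an isomorphism of (T)-structures which is the identity at $s=0$, and by \cref{lemma:lift-T-structure-morphism-to-F-bundle}(2) lifts uniquely to an isomorphism of F-bundles. The main obstacle I anticipate is verifying that the inductive recursion for $T^i, S^j$ closes consistently: the flatness identities must continue to hold at every order in $s$, and the (GC) identification of commuting operators with elements of $H$ must be enough to pin down a unique solution at each step. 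This is the formal counterpart of the Cauchy-Kovalevskaya step in the analytic proof of Hertling-Manin, and I expect it to work because all base-order data are compatible by construction and the recursion amounts to linear algebra on the free $R\dbb{t,s}$-module $H\dbb{t,s}$.
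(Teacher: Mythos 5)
Your high-level plan — build the $(\mathrm{T})$-structure unfolding in a framing, then appeal to \cref{lemma:extend u-direction from T-structure} to get a unique $u$-direction — does not actually close, because the inductive construction of the matrices $S^j$ cannot be carried out inside the $(\mathrm{T})$-structure alone. The hypothesis in the lemma is the \emph{F-bundle} version of (GC), namely surjectivity of the evaluation of $R[\im\mu, \mathrm{K}]$ on $v_1$; this is weaker than the $(\mathrm{T})$-structure version $R[\im\mu]\cdot v_1 = H$. Consequently, requiring $S^j$ to commute only with the $T^i$'s (the $(\mathrm{T})$-structure flatness you list) together with the prescribed action on $h_1$ does not pin down $S^j$. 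To get uniqueness you also need the flatness constraint $[S^j,\, U_{-2}] = 0$ with the $u$-direction residue matrix $U_{-2}(t,s)$, and for $s\neq 0$ this matrix is part of what is being constructed, not data you already have. So constructing $S^j$ requires $U_{-2}$ up to the current $s$-order, which in turn is produced from lower-order $S$; the construction is intrinsically intertwined and cannot be split into ``$(\mathrm{T})$-structure first, then $u$-direction''. Notice your own base-point step already leans on commutation with $K = U_{-2}|_{u=t=s=0}$, an F-bundle datum; the same dependence persists at every order. The paper's proof handles this by writing the full connection $1$-form including the $u$-coefficients $U_k(t,s)$ and doing a single induction in the $s$-order that simultaneously produces $S_m$, $T^i_{m+1}$ and $U_{k,m+1}$, with $S_m$ determined (à la Hertling--Manin, part (i)) by requiring $[S,T^i]\equiv 0$, $[S,U_{-2}]\equiv 0$ and condition (c) modulo $s^{m+1}$, and then verifying the remaining flatness identities propagate.

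Two smaller slips: (GC) asserts only surjectivity of the orbit map $R[\im\mu,\mathrm{K}]\to H$, not that it is an isomorphism — that stronger statement would be maximality/cyclicity, which is not assumed here (the uniqueness of $S^j$ among operators commuting with $R[\im\mu,\mathrm{K}]$ with prescribed value on $v_1$ follows from surjectivity alone, but your phrasing ``determines a unique operator $S^j\vert_{t=s=0}\in R[\im\mu,\mathrm{K}]$'' claims uniqueness as an element of the algebra, which you do not have). And your description of the induction for the $s$-corrections of $T^i$ and $S^j$ (``the cross identity \dots and the commutation identities \dots'') is not a well-posed recursion as stated: the cross identity $\partial_{t_i}S^j = \partial_{s_j}T^i$ relates two unknowns at each order and cannot by itself recover one from the other without the additional algebraic constraints supplied by the full F-bundle flatness system that the paper uses.
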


\begin{proof}
    Set $t\coloneqq \lbrace t_1,\dots ,t_d\rbrace$.
    We consider the case $\ell=1$, as we can always decompose an unfolding as a sequence of $1$-dimensional unfoldings.

    Let $H\coloneqq R^{\oplus n}$. The good basis $(h_i^{(0)})_{1\leq i\leq n}$ provides an isomorphism $\phi\colon \cH^{(0)}\xrightarrow{\sim} H\otimes_R R\dbb{t,u}$.
    Let $\cH\coloneqq H\otimes_{R} R\dbb{t,s,u}$.
    We first prove that there exists a unique connection $\nabla$ on $\cH$ such that $\iota = (i , \phi)\colon (\cH^{(0)} , \nabla^{(0)} )\rightarrow (\cH ,\nabla )$ is an unfolding satisfying \eqref{eq:construction-unfolding}.
    This is equivalent to constructing unique matrices $T^i (t,s),S(t,s),U_k (t,s)\in \Mat (n\times n, R\dbb{t,s} )$ such that the connection form 
    \[\Omega \coloneqq \frac{1}{u}\sum_{i=1}^n T^i(t,s)d t_i + \frac{1}{u} S(t,s) ds +\frac{1}{u^2} \sum_{k\geq 0} U_{k-2} (t,s)u^k du \]
    satisfies: 
    \begin{enumerate}[wide]
        \item[(a)] the flatness equation $d\Omega + \Omega\wedge\Omega = 0$,
        \item[(b)] $T^i (t,0)$ and $U_k (t,0)$ coincide with the connection matrix of $\nabla^{(0)}$ in $(h_i^{(0)} )_{1\leq i\leq n}$, and
        \item[(c)] $S(t,s)e_1 = \sum_{i=1}^n \frac{\partial f_i}{\partial s} e_i$, where $(e_i)_{1\leq i\leq n}$ is the canonical basis of $\cH/(u)\cH = R^{\oplus n}\otimes_R R\dbb{t,s}$.
    \end{enumerate}
        We further decompose the matrices into powers of $s$, and write $T_w^i (t)$ (resp.\ $S_w (t)$, $U_{k,w} (t)$) for the coefficient of $s^w$ in $T^i (t,s)$ (resp.\ $S (t,s)$, $U_k(t,s)$).
    We will construct the matrices order by order in $s$.

    Condition (a) is equivalent to the following system of equations:
    \begin{align}
        [S,T^i] &= 0 \label{eq:comm-S-T}\\
        [S, U_{-2}] &= 0 \label{eq:comm-S-U}\\
        \partial_s T^i &= \partial_{t_i} S \label{eq:derivative-S-T}\\
        \partial_{s} U_{-2} &= [U_{-1} , S] - S \label{eq:derivative-S-U-residue} \\
        \partial_{s} U_{k} &= [U_{k+1} , S] \qquad (k\geq -1) \label{eq:derivative-S-U-general} \\
        [T^i , T^j] &= 0 \label{eq:comm-T-T}\\
        [U_{-2} , T^i] &= 0 \label{eq:comm-T-U}\\
        \partial_{t_i} T^j &= \partial_{t_j} T^j \label{eq:derivative-T-T} \\
        \partial_{t_i} U_{-2} &= [U_{-1} , T^i] - T^i \label{eq:derivative-T-U-residue} \\
        \partial_{t_i} U_{k} &= [U_{k+1} , T^i] \qquad (k\geq -1) \label{eq:derivative-T-U-general}
    \end{align}
    
    We prove by induction on $m\in\bbN$ that there exists unique matrices $T_w^i (t)$ and $U_{k,w} (t)$ for $0\leq w\leq m$ and $S_w (t)$ for $0\leq w\leq m-1$ 
    such that the equations (\ref{eq:comm-S-T}) through (\ref{eq:derivative-S-U-general}) are satisfied modulo $s^m$, the equations (\ref{eq:comm-T-T}) through (\ref{eq:derivative-T-U-general}) are satisfied modulo $s^{m+1}$, condition (b) is satisfied and condition (c) is satisfied modulo $s^m$.

    For $m=0$, condition (b) provides the matrices $T_0^i (t)$ and $U_{k,0} (t)$, and the equations (\ref{eq:comm-T-T})-(\ref{eq:derivative-T-U-general}) are satisfied modulo $s$ by flatness of $\nabla^{(0)}$.

    Now assume the induction carried out until step $m$, we prove step $m+1$.
    We only need to construct the matrices $T_{m+1}^i$, $U_{k,m+1}$ and $S_m$ so that the various conditions of the induction are satisfied.
    The construction of a unique matrix $S_m$ such that (\ref{eq:comm-S-T}), (\ref{eq:comm-S-U}) and condition (c) are satisfied modulo $s^{m+1}$ is as in (i) of the proof of \cite[Lemma 2.9]{Hertling_Unfoldings_of_meromorphic_connections}.
    The matrices $T_{m+1}^i$ and $U_{k,m+1}$ are uniquely determined by imposing equations (\ref{eq:derivative-S-T})-(\ref{eq:derivative-S-U-general}) modulo $s^{m+1}$. \\
    It remains to check that equations (\ref{eq:comm-T-T})-(\ref{eq:derivative-T-U-general}) hold modulo $s^{m+2}$, assuming that equations (\ref{eq:comm-S-T})-(\ref{eq:derivative-T-U-general}) hold modulo $s^{m+1}$.
    Since they hold at $s=0$, we simply check that the $s$-derivative of these equations is zero modulo $s^{m+1}$.
    For (\ref{eq:comm-T-T}) we have modulo $s^{m+1}$
    \begin{align*}
        \partial_s [T^i, T^j] &= [\partial_s T^i , T^j] + [T^i ,\partial_s T^j] \\
        &= [\partial_{t_i} S, T^j] + [T^i ,\partial_{t_j} S] \\
        &= - [S, \partial_{t_i} T^j] - [\partial_{t_j} T^i ,S] \\
        &= 0.
    \end{align*}
    For (\ref{eq:comm-T-U}) we have modulo $s^{m+1}$
    \begin{align*}
        \partial_s [U_{-2}, T^i] &= [\partial_s U^{-2} , T^i] + [U_{-2} , \partial_s T^i] \\
        &= [[U_{-1} , S], T^i] + [U_{-2}, \partial_{t_i} S] \\
        &= [[U_{-1} ,S] , T^i] - [\partial_{t_i} U_{-2} , S]\\
        &= [[U_{-1}, S] ,T^i] - [[U_{-1}, T^i], S] \\
        &= 0 .
    \end{align*}
    For \eqref{eq:derivative-T-T} we have modulo $s^{m+1}$
    \[\partial_s(\partial_{t_i}T^j -  \partial_{t_j}T^i) = \partial_{t_i}\partial_s T^j -\partial_{t_j}\partial_s T^i  = \partial_{t_i}\partial_{t_j}S  - \partial_{t_j}\partial_{t_i}S =0 .\]
    For \eqref{eq:derivative-T-U-residue} we have modulo $s^{m+1}$
    \begin{align*}
        \partial_s \left( \partial_{t_i} U_{-2} + T^i + [T^i ,U_{-1}]\right) &= \partial_{t_i} [U_{-1} , S] - \partial_{t_i} S + \partial_{t_i} S +  [\partial_sT^i ,U_{-1}] + [T^i \partial_s U_{-1} ]\\
        &= [\partial_{t_i} U_{-1} , S] + [U_{-1} , \partial_{t_i} S] + [\partial_{t_i} S , U_{-1} ] + [T^i , [U_0 , S]] \\
        &= [[U_0 , T^i] , S] + [T^i , [U_0 ,S]] \\
        &= 0,
    \end{align*}
    where on the first line we used \eqref{eq:derivative-S-T} and \eqref{eq:derivative-S-U-residue}, on the second line we used \eqref{eq:derivative-S-T} and \eqref{eq:derivative-S-U-general}, on the third line we used \eqref{eq:derivative-T-U-general}, and on the last line we used the Jacobi identity and \eqref{eq:comm-S-T}.
    For (\ref{eq:derivative-T-U-general}) we have modulo $s^{m+1}$
    \begin{align*}
        \partial_s \left(\partial_{t_i} U_k + [T^i , U_{k+1}] \right) &= \partial_{t_i} [U_{k+1} , S] + [\partial_s T^i , U_{k+1}] + [T^i ,\partial_s U_{k+1}] \\
        &= [\partial_{t_i} U_{k+1} ,S] + [T^i , \partial_s U_{k+1}] \\
        &= [[U_{k+2} ,T^i], S] + [T^i ,[U_{k+2}, S]] \\
        &= 0.
    \end{align*}
    This finishes the induction step, and proves the existence.

    For uniqueness up to isomorphism, assume that $\iota'\colon (\cH^{(0)} ,\nabla^{(0)} )\rightarrow (\cH ',\nabla ' )$ is another unfolding satisfying \eqref{eq:construction-unfolding}.
    We prove that it is isomorphic to the unfolding $(\cH,\nabla )$ constructed above.
    Let $\psi\colon \cH\rightarrow \cH'$ denote the $R\dbb{t,s,u}$-module isomorphism obtained by identifying the good bases obtained from $(h_i^{(0)} )_{1\leq i\leq n}$.
    Then the connection form of $\psi^{-1}\circ \nabla'\circ \psi$ in the trivialization of $\cH$ given by $(e_1,\dots, e_n)$ satisfies conditions (a), (b) and (c) above.
    Thus $\psi^{-1}\circ\nabla'\circ \psi = \nabla$, and we conclude that $(\id \psi)\colon (\cH ,\nabla )\rightarrow (\cH' ,\nabla' )$ is an isomorphism of unfoldings.
\end{proof}

\cref{lemma:existence-uniqueness-unfoldings} says that under the (GC) assumption, an unfolding $\iota\colon (\cH^{(0)},\nabla^{(0)})\rightarrow (\cH ,\nabla )$ is uniquely determined up to isomorphism by the choice of a good basis $(h_1,\dots, h_n)$ extending a cyclic vector, and the action of the connection on $h_1$.

\begin{theorem}[Hertling-Manin for F-bundles] \label{thm:Hertling-Manin-F-bundles}
    Let $R$ be an integral domain containing $\bbQ$.
    Let $(\cH ,\nabla ) /R\dbb{t_1,\dots, t_d}$ be a finite rank F-bundle. 
    Let $v\in \cH/(t_1,\dots, t_d,u)\cH$.
    \begin{enumerate}[wide]
        \item If $v$ satisfies (IC), (GC) and $\coker\mu_v$ is free, then there exists a maximal unfolding with cyclic vector induced from $v$.
        \item If $v$ satisfies (GC'), then any two maximal unfoldings of $(\cH,\nabla )$ with cyclic vector induced from $v$ are isomorphic under a unique isomorphism.
    \end{enumerate}
    Furthermore, any framing for $(\cH,\nabla )$ induces a unique framing on a maximal unfolding.
\end{theorem}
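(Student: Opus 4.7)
The plan is to adapt Hertling and Manin's argument to the formal setting, with \cref{lemma:existence-uniqueness-unfoldings} as the main engine and \cref{corollary: bundle map is uniquely determined on a point}, \cref{remark:uniqueness-bundle-map-unfolding}, and \cref{remark:unfolding-framing} handling bundle maps, base maps, and framings.

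\textbf{Existence.} Condition (IC) and the freeness of $\coker \mu_v$ force the short exact sequence
\[
0 \longrightarrow \bigoplus_{i=1}^{d} R\partial_{t_i} \xrightarrow{\mu_v} H \longrightarrow \coker \mu_v \longrightarrow 0
\]
to split. Lifting a basis $\bar w_1, \ldots, \bar w_r$ of $\coker \mu_v$ to vectors $w_1, \ldots, w_r \in H$ (with $r = n - d$), the tuple $(\mu_v(\partial_{t_i}), w_j)$ becomes an $R$-basis of $H$. I pick a good basis $(h_1^{(0)}, \ldots, h_n^{(0)})$ of $(\cH, \nabla)$ extending $v$ (which exists by \cref{lemma:weak-framing-t-directions}), expand $w_j = \sum_i c_{ij}\, h_i^{(0)}|_{u=t=0}$, and apply \cref{lemma:existence-uniqueness-unfoldings} with $\ell = r$ new parameters $s_1, \ldots, s_r$ and $f_i(t, s) = \sum_j c_{ij} s_j$. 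For the induced cyclic vector candidate $v'$ of the resulting unfolding, one checks $\mu_{v'}(\partial_{t_i}) = \mu_v(\partial_{t_i})$ and $\mu_{v'}(\partial_{s_j}) = w_j$, so $\mu_{v'}$ is an isomorphism and the unfolding is maximal.

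\textbf{Uniqueness.} Let $\iota_k\colon (\cH, \nabla) \to (\cH_k, \nabla_k)$, $k = 1, 2$, be two maximal unfoldings with cyclic vectors $v_k$ induced from $v$; both have $r = n - d$ new parameters $s_k = (s_{k,j})$. I pick a good basis $(h_i^{(0)})$ of $(\cH, \nabla)$ extending $v$ and extend it to good bases of each $(\cH_k, \nabla_k)$ via \cref{corollary: bundle map is uniquely determined on a point}; by \cref{lemma:existence-uniqueness-unfoldings}, each $(\cH_k, \nabla_k)$ is then recorded in its good basis by characterizing functions $f_i^{(k)}(t, s_k)$. Base-changing to $\Frac(R)$ so that $v$ satisfies (GC), I construct a base change $\phi\colon \Frac(R)\dbb{t, s_1} \to \Frac(R)\dbb{t, s_2}$ order by order in $s_2$; at each step, I invert $\mu_{v_1}$ (invertible by maximality of $\iota_1$) to solve the linear system arising from the chain rule matching $f_i^{(1)} \circ \phi$ with $f_i^{(2)}$. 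The uniqueness in \cref{lemma:existence-uniqueness-unfoldings} then yields an isomorphism $\iota_2 \cong \iota_1$ over $\Frac(R)$. Since all input data (the $f_i^{(k)}$, the matrix of $\mu_{v_1}$, and the good bases) are defined over $R$, the inductively constructed $\phi$ has coefficients in $R$, so the isomorphism descends to $R$; it is unique by \cref{remark:uniqueness-bundle-map-unfolding} and by the uniqueness of the inductive solution. Framings extend uniquely to a maximal unfolding by \cref{remark:unfolding-framing}.

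\textbf{Main obstacle.} The core technical step is the inductive construction of $\phi$. At each order, one must confirm that the $R$-linear system determining the next-order coefficients of $\phi$ is consistent—a compatibility reflecting the flatness built into both F-bundles—and then solve it via the inverse of $\mu_{v_1}$. The consistency at each order, and the descent from $\Frac(R)$ to $R$, both ultimately rest on the maximality of $\iota_1$ together with the $R$-integrality of the characterizing data.
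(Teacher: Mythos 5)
Your existence argument is essentially the paper's: split the sequence defined by $\mu_v$ using (IC) and freeness of $\coker\mu_v$, lift a basis of $\coker\mu_v$ to $w_1,\dots,w_r$, expand in the good basis to get constants $c_{ij}$, and feed the linear functions $f_i(t,s)=\sum_j c_{ij}s_j$ into \cref{lemma:existence-uniqueness-unfoldings}. This matches the paper exactly.

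Your uniqueness argument, however, differs in a way that both changes the route and leaves the decisive step unfinished. You propose to produce the base change $\phi$ by an order-by-order solve in $s_2$, inverting $\mu_{v_1}$ at each stage, and you explicitly flag the ``main obstacle'' — verifying that the resulting $R$-linear system is consistent at each order — without resolving it. The paper sidesteps the whole induction by exploiting a structural fact you do not use: in the good-basis trivialization, the full connection $1$-form (in both $t$- and $s$-directions) is \emph{closed} by equation~\eqref{eq:derivative-T-T}, hence admits a unique matrix-valued primitive $A_k$ with $A_k(0,0)=0$. The first column of $A_k$ gives $n$ elements of $R\dbb{t,s}$ defining a map $\psi_k\colon R\dbb{t,s}\to R\dbb{t,s}$ whose Jacobian at the origin is precisely the matrix of $\mu_{v_k}$, and maximality then makes $\psi_k$ an automorphism over $R$ by the formal inverse function theorem. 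The base change is $f=\psi_2^{-1}\circ\psi_1$ in closed form; consistency is automatic because $\psi_k$ packages the entire system at once, and $R$-integrality is automatic because $\psi_k\in\Aut_R(R\dbb{t,s})$. Note also that the objects you call $f_i^{(k)}$ — the characterizing functions of \cref{lemma:existence-uniqueness-unfoldings} — only encode the action of the connection on $h_1$ in the $s$-directions, while after a base change mixing $t$ and $s$ you must match the action in all directions simultaneously; the correct invariant is the full primitive (the first column of $A_k$), not just the $s$-part. Your induction, if carried out, would implicitly be reconstructing this primitive order by order; using the closedness observation to write it down globally is exactly what makes the paper's argument short. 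Finally, for the framing statement the paper appeals to \cite[Theorem 1.3]{HYZZ_decomposition_and_framing_of_F_bundles}; \cref{remark:unfolding-framing} alone only handles the (T)-structure and needs to be supplemented by \cref{lemma:framing-T-structure-to-F-bundle} to conclude for the F-bundle.
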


\begin{proof}
    Let $n$ denote the rank of $\cH$, and $\ell := n-d$.
    We assume $\ell\geq 0$, as otherwise the evaluation map $\mu_v$ cannot be injective and a maximal unfolding of $(\cH ,\nabla )$ does not exist.
    Write $t= \lbrace t_1,\dots, t_d\rbrace$ and $s = \lbrace s_1,\dots ,s_{\ell} \rbrace$.
    Fix a good basis $(h_1,\dots, h_n)$ for $(\cH ,\nabla )$ extending $v$, i.e. with $h_1\vert_{t=u=0} =v$.

    For (1), let $N\in \Mat (n\times d , R)$ denote the matrix of the evaluation map $\mu_v$.
    Let $f_1,\dots, f_n\in R\dbb{t,s}$ with $f_i(t,0) = 0$.
    Applying \cref{lemma:existence-uniqueness-unfoldings} we obtain an unfolding $\iota\colon (\cH,\nabla ) / R\dbb{t} \rightarrow (\cH',\nabla' ) /R\dbb{t,s}$.
    Let $v'\in \cH '/ (t,s,u)\cH'$ corresponding to $v$, the matrix of the evaluation map $\mu_{v'}$ in the good basis obtained from $(h_i)_{1\leq i\leq n}$ is 
    \begin{equation}\label{eq:matrix-maximal-unfolding}
        \begin{pmatrix}
        N &  (\frac{\partial f_i}{\partial s_j}\big\vert_{t=s=0})_{1 \leq i\leq n,1\leq j\leq \ell} 
    \end{pmatrix} \in \Mat (n\times n, R).
    \end{equation}
    Since $v$ satisfies (IC), the columns of $N$ form a basis of $\im\mu_v\subset\cH /(t_1,\dots, t_d,u)\cH$.
    Since $\coker\mu_v$ is free, by the basis extension theorem, we can extend this basis to a basis of $\cH /(t_1,\dots, t_d,u)\cH$ by adding elements $\{v_1,\dots,v_\ell \}$. 
    Any choice $(f_1,\dots, f_n)$ such that the vector $(\frac{\partial f_i}{\partial s_k}\vert_{t=s=0})_{1\leq i\leq n}$ corresponds to $v_k$ for all $1\leq k\leq \ell$ gives rise to a maximal unfolding, since the columns of \eqref{eq:matrix-maximal-unfolding} then form a basis of $H$.
    This proves (1).

    We now prove (2).
    For $k=1,2$, let $\iota_k = (i_k,\phi_k)\colon (\cH,\nabla ) \rightarrow (\cH_k',\nabla_k')$ be a maximal unfolding.
    In the good bases induced from $(h_i)_{1\leq i\leq n}$ the $1$-forms defining the (T)-structures are closed by \eqref{eq:derivative-T-T}, hence can be written as $u^{-1} dA_k$ for a unique $A_k\in \Mat (n\times n ,R)\dbb{t,s}$ satisfying $A_k(0,0) = 0$.
    The first column of $A_k$ provides $n$ elements of $R\dbb{t,s}$ that define a map of $R$-algebras $\psi_k\colon R\dbb{t,s}\rightarrow R\dbb{t,s}$.  
    Since the unfoldings are assumed to be maximal,  $d\psi_k\vert_{t=s=0}$ is an isomorphism. This follows from the fact that, by construction, its matrix in the basis $(dt_1,\dots, dt_d,ds_1,\dots , ds_{\ell} )$ coincides with the matrix of the evaluation map for $(\cH',\nabla' )$.
    We deduce that $\psi_k\in \Aut_R (R\dbb{t,s})$.
    If $(f,j)\colon (\cH_1,\nabla_1)\rightarrow (\cH_2,\nabla_2)$ is an isomorphism of unfoldings, then $f^{\ast}dA_2 = dA_1$ which implies $A_2\circ f = A_1$.
    In particular $\psi_2\circ f = \psi_1$, and this determines $f$ uniquely, since $\psi_2$ is an isomorphism. 
    In turn, this determines $j$ uniquely by \cref{remark:uniqueness-bundle-map-unfolding}.
    Conversely, let $f= \psi_2^{-1}\circ\psi_1$ and define $j\colon \cH_1'\rightarrow f^{\ast}\cH_2'$ by identifying the good bases induced from $(h_i)_{1\leq i\leq n}$. 
    In particular, we have $d\psi_1 = d\psi_2\circ df$, therefore $f^{\ast} (\cH_2',\nabla_2')$ is a maximal unfolding whose action on the cyclic section that extends $h_1$ agrees with that of $(\cH_1',\nabla_1')$.
    After base changing to $\Frac (R)$, the (GC) condition is satisfied.
    It follows from \cref{lemma:existence-uniqueness-unfoldings} that $(f,j)$ is compatible with the connections and is an isomorphism of unfoldings after base changing to $\Frac (R)$.
    But $f$ (resp. $j$) is invertible over $R$ (resp. $R\dbb{t,s,u}$) by construction, so the unfoldings are isomorphic over $R$.
      
    The last claim follows from the extension of framing result in \cite[Theorem 1.3]{HYZZ_decomposition_and_framing_of_F_bundles}.
    The proof is complete.
\end{proof}

\begin{corollary}[Hertling-Manin for (T)-structures]\label{thm:hertling-manin-T-structure}
    Let $R$ be an integral domain containing $\bbQ$.
    Let $(\cH ,\nabla ) /R\dbb{t_1,\dots, t_d}$ be a finite rank (T)-structure. 
    Let $v\in \cH/(t_1,\dots, t_d,u)\cH$.
    \begin{enumerate}[wide]
        \item If $v$ satisfies (IC), (GC) and $\coker\mu_v$ is free, then there exists a maximal unfolding with cyclic vector induced from $v$.
        \item If $v$ satisfies (GC'), then any two maximal unfoldings of $(\cH,\nabla )$ with cyclic vector induced from $v$ are isomorphic under a unique isomorphism.
    \end{enumerate}
\end{corollary}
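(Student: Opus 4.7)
The plan is to deduce this corollary from \cref{thm:Hertling-Manin-F-bundles} by lifting the given (T)-structure to an F-bundle, applying the F-bundle version of the unfolding theorem, and then forgetting the $u$-direction connection at the end. First, by \cref{lemma:weak-framing-t-directions}, the (T)-structure $(\cH,\nabla)/R\dbb{t_1,\dots,t_d}$ admits a framing. By \cref{lemma:lift-T-structure-morphism-to-F-bundle}(1), any choice of $u$-direction data at $t=0$ then extends uniquely to an F-bundle structure $(\cH,\hat\nabla)$ whose underlying (T)-structure is $(\cH,\nabla)$; for concreteness, one may take $\hat\nabla_{\partial_u}\vert_{t=0}=\partial_u$ in the framing trivialization.

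For existence (part 1), apply \cref{thm:Hertling-Manin-F-bundles}(1) to the F-bundle $(\cH,\hat\nabla)$ with the vector $v$. The conditions (IC), (GC) and the freeness of $\coker\mu_v$ depend only on the underlying (T)-structure, and are therefore satisfied for the F-bundle lift. This yields a maximal F-bundle unfolding $\iota\colon(\cH,\hat\nabla)\to(\cH',\hat\nabla')$. Forgetting the $u$-direction connections produces a maximal (T)-structure unfolding of $(\cH,\nabla)$, since maximality is defined purely through the evaluation map $\mu_v$ in the $t$-directions.

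For uniqueness (part 2), let $\iota_k\colon(\cH,\nabla)\to(\cH_k,\nabla_k)$ for $k=1,2$ be two maximal (T)-structure unfoldings with cyclic vectors induced from $v$. Lift $(\cH,\nabla)$ to $(\cH,\hat\nabla)$ as above. Each $(\cH_k,\nabla_k)$ admits a framing by \cref{lemma:weak-framing-t-directions}, so \cref{lemma:extend u-direction from T-structure} promotes $\iota_k$ uniquely to an F-bundle unfolding $(\cH_k,\hat\nabla_k)$ over the same base F-bundle $(\cH,\hat\nabla)$. Both are maximal F-bundle unfoldings, so \cref{thm:Hertling-Manin-F-bundles}(2) furnishes a unique F-bundle isomorphism of unfoldings between them, which is in particular a (T)-structure isomorphism of the original unfoldings.

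The main subtlety is to show that this isomorphism is unique among all (T)-structure isomorphisms of unfoldings, not merely among those that happen to be F-bundle isomorphisms. This is resolved by inspecting the uniqueness argument in the proof of \cref{thm:Hertling-Manin-F-bundles}(2): that argument extracts the base map $f$ from the closed $1$-forms $u^{-1}dA_k$, which are defined by the (T)-structure data alone, and then appeals to \cref{remark:uniqueness-bundle-map-unfolding} (equivalently \cref{corollary: bundle map is uniquely determined on a point}) to conclude that the bundle component of any morphism of unfoldings is determined by its base component. Both steps live entirely at the (T)-structure level, so the uniqueness conclusion transfers to the present setting without modification.
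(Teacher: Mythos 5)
Your proof is correct and follows essentially the same route as the paper: use \cref{lemma:weak-framing-t-directions} to produce a framing, lift the (T)-structure to an F-bundle via \cref{lemma:lift-T-structure-morphism-to-F-bundle}(1), apply \cref{thm:Hertling-Manin-F-bundles}, and descend to (T)-structures. For uniqueness in part (2), you lift the two (T)-structure unfoldings via \cref{lemma:extend u-direction from T-structure}, exactly as the paper does.

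The one place you diverge is how you justify that the isomorphism from \cref{thm:Hertling-Manin-F-bundles}(2) is unique among \emph{all} (T)-structure isomorphisms of unfoldings, not only among F-bundle isomorphisms. The paper compresses this to ``hence the same holds for the underlying unfoldings of (T)-structures.'' You resolve it by unwinding the proof of \cref{thm:Hertling-Manin-F-bundles}(2) and observing that both steps (determining the base map from the closed forms $u^{-1}dA_k$, then the bundle map via \cref{remark:uniqueness-bundle-map-unfolding}) use only the (T)-structure connection and good bases of the underlying (T)-structures, so the determination applies verbatim to any (T)-structure isomorphism of unfoldings. This is a legitimate and complete argument. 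An alternative, arguably closer to the machinery the paper has already set up, is to note that any (T)-structure isomorphism of unfoldings $(f,\psi)$ restricts at $t=0$ to $\phi_1\circ\phi_2^{-1}|_{t=0}$, which by construction of the $u$-direction lifts in \cref{lemma:extend u-direction from T-structure} intertwines $\hat\nabla_{k,\partial_u}|_{t=0}$; since the targets admit framings (\cref{lemma:weak-framing-t-directions}), \cref{lemma:lift-T-structure-morphism-to-F-bundle}(2) upgrades $(f,\psi)$ to an F-bundle isomorphism automatically, so uniqueness transfers. Either justification is fine; yours is self-contained and correct.
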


\begin{proof}
    Let $n$ denote the rank of $\cH$. 
    Write $t = \lbrace t_1,\dots, t_d\rbrace$ and $s =\lbrace s_1,\dots ,s_{n-d}\rbrace$.
    We choose an F-bundle structure $(\cH ,\nabla^F)/R\dbb{t}$ lifting the (T)-structure $(\cH ,\nabla)$.
    Then $(\cH,\nabla^F)$ satisfies the conditions of \cref{thm:Hertling-Manin-F-bundles}(1), producing a maximal unfolding of F-bundle. 
    Since being maximal is a property of the (T)-structure, the unfolding of the underlying (T)-structure is maximal, proving (1).
        
    For (2), let $\iota_1\colon (\cH,\nabla ) /R\dbb{t}\rightarrow (\cH_1,\nabla_1)/R\dbb{t,s}$ and $\iota_2 \colon (\cH,\nabla)/R\dbb{t}\rightarrow(\cH_2,\nabla_2)/R\dbb{t,s}$ be two maximal unfoldings of (T)-structures, with cyclic vector induced from $v$. 
    Since the base of $(\cH ,\nabla )$ has finitely many variables, it follows from \cref{lemma:weak-framing-t-directions} that it admits a framing. 
    This induces a framing on any unfolding by \cref{remark:unfolding-framing}.
    Thus, we can apply \cref{lemma:extend u-direction from T-structure} and extend the two unfoldings $\iota_1$ and $\iota_2$ uniquely to maximal unfoldings of the F-bundle $(\cH,\nabla^F)$.
    We conclude from \cref{thm:Hertling-Manin-F-bundles} that they are isomorphic under a unique isomorphism, hence the same holds for the underlying unfoldings of (T)-structures.
    This concludes the proof.
\end{proof}

\begin{remark}[Existence when $R$ is not a field]\label{remark:existence-max-unfolding}
    Let $R$ be an integral domain,   $(\cH,\nabla )/ R\dbb{t_1,\dots, t_d}$ be a finite rank F-bundle, and  $v\in H\coloneqq \cH/(t_1,\dots, t_d,u)\cH$.
    \begin{enumerate}[wide]
        \item If $v$ only satisfies (IC) and (GC'), we know that a maximal unfolding exists after base change to $\Frac (R)$.
        In fact, the maximal unfolding is defined over any localization $R'$ of $R$ such that $\coker\mu_v\otimes_R R'$ is a free module, by \cref{thm:Hertling-Manin-F-bundles}(1).
        \item Let $(\cH,\nabla ) \rightarrow (\cH',\nabla' )$ be an unfolding. 
        We obtain maps $\mu$ and $\mu'$ as in \eqref{eq:map-to-endomorphism-fiber}.
        Let $\cA\coloneqq R[\im\mu]$ and $\cA'\coloneqq R[\im\mu']$ denote the associated commuting subalgebras of $\End_R (H)$.
        We have $\cA\subset \cA'\subset \cC (\cA')\subset \cC (\cA)$, where $\cC(\cdot )$ denotes the commutant algebra.
        Let $\tmu_v\colon \cA\rightarrow H$ and $\tmu_v'\colon \cA'\rightarrow H$ denote the evaluation on $v$.
        From the commutative diagram
        \[\begin{tikzcd}
            0 \ar[r] & \cA \ar[d,"\tmu_v"] \ar[r] & \cA' \ar[d,"\tmu_v'"]\ar[r]  & \cA' / \cA \ar[d]\ar[r] & 0 \\
            0 \ar[r] & H\ar[r, "\id"] & H \ar[r] & 0, &
        \end{tikzcd}\]
        we obtain the long exact sequence 
        \[0\longrightarrow \ker\tmu_v\longrightarrow \ker\tmu_v'\longrightarrow \cA '/ \cA \longrightarrow \coker\tmu_v \longrightarrow \coker \tmu_v'\longrightarrow 0 .\]
        If the unfolding is maximal, we have $\cA' = \im\mu'$ and $\tmu_v'$ is an isomorphism. 
        We deduce that $\ker\tmu_v = 0$ and $\coker\tmu_v\simeq \cA'/\cA$.
        Then, $v$ satisfies the (IC) condition but not necessarily the (GC) condition.
        In the special case when $\cA = \cC (\cA)$, a maximal unfolding exists if and only if $v$ satisfies (IC) and (GC).
    \end{enumerate}
    This is illustrated in \cref{example:full-not-maximal}.
\end{remark}

\begin{example}\label{example:full-not-maximal}
    Let $R = \bbk\dbb{\lambda_1,\lambda_2}$, $H = R^{\oplus 3}$ and $\cH = H\otimes_R R\dbb{t_1,t_2}$.
    Let $(e_1,e_2,e_3)$ denote the canonical basis of $H$. We consider the matrices
    \[A = \Id_3 , \; B = \begin{pmatrix} 0 & 0 & 1 \\ \lambda_1 & 0 & 0 \\ 0 & \lambda_2 & 0\end{pmatrix} , \; C = B^2 = \begin{pmatrix}
        0  & \lambda_2 & 0 \\ 0 & 0 & \lambda_1 \\ \lambda_1\lambda_2 & 0 & 0 
    \end{pmatrix} .\]
    Assume $\nabla$ is an F-bundle connection on $\cH$ such that $\mu(\partial_{t_1} ) = A$ and $\mu (\partial_{t_2}) = B$.
    We have $R[\im\mu] = RA\oplus RB\oplus RC$ and $R[\im\mu] = \cC (R[\im\mu])$.
    It follows from \cref{remark:existence-max-unfolding}(2) that there exists a maximal unfolding with cyclic vector $v = \alpha e_1+\beta e_2 + \gamma e_3$ if and only if $v$ satisfies (IC) and (GC).
    The matrix of the evaluation map $\tmu_v\colon R[\im \mu]\rightarrow H$ with respect to the bases $(A,B,C)$ and $(e_1,e_2,e_3)$ is
    \[\tmu_v = \begin{pmatrix}
        \alpha  & \gamma & \lambda_2 \beta \\
        \beta & \lambda_1\alpha & \lambda_1\gamma \\
        \gamma & \lambda_2 \beta & \lambda_1\lambda_2\alpha
    \end{pmatrix},\]
    whose determinant is $\lambda_1^2\lambda_2\alpha^3 + \lambda_2^2\beta^3 + \lambda_1\gamma^3 -3\lambda_1\lambda_2\alpha\beta\gamma$.
    The vector $v$ satisfies (IC) and (GC) if and only if this determinant is invertible.
    For $v=e_3$, this determinant is $\lambda_1$ and we conclude that the associated maximal unfolding is defined over $\bbk\dbb{\lambda_1,\lambda_2}[\lambda_1^{-1}]$.
    For $v=e_2$, this determinant is $\lambda_2^2$ and the associated maximal unfolding is defined over $\bbk\dbb{\lambda_1,\lambda_2}[(\lambda_2^2)^{-1}]$.
\end{example}

\subsection{Unfolding theorem for equivariant F-bundles}

In this subsection, we prove the unfolding theorem for equivariant F-bundles. 
The strategy is to unfold the $R$-linear (T)-structure using \cref{thm:hertling-manin-T-structure}, and then extend it in the $u$-direction using \cref{lemma:lift-T-structure-morphism-to-F-bundle}.

\begin{definition} \label{def:unfolding_of_equivariant_F-bundle}
    Let $R$ be a $\bbk$-algebra, and let $I$ be a countable set.
    \begin{enumerate}[wide]
        \item An \emph{unfolding of $\bbk$-linear equivariant F-bundle} $\lbrace (\cH,\nabla) ,(\cH_R,\nabla_R) ,\alpha\rbrace / \bbk\dbb{\bt_I}$ is a morphism of equivariant F-bundles $(\iota,\iota_R)$ such that $\iota$ is an unfolding of $\bbk$-linear F-bundles and $\iota_R$ is an unfolding of $R$-linear (T)-structure. 
        In particular, $\iota$ and $\iota_R$ are compatible with the $R$-linear lifts as in \eqref{cd:morphism-eq-F-bundle}.
        \item A \emph{morphism of unfoldings} is a morphism $(\beta ,\beta_R)$ of equivariant F-bundles such that both $\beta$ and $\beta_R$ are morphisms of unfoldings.
        In particular, $(\beta,\beta_R)$ commutes with the unfolding maps. 
        \item An equivariant F-bundle is \emph{maximal} if the underlying $R$-linear (T)-structure is maximal.
    \end{enumerate}
\end{definition}

\begin{lemma} \label{lemma:framing-T-structure-to-F-bundle}
    Let $I$ be a countable set.
    Let $(\cH ,\nabla ) / R\dbb{t_I}$ be an F-bundle. 
    A framing for the (T)-structure $(\cH ,\nabla )_0$ is a framing for the F-bundle if and only if it restricts to a framing of F-bundles at $t_I=0$.
\end{lemma}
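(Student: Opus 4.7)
The forward direction is immediate: restricting a framing of the F-bundle to $t_I = 0$ preserves the property that the connection matrices contain only negative powers of $u$. For the converse, my plan is to propagate the framing property from $t_I = 0$ to all $t$ via the flatness of the F-bundle connection.

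Fix the given framing trivialization of $(\cH,\nabla)_0$. In this trivialization I can write $\nabla_{\partial_{t_i}} = \partial_{t_i} + u^{-1}T^i(t)$ with $T^i$ independent of $u$ (because the trivialization is a framing for the (T)-structure), and $\nabla_{\partial_u} = \partial_u + u^{-2}U(t,u)$ with $U \in \End_R(H)\dbb{t_I,u}$ regular in $u$. The hypothesis asserts that $U(0,u)$ is polynomial in $u$ of degree at most one, and the goal is to prove the same for $U(t,u)$. The flatness equation $[\nabla_{\partial_{t_i}}, \nabla_{\partial_u}] = 0$ simplifies to
\[
\partial_{t_i}U = -T^i + u^{-1}[U, T^i] .
\]
Matching $u^{-1}$-coefficients (the left-hand side being regular in $u$) forces the constraint $[T^i(t), U_0(t)] = 0$ for all $i$ and $t$, where $U_0$ denotes the $u^0$-coefficient of $U$.

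I then expand $U = \sum_\alpha U^{(\alpha)}(u)\, t^\alpha$ and $T^i = \sum_\gamma T^{i,(\gamma)}\, t^\gamma$ and argue by induction on $|\alpha|$ that every $U^{(\alpha)}(u)$ has $u$-degree at most one. The base case $\alpha = 0$ is the hypothesis. In the inductive step, choosing some $i$ with $\alpha_i \geq 1$ and extracting the $t^{\alpha - e_i}$-coefficient of the flatness equation yields
\[
\alpha_i\, U^{(\alpha)}(u) = -T^{i,(\alpha - e_i)} + u^{-1}\sum_{\beta + \gamma = \alpha - e_i}\,[U^{(\beta)}(u), T^{i,(\gamma)}] .
\]
By induction each $U^{(\beta)}$ appearing on the right has $u$-degree at most one, so each commutator does too; crucially, the $u^{-1}$-residue of the whole sum is precisely the $t^{\alpha - e_i}$-coefficient of $[U_0, T^i]$, which vanishes by the constraint above. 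Hence the right-hand side has $u$-degree at most zero, and dividing by $\alpha_i$ forces $U^{(\alpha)}$ to be $u$-independent. Assembling, $U(t,u) = U_0(t) + u\,U_1(0)$ has $u$-degree at most one, so the given trivialization is also a framing for the F-bundle.

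The main subtlety is recognizing that the potential $u^{-1}$-singularity introduced by the factor $u^{-1}$ in front of the commutator is automatically killed by the flatness constraint $[T^i, U_0] = 0$; once this is noted, the induction runs mechanically. An alternative approach would be to construct a framed candidate F-bundle extension from the data at $t = 0$ and invoke the uniqueness in \cref{lemma:lift-T-structure-morphism-to-F-bundle}(1), but verifying that the candidate is genuinely flat requires essentially the same constraint check, so the direct induction appears cleanest.
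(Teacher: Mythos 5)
Your proof is correct and rests on the same idea as the paper's: use flatness of the F-bundle connection to propagate the degree bound on $U$ from $t_I=0$ to all $t$. The paper instead expands $U$ in powers of $u$, getting the recursion $\partial_{t_i}U_k = -[T_i,U_{k+1}]$ for $k\geq 0$, and invokes Lemma 4.8(1) of \cite{HYZZ_decomposition_and_framing_of_F_bundles} inductively on the number of variables to conclude $U_k\equiv 0$; your version expands in powers of $t$ and runs the same double induction explicitly, with the price of having to observe that the $u^{-1}$-residue cancellation $[U_0,T^i]=0$ (automatic from flatness) kills the apparent pole — a step the paper's $u$-indexing sidesteps by construction. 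The result is a self-contained argument in place of the paper's citation, but the mechanism is identical.
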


\begin{proof}
    The framing provides a trivialization $\cH\simeq H\otimes_R R\dbb{t_I,u}$.
    Write $\nabla_{\partial_{t_i}} = \partial_{t_i} + u^{-1} T_i (t)$ and $\nabla_{\partial_u} = \partial_u + u^{-2} U(t,u)$.
    By \cref{lemma:lift-T-structure-morphism-to-F-bundle}(1), $U(t,u)$ is uniquely determined by the system of differential equations \eqref{eq: extending U direction} and the initial condition $U(0,u)$.
    Write $U (t,u) = \sum_{k\geq 0} U_{k-2} (t) u^k$.
    The differential equation implies for all $k\geq 0$
    \[\frac{\partial U_k}{\partial t_i} = -[T_i, U_{k+1}] .\]
    Since we have the initial condition $U_k (0) = 0$, we deduce that $U_k (t) = 0$ for all $k\geq 0$ by applying \cite[Lemma 4.8(1)]{HYZZ_decomposition_and_framing_of_F_bundles} inductively on the number of variables.
    The reverse direction is obvious.
\end{proof}

\begin{proposition} \label{prop:extension-framing-eq-F-bundle}
    Let $I$ and $J$ be  finite sets, and $R$ be a $\bbk$-algebra without zero divisors equipped with a fixed basis.
    Let $\cF\rightarrow \cF'$ be an unfolding of $\bbk$-linear equivariant F-bundles.
    Then any framing on $\cF$ extends uniquely to a framing on $\cF'$.
\end{proposition}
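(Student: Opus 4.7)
The plan is to extend the framing on the $R$-linear side first, then transport it to the $\bbk$-linear side via $\alpha'$, and finally promote the resulting (T)-structure framing to an F-bundle framing.

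First, since $I$ and $J$ are finite sets, I apply \cref{remark:unfolding-framing} to the unfolding of $R$-linear (T)-structures $\iota_R\colon(\cH_R,\nabla_R)/R\dbb{t_I}\to(\cH_R',\nabla_R')/R\dbb{t_J}$. This extends the given framing on $(\cH_R,\nabla_R)$ uniquely to a framing on $(\cH_R',\nabla_R')$. Next, I use \cref{lifting lemma}(2) to obtain an induced framing on the $\bbk$-linear (T)-structure $(\tcH_R',\tnabla_R')/\bbk\dbb{\bt_J}$, and then transport it along $\alpha'\colon(\cH',\nabla')_0\xrightarrow{\sim}(\tcH_R',\tnabla_R')$ to produce a framing of the $\bbk$-linear (T)-structure $(\cH',\nabla')_0$. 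By construction, this framing is compatible with the framing on $(\cH_R',\nabla_R')$ via $\alpha'$, so it meets the compatibility requirement in \cref{def:framing-eq-F-bundle}.

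The main obstacle is the third step: verifying that this (T)-structure framing on $(\cH',\nabla')_0$ is actually a framing of the F-bundle $(\cH',\nabla')$. To do this, I invoke \cref{lemma:framing-T-structure-to-F-bundle}: it suffices to check that restricting to $\bt_J=0$ yields a framing of the F-bundle fiber $(\cH',\nabla')|_{\bt_J=0}$. Since $\iota$ is an unfolding, the fiber identification $(\cH',\nabla')|_{\bt_J=0}\simeq(\cH,\nabla)|_{\bt_I=0}$ holds as $\bbk$-linear F-bundles. Similarly, restricting the extended $R$-linear framing to $t_J=0$ yields the restriction at $t_I=0$ of the original framing on $(\cH_R,\nabla_R)$, and hence via the compatibility squares for both $\alpha$ and $\alpha'$ (coming from the morphism of equivariant F-bundles diagram \eqref{cd:morphism-eq-F-bundle}), the induced framing on $(\cH',\nabla')_0|_{\bt_J=0}$ coincides with the restriction at $\bt_I=0$ of the original framing on $(\cH,\nabla)$. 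That restriction is an F-bundle framing by hypothesis, so \cref{lemma:framing-T-structure-to-F-bundle} delivers the desired F-bundle framing on $(\cH',\nabla')$.

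For uniqueness, any framing on $\cF'$ extending the one on $\cF$ restricts in particular to a framing on $(\cH_R',\nabla_R')$ extending the framing on $(\cH_R,\nabla_R)$, which is unique by \cref{remark:unfolding-framing}. By \cref{remark:morphism-eq-F-bundle}, the framing on the $\bbk$-linear side is then determined by compatibility via $\alpha'$. Combining these two uniqueness statements gives uniqueness of the extended framing on $\cF'$.
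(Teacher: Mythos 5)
Your proof is correct and takes essentially the same route as the paper's: extend the framing on the $R$-linear (T)-structure via \cref{remark:unfolding-framing}, transport it through \cref{lifting lemma}(2) and $\alpha'$ to a (T)-structure framing on $(\cH',\nabla')_0$, then invoke \cref{lemma:framing-T-structure-to-F-bundle} by checking the restriction at $\bt_J=0$, with uniqueness from the uniqueness on the $R$-linear side plus \cref{remark:morphism-eq-F-bundle}. You spell out the $\bt_J=0$ compatibility check (fiber identification for the unfolding and the two commuting squares with $\alpha$, $\alpha'$) in slightly more detail than the paper's one-line "by construction," which is a welcome elaboration rather than a deviation.
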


\begin{proof}
    Uniqueness follows from the uniqueness of extension of framing for $(\cH_R,\nabla_R)$, together with \cref{remark:morphism-eq-F-bundle}.
    
    We now prove the existence part. Assume $\cF$ admits a framing and \[(\beta,\beta_R)\colon \cF = \{(\cH,\nabla),(\cH_R,\nabla_R),\alpha\}/\bbk\dbb{t_I}\longrightarrow \cF'= \lbrace (\cH',\nabla'), (\cH_R',\nabla_R'),\alpha'\rbrace /\bbk\dbb{\bt_J}\] is an unfolding. By \cref{remark:unfolding-framing}, 
    the framing for $(\cH_R,\nabla_R)$ produces a unique framing on $(\cH_R',\nabla_R')$. By \cref{lifting lemma}(2), this framing induces a framing on $(\tcH_R',\tnabla_R')$, thus a framing on the (T)-structure $(\cH',\nabla')_0$ under $\alpha'$.
    By construction, under $\beta\vert_{\bt_I=0}$, the framing constructed on $(\cH',\nabla')$ coincides with the initial framing of $(\cH,\nabla)$.
    We conclude from \cref{lemma:framing-T-structure-to-F-bundle} that it is a framing of F-bundle. This concludes the proof.
\end{proof}

\begin{theorem}[Unfolding of equivariant F-bundles]\label{thm:max-unfolding-arbitrary-rank}
    Let $\cF = \lbrace (\cH ,\nabla ) , (\cH_R,\nabla_R) ,\alpha \rbrace$ be an equivariant F-bundle over $\bbk\dbb{\bt_I}$, and fix $v\in \cH_R / (t_I,u)\cH_R$.
    \begin{enumerate}[wide]
        \item If $v$ satisfies (IC), (GC) and $\coker\mu_v$ is free, then $\cF$ admits a maximal unfolding with cyclic vector induced from $v$.
        \item If $v$ satisfies (GC'), then any two maximal unfoldings of $\cF$ with cyclic vector induced from $v$ are isomorphic under a unique isomorphism.
    \end{enumerate}
    Furthermore, any framing for $\cF$ induces a unique framing on a maximal unfolding.
\end{theorem}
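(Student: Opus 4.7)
The plan is to reduce to the finite-rank $R$-linear Hertling-Manin theorem (\cref{thm:hertling-manin-T-structure}) applied to the $R$-linear (T)-structure $(\cH_R,\nabla_R)$, and then to assemble the rest of the equivariant F-bundle around it using the rigidity of F-bundle extensions of (T)-structures admitting a framing, as encoded in \cref{lemma:lift-T-structure-morphism-to-F-bundle} and \cref{lemma:extend u-direction from T-structure}. The isomorphism $\alpha$ will be propagated through the functor of \cref{lifting lemma}, keeping the $R$-linear and $\bbk$-linear sides compatible on the nose.

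For existence (1), I first apply \cref{thm:hertling-manin-T-structure}(1) to the $R$-linear (T)-structure $(\cH_R,\nabla_R)/R\dbb{t_I}$ (of finite rank over a base with finitely many variables) with the given $v$. This produces a maximal unfolding of $R$-linear (T)-structures $\iota_R\colon (\cH_R,\nabla_R) \to (\cH_R',\nabla_R')/R\dbb{t_I,s_J}$ for a finite set $J$, with cyclic vector induced from $v$. Applying the functor of \cref{lifting lemma} yields a $\bbk$-linear (T)-structure unfolding $\widetilde{\iota}_R\colon (\tcH_R,\tnabla_R) \to (\tcH_R',\tnabla_R')$ over $\bbk\dbb{\bt_I,\bs_J}$. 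Since $(\cH_R',\nabla_R')$ admits a framing by \cref{lemma:weak-framing-t-directions}, \cref{lifting lemma}(2) lifts it to a framing of $(\tcH_R',\tnabla_R')$. The isomorphism $\alpha$ transports the F-bundle structure of $(\cH,\nabla)$ onto $(\tcH_R,\tnabla_R)$, and \cref{lemma:extend u-direction from T-structure} then produces a unique F-bundle structure on $(\tcH_R',\tnabla_R')$ extending it, so that $\widetilde{\iota}_R$ upgrades to an unfolding of $\bbk$-linear F-bundles $\iota\colon (\cH,\nabla) \to (\cH',\nabla')$. Packaging this with the tautological lift $\alpha'$ coming from the construction, $\cF' = \{(\cH',\nabla'),(\cH_R',\nabla_R'),\alpha'\}$ together with $(\iota,\iota_R)$ is the desired maximal unfolding; maximality is inherited from the $R$-linear (T)-structure by \cref{def:unfolding_of_equivariant_F-bundle}(3).

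For uniqueness (2), suppose $\cF \to \cF_k'$ for $k=1,2$ are two maximal unfoldings with cyclic vectors induced from $v$. \cref{thm:hertling-manin-T-structure}(2) applied to the underlying $R$-linear (T)-structure unfoldings yields a unique isomorphism $\beta_R$ of unfoldings on the $R$-linear side. Pushing this through \cref{lifting lemma} and then the lifts $\alpha_k'$ produces an isomorphism $\beta_0$ between the $\bbk$-linear (T)-structures underlying $(\cH_1',\nabla_1')$ and $(\cH_2',\nabla_2')$. Compatibility of $(\beta_0,\beta_R)$ with the common unfolding maps from $\cF$ forces $\beta_0|_{\bs_J=0}$ to be an isomorphism of F-bundles between the two fibers at $\bs_J=0$. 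Since $(\cH_2',\nabla_2')_0$ admits a framing (via \cref{lemma:weak-framing-t-directions} and \cref{lifting lemma}(2)), \cref{lemma:lift-T-structure-morphism-to-F-bundle}(2) then promotes $\beta_0$ to an isomorphism of $\bbk$-linear F-bundles $\beta$. The pair $(\beta,\beta_R)$ is the required isomorphism of equivariant F-bundles, and its uniqueness follows from \cref{remark:morphism-eq-F-bundle} combined with the uniqueness of $\beta_R$.

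The framing statement follows from \cref{prop:extension-framing-eq-F-bundle}. The main subtlety of the argument is not any new estimate but careful book-keeping: at every stage one must ensure that the $\bbk$-linear (T)-structure on the target is the honest pullback under $\psi_{\blambda}$ of its $R$-linear counterpart, which is precisely why the entire argument is routed through the functor of \cref{lifting lemma} rather than constructed ad hoc at each step.
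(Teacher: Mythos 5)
Your proposal is correct and follows essentially the same route as the paper's proof: apply \cref{thm:hertling-manin-T-structure} to the finite-rank $R$-linear (T)-structure, push the resulting unfolding through the functor of \cref{lifting lemma}, invoke the framing results to feed into \cref{lemma:extend u-direction from T-structure} for existence and \cref{lemma:lift-T-structure-morphism-to-F-bundle}(2) for uniqueness, and conclude the framing statement from \cref{prop:extension-framing-eq-F-bundle}. One small imprecision: when you invoke \cref{lemma:lift-T-structure-morphism-to-F-bundle}(2) you cite the framing on $(\cH_2',\nabla_2')_0$, but the hypothesis of that lemma concerns the \emph{source} of the morphism, i.e.\ $(\cH_1',\nabla_1')_0$; since both underlying $\bbk$-linear (T)-structures come from finite $R$-linear ones and hence admit framings, this is harmless but worth stating on the correct object.
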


\begin{proof}
    We prove (1). For the $R$-linear (T)-structures, there exists a maximal unfolding by \cref{thm:hertling-manin-T-structure}
    \[\beta_R\colon (\cH_R,\nabla_R)/R\dbb{t_I}\longrightarrow (\cH_R',\nabla_R')/R\dbb{t_J}.\]
    By functoriality, we obtain an unfolding of $\bbk$-linear (T)-structures \[\tbeta_R\circ\alpha\colon (\cH,\nabla)_0/\bbk\dbb{\bt_I} \longrightarrow (\tcH_R',\tnabla_R')/\bbk\dbb{\bt_J}.\] 
    By \cref{lemma:weak-framing-t-directions}, the $R$-linear (T)-structures admit framings.
    Those framings induce framings on the $\bbk$-linear (T)-structures by \cref{lifting lemma}(2).
    Hence we can apply \cref{lemma:extend u-direction from T-structure} to define an F-bundle structure $(\tcH_R',\tnabla_R'^F)$ extending the $\bbk$-linear (T)-structure $(\tcH_R',\tnabla_R')$, such that $\tbeta_R\circ\alpha$ becomes an unfolding of F-bundles.
    Then  $\lbrace (\tcH_R',\tnabla_R'^F),(\cH_R',\nabla_R'),\id\rbrace$ is an equivariant F-bundle and $(\tbeta_R\circ\alpha,\beta_R)$ is a maximal unfolding of $\cF$ with cyclic vector $v$.
    
    We now prove (2).
    For $k=1,2$, let \[(\beta_k ,\beta_{R,k})\colon \lbrace (\cH,\nabla ) , (\cH_R,\nabla_R) , \alpha\rbrace \rightarrow \lbrace (\cH_k,\nabla_k)  , (\cH_{R,k},\nabla_{R,k}) , \alpha_k\rbrace\] be two maximal unfoldings of equivariant F-bundles, with cyclic vectors $v_k\in \cH_{R,k}/(t_J , u)\cH_{R,k}$ induced from $v$.
    By \cref{thm:hertling-manin-T-structure}, there exists a unique isomorphism of $R$-linear (T)-structures \[\iso_R\colon (\cH_{R,1} ,\nabla_{R,1} ) \rightarrow (\cH_{R,2} ,\nabla_{R,2} )\] such that $\beta_{R,2} = \iso_R\circ\beta_{R,1}$. 
    This induces an isomorphism for the underlying $\bbk$-linear (T)-structures \[\iso\coloneqq \alpha_{2}^{-1}\circ \widetilde{\iso_R}\circ\alpha_1\colon (\cH_1,\nabla_1)\rightarrow (\cH_2,\nabla_2),\] and it satisfies $\beta_2 = \beta_1\circ\iso$. It suffices to show that $\iso$ is compatible with the $u$-direction. Since $\beta_k$ are unfoldings of F-bundles, they restrict to isomorphisms of F-bundles at $\bt_J = 0$.
    Hence $\iso$ is compatible with the $u$-direction at $\bt_J= 0$. Since the $\bbk$-linear (T)-structures come from finite $R$-linear (T)-structures, they admit framings.
    Then \cref{lemma:lift-T-structure-morphism-to-F-bundle}(2) implies that $\iso$ is an isomorphism of F-bundles.
    We conclude that $(\iso,\iso_R)$ is an isomorphism of equivariant F-bundles compatible with the unfoldings.
    This isomorphism is unique, since $(\iso , \iso_R)$ is uniquely determined by $\iso_R$.
    (2) is proved.  

    The last statement is a special case of \cref{prop:extension-framing-eq-F-bundle}.
    The theorem is proved.
\end{proof}

\section{Application to mirror symmetry of flag varieties}

In this section, we apply our equivariant unfolding theorem to obtain the big $D$-module mirror symmetry for flag varieties $G/P$ of general Lie type (\cref{thm:big-mirror-symmetry-flag}). 

We start with the $\bbk$-linear F-bundles given by the equivariant small quantum $D$-module for $G/P$ on the A-side, and another one by the equivariant Gauss-Manin system with respect to Rietsch's superpotential on the B-side (see \cite{Riet}).
Note that both F-bundles are of infinite rank, as the equivariant parameters are not yet included in the base ring.
Moreover, their $R$-linear (T)-structure lifts coincide with the $D$-module structures defined in \cite{chow2024dmodulemirrorconjectureflag}, and are thus isomorphic to each other as shown therein.  
We will construct a suitable unfolding on the B-side, and apply our equivariant unfolding theorem to deduce the isomorphism between the unfoldings on both sides.
We remark that in general, the classical cohomology of $G/P$ is not generated by the divisor classes and the small quantum cohomology is not semisimple, so that neither the unfolding in \cite{Hertling_Unfoldings_of_meromorphic_connections} nor the semisimple reconstruction in \cite{Teleman_2011} is directly applicable.

\subsection{\texorpdfstring{Equivariant F-bundles for $G/P$}{Equivariant F-bundles for G/P}}

\subsubsection{Equivariant quantum cohomology ring of $G/P$}

Let $G$ be a simply-connected complex simple Lie group, and $P$ be a  parabolic subgroup of $G$ containing a Borel subgroup $B\subset G$. 
Let $B_-$ denote the opposite Borel subgroup, and then $T\coloneqq B\cap B_-$ is a maximal torus of $G$.  
Let $\Delta=\{\alpha_1, \dots, \alpha_n\}$ be a basis of simple roots, and $\{\omega_1, \cdots, \omega_n\}$ be the fundamental weights. 
The Weyl group $W\coloneqq N_G(T)/T$ is generated by simple reflections $s_i\coloneqq s_{\alpha_i}$.
The Weyl subgroup $W_P$ of $P$ is generated by the simple reflections $s_\alpha$ with $\alpha\in \Delta_P\coloneqq\{\alpha_i\in \Delta \mid s_i P\subset P\}$. 
Let $\ell: W\to \mathbb{Z}_{\geq 0}$ denote the standard length function, and $w_0$ (resp.\ $w_P$) denote the longest element in $W$ (resp.\ $W_P$). 
Denote by $W^P\subset W$ the subset of minimal length representative of the cosets $W/W_P$.

The flag variety $X\coloneqq G/P$ is a Fano manifold. 
It parametrizes  partial flags (resp.\ isotropic partial flags) in a complex vector space when $G$ is of type A (resp.\ B, C, D). 
For each $w\in W^P$, there are Schubert varieties $X^w\coloneqq\overline{BwP/P}$ (resp.\ $X_w\coloneqq\overline{B_-wP/P}$) of (co)dimension $\ell(w)$ inside $X$.     
We have 
\[H^{\ast} (X, \mathbb{Z})=\bigoplus_{w\in W^P} \mathbb{Z}\PD ([X_w]) ,\]
where $\PD (\cdot )$ denotes the Poincar\'{e} dual,
 and 
 \[H_2(X, \mathbb{Z})=\bigoplus_{\alpha\in \Delta\setminus \Delta_P} \mathbb{Z}[X^{s_\alpha}] .\]
For each $w\in W^P$, the Schubert variety $X_w$ (resp.\ $X^w$) is invariant under the natural  $T$-action on   $X$, so that it defines a fundamental class in the $T$-equivariant Borel-Moore homology.
This class is identified with a $T$-equivariant cohomology class in $H^{2\ell(w)}_T(X, \bbC )$ (resp.\ $H^{2(\dim X-\ell(w))}_T(X,\bbC)$) denoted as $\sigma_w$ (resp.\ $\sigma^w$). 
The fundamental weights produce equivariant parameters for the $T$-action which we denote by $\lambda = (\lambda_1,\dots, \lambda_n)$.
We have identifications
\begin{align}
    H_T^{\ast}({\rm pt},\bbC) &=\bbC[\lambda_1, \ldots, \lambda_n]\eqqcolon\bbC[\lambda] ,\\
    H^{\ast}_T(X,\bbC ) &=\bigoplus_{w\in W^P}\bbC[\lambda]\sigma_w. \label{eq: R linear basis}
\end{align}
To be more precise, we view $\omega_i$ as a character in $\Hom(T, \bbC^{\ast})$, and denote by $\bbC_{-\omega_i}$  the one-dimensional representation of $T$ viewed a vector bundle over a point. Then we take  $\lambda_i\coloneqq c_1^T(\bbC_{-\omega_i})$ and consequently we have $\lambda_i=-\omega_i$.
We denote by $(\cdot ,\cdot )$ the equivariant Poincar\'{e} pairing on $H_T^{\ast} (X ,\bbC )$. 
The $\bbC[\lambda]$-bases $\{\sigma^w\}_w$ and $\{\sigma_w\}_w$ are dual with respect to the Poincar\'{e} pairing, i.e.\ $(\sigma_u , \sigma^v) = \delta_{u,v}$.
In the following, we denote by $\bbC (\lambda )$ the fraction field of $\bbC[\lambda ] = \bbC[\lambda_1,\dots, \lambda_n]$.

\begin{lemma}[\protect{\cite[Lemma 5.11]{BCMP}}]\label{lemma:G/PH2}
The localized equivariant cohomology of $X$, $H^*_T(X)\otimes_{\bbC[\lambda]} \bbC(\lambda)$ is generated by the element $\sum_{\alpha\in \Delta\setminus\Delta_P}\sigma_{s_\alpha}$ as a $\bbC(\lambda)$-algebra.
\end{lemma}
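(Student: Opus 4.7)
The plan is to invoke equivariant localization. Restriction to the $T$-fixed point set $X^T = \{wP : w \in W^P\}$ gives an injection of $\bbC[\lambda]$-modules
\[H^{\ast}_T(X,\bbC) \hookrightarrow \bigoplus_{w \in W^P} \bbC[\lambda],\]
which becomes a ring isomorphism after base change to $\bbC(\lambda)$, identifying the localized equivariant cohomology with a product of $|W^P|$ copies of the field $\bbC(\lambda)$. Under this identification, a single element $D$ generates as a $\bbC(\lambda)$-algebra if and only if its restrictions $D|_{wP}$ are pairwise distinct: this is a Vandermonde argument, since the powers $1, D, \dots, D^{|W^P|-1}$ are then linearly independent over $\bbC(\lambda)$ and therefore form a basis of the $|W^P|$-dimensional product.

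Second, I would compute $D|_{wP}$ explicitly for $D = \sum_{\alpha \in \Delta \setminus \Delta_P} \sigma_{s_\alpha}$. For each $\alpha \in \Delta \setminus \Delta_P$, the fundamental weight $\omega_\alpha$ is trivial on all coroots $\beta^\vee$ with $\beta \in \Delta_P$ and thus extends to a character of $P$, giving a line bundle $L_{\omega_\alpha}$ on $G/P$ whose equivariant Chern class satisfies $c_1^T(L_{\omega_\alpha})|_{wP} = -w(\omega_\alpha)$ under the paper's sign convention $\lambda_i = c_1^T(\bbC_{-\omega_i})$. Using the identity
\[\sigma_{s_\alpha} = c_1^T(L_{\omega_\alpha}) + \omega_\alpha \qquad \text{in } H^2_T(G/P, \bbC),\]
which can be verified by fixed-point restriction together with Billey's formula $\sigma_{s_\alpha}|_{s_\alpha P} = \alpha$ and the vanishing $\sigma_{s_\alpha}|_{wP}=0$ when $w \not\geq s_\alpha$ in Bruhat order, I obtain
\[D|_{wP} = \sum_{\alpha \in \Delta \setminus \Delta_P}\bigl(\omega_\alpha - w(\omega_\alpha)\bigr) = \rho_P - w(\rho_P), \qquad \rho_P \coloneqq \sum_{\alpha \in \Delta \setminus \Delta_P} \omega_\alpha.\]

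Finally, I would check that $w \mapsto w(\rho_P)$ separates $W^P$. Since $\langle \rho_P, \alpha_i^\vee \rangle$ equals $1$ for $\alpha_i \in \Delta \setminus \Delta_P$ and $0$ for $\alpha_i \in \Delta_P$, the element $\rho_P$ lies in the interior of the $W_P$-fixed fundamental chamber, so $\mathrm{Stab}_W(\rho_P) = W_P$. Therefore $w_1(\rho_P) = w_2(\rho_P)$ with $w_1, w_2 \in W^P$ forces $w_1^{-1}w_2 \in W_P$, hence $w_1 = w_2$ by minimality of coset representatives; the values $D|_{wP}$ are consequently pairwise distinct and the Vandermonde argument from the first step concludes the proof. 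The main subtlety is pinning down the divisor formula $\sigma_{s_\alpha} = c_1^T(L_{\omega_\alpha}) + \omega_\alpha$, since the signs and additive constants there are sensitive to the conventions for Schubert varieties ($B$- versus $B_-$-orbits) and for equivariant Chern classes; once these are matched against the paper's conventions $X_w = \overline{B_- wP/P}$ and $\lambda_i = c_1^T(\bbC_{-\omega_i})$, everything else is routine.
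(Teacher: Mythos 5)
Your proof is correct. Note that the paper does not supply its own argument for this lemma; it simply cites \cite[Lemma 5.11]{BCMP}, so there is no internal proof to compare against, but your argument is the standard and expected one. The ingredients all check out: $G/P$ is equivariantly formal with finite fixed-point set $X^T = \{wP : w\in W^P\}$, so Atiyah--Bott/GKM localization identifies $H^*_T(X)\otimes_{\bbC[\lambda]}\bbC(\lambda)$ with the product $\prod_{w\in W^P}\bbC(\lambda)$; for a finite product of fields a single element generates as an algebra iff its coordinates are pairwise distinct (the Vandermonde step); the divisor restriction $\sigma_{s_\alpha}\vert_{wP} = \omega_\alpha - w(\omega_\alpha)$ is consistent with the paper's own \textbf{Remark} following the definition of $\cF^{A,\bigquantum}$, which records $c_1^T(L_j) = \sigma_{s_{i_j}} - \omega_{i_j}$ for $L_j = G\times_P\bbC_{-\omega_{i_j}}$; summing gives $D\vert_{wP} = \rho_P - w(\rho_P)$; and $\langle\rho_P,\alpha^\vee\rangle = 1$ on $\Delta\setminus\Delta_P$ and $0$ on $\Delta_P$ forces $\mathrm{Stab}_W(\rho_P) = W_P$, so $w\mapsto w(\rho_P)$ is injective on the minimal coset representatives $W^P$. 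This is a clean, self-contained proof that fills in a citation the paper leaves to the reference.
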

 \begin{remark}
     The above lemma shows that $H^{\ast}_T(X,\bbC)$ is generated by $H_T^2(X,\bbC)$ after localization. 
     This also follows from \cite[Lemma 4.1.3]{ciocan-fontanine_abeliannonabelian_2008}, and can be generalized to any smooth projective variety admitting a torus action with finitely many attractive torus-fixed points by  \cite[Lemma 1]{ACT-finitenessQK}. 
 \end{remark}

Let $\overline{\mathcal{M}}_{0, m}(X, d)$ denote 
the moduli space of $m$-pointed stable maps to $X$  of genus zero and  degree $d\in H_2(X, \mathbb{Z})$,
and   ${\rm ev}_i\colon \overline{\mathcal{M}}_{0, m}(X, d)\to X$ denote the $i$-th  $T$-equivariant  evaluation map. 
The moduli space $\overline{\mathcal{M}}_{0,m} (X,d)$ carries a $T$-action, and has a $T$-equivariant virtual fundamental class $[\overline{\cM}_{0,m} (X,d)]^{\vir}$.
For $\gamma_1, \dots, \gamma_m\in H^{\ast}_T(X,\bbC)$, we have the genus-zero, 
$m$-point equivariant Gromov-Witten
invariant
\begin{equation}
    \langle \gamma_1, \dots, \gamma_m\rangle_{d}\coloneqq\int_{[\overline{\mathcal{M}}_{0,m}(X, d)]^{\vir}} {\rm ev}_1^*(\gamma_1)\cup\dots \cup {\rm ev}_m^*(\gamma_m) \in \bbC[\lambda].
\end{equation} 

We introduce the necessary choices of bases, and associated coordinates, in order to define the equivariant big quantum cohomology ring of $X$.
Write $\Delta\setminus \Delta_P=\{\alpha_{i_1}, \dots, \alpha_{i_r}\}$ and 
$W^P=\{v_1, \cdots, v_N\}$ with $v_j=s_{{i_j}}$ for $1\leq j\leq r$.
We introduce Novikov variables $q= (q_1,\dots, q_r )$ corresponding to the basis $\lbrace [X^{s_{\alpha}}] \mid \alpha\in\Delta\setminus\Delta_P\rbrace$ of $H_2 (X,\bbZ )$. 
 For $d\in H_2(X, \mathbb{Z})$, we have $d=\sum_j d_j[X^{s_{ {i_j}}}]$ and denote $q^d:=\prod_{j=1}^rq_j^{d_j}$. We use $\{\tau_i\}$ for the $\bbC[\lambda]$-linear   coordinates of $H^*_T(X)$, whose elements are of the form $\alpha=\sum_{i=1}^N\tau_i \sigma_{v_i}$.

As a module, the equivariant big quantum cohomology ring is
\[\QH_T^{\ast, \rm big}(X) \coloneqq H_T^{\ast}(X,\bbC )\otimes_{\mathbb{C}}\mathbb{C}[q]\dbb{\tau}. \] 
It encodes all genus zero Gromov-Witten invariants in the quantum product $\star_{\tau}^{\bigquantum}$,  defined by  
   \begin{align*}\label{eq:quantum_product}
\sigma_v \star_{\tau}^{\bigquantum}\sigma_w &= \sum_{\eta\in W^P} \sum_{m\ge 0}\sum_{i_1,\dots,i_m} \sum_{d\in H_2(X, \mathbb{Z})}   \frac{\tau_{i_1}\cdots \tau_{i_m}}{m!}  \braket{\sigma_v, \sigma_w, \sigma^\eta, \sigma_{v_{i_1}}, \cdots, \sigma_{v_{i_m}}}_d q^d  \sigma_\eta.
\end{align*}
Here  the coefficient of $\tau_{i_1}\cdots \tau_{i_m}$ is   indeed a polynomial in $q$ since $X$ is Fano.

 Denote $\tilde q_j:=q_je^{\tau_j}$ and $\tilde q^d:=\prod_j \tilde q_j^{d_j}$ . Letting $\tau_i=0$ for all $i>r$ and using the divisor axiom for Gromov-Witten invariants, we obtain the equivariant small quantum cohomology ring 
\[\QH^*_T(X )=H^*_T(X,\bbC )\otimes_{\mathbb{C}} \bbC[\tilde q] \quad\mbox{with}\quad \sigma_v \star  \sigma_w = \sum\nolimits_{\eta, d}   \braket{\sigma_v, \sigma_w, \sigma^\eta}_d  \tilde q^d \sigma_\eta .
\] 
The next lemma follows directly from \cref{lemma:G/PH2} and \cite[Lemma 2.1]{SiTi}.
\begin{lemma}\label{lemma:QHG/PH2}
The localized equivariant small quantum cohomology of $X$,    $\QH^*_T(X)\otimes_{\bbC[\lambda]} \bbC(\lambda)$ is generated by $\{\sigma_{s_\alpha}\mid \alpha\in \Delta\setminus\Delta_P\}$ as a $\bbC(\lambda)[\tilde q]$-algebra.
\end{lemma}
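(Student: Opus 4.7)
The plan is a Siebert-Tian-style lift of generators from classical to quantum cohomology, using \cref{lemma:G/PH2} as input. Let $A \subseteq \QH^*_T(X)\otimes_{\bbC[\lambda]}\bbC(\lambda)$ denote the $\bbC(\lambda)[\tilde q]$-subalgebra generated by $\{\sigma_{s_\alpha}\mid\alpha\in\Delta\setminus\Delta_P\}$, and set $\eta\coloneqq\sum_{\alpha\in\Delta\setminus\Delta_P}\sigma_{s_\alpha}\in A$. Since $\{\sigma_v\}_{v\in W^P}$ is a $\bbC(\lambda)[\tilde q]$-basis of $\QH^*_T(X)\otimes_{\bbC[\lambda]}\bbC(\lambda)$, it is enough to show $\sigma_v\in A$ for every $v\in W^P$.

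I would equip $\QH^*_T(X)\otimes\bbC(\lambda)$ with the $\bbZ$-grading given by $\deg\sigma_w=2\ell(w)$, $\deg\lambda_i=2$, and $\deg\tilde q_j=2c_1(X)\cdot[X^{s_{i_j}}]$. The quantum product $\star$ preserves this grading, and the key point is that $\deg\tilde q_j>0$ since $X=G/P$ is Fano. Fix $v\in W^P$ and write $d\coloneqq 2\ell(v)$. For any homogeneous element of degree $d$, only finitely many $\tilde q$-monomials $\tilde q^{d'}$ can appear, since the weight $w(d')\coloneqq\deg\tilde q^{d'}\geq 0$ must satisfy $w(d')\leq d$. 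This finiteness is the termination ingredient for the induction.

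I would construct inductively homogeneous $P_k\in A$ of degree $d$ such that $\sigma_v-P_k$ is supported on $\tilde q$-monomials of weight $>k$. For $k=0$, \cref{lemma:G/PH2} gives a polynomial $Q_0\in\bbC(\lambda)[X]$ with $Q_0(\eta;\cup)=\sigma_v$ in $H^*_T(X)\otimes\bbC(\lambda)$; set $P_0\coloneqq Q_0(\eta;\star)\in A$. Because $\star\equiv\cup\pmod{(\tilde q)}$, the difference $\sigma_v-P_0$ vanishes modulo $(\tilde q)$, as required. For the step $k\to k+1$, write $\gamma\coloneqq\sigma_v-P_k$ and, for each $d''$ with $w(d'')=k+1$, apply \cref{lemma:G/PH2} to the homogeneous coefficient $\gamma_{d''}\in H^*_T(X)\otimes\bbC(\lambda)$ to obtain $\gamma_{d''}=Q_{d''}(\eta;\cup)$; then set
\[P_{k+1}\coloneqq P_k+\sum_{w(d'')=k+1}Q_{d''}(\eta;\star)\,\tilde q^{d''}\in A.\]
Since $Q_{d''}(\eta;\star)-Q_{d''}(\eta;\cup)$ involves only $\tilde q$-monomials of positive weight, the correction introduced by passing from $\cup$ to $\star$ only affects $\tilde q$-monomials of weight strictly greater than $k+1$, so $\sigma_v-P_{k+1}$ is supported on $\tilde q$-monomials of weight $>k+1$.

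Since both $\sigma_v$ and $P_k$ are homogeneous of degree $d$, all their $\tilde q$-monomials have weight $\leq d$. For $k\geq d$ the support condition thus forces $\sigma_v-P_k=0$, and hence $\sigma_v=P_k\in A$. The main substantive point is the termination argument, which relies essentially on the Fano positivity $\deg\tilde q_j>0$; the remainder is formal bookkeeping together with \cref{lemma:G/PH2} as the classical input.
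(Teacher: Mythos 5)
Your approach is the one the paper has in mind: the paper's proof of this lemma is a single sentence citing \cref{lemma:G/PH2} together with Siebert--Tian's Lemma 2.1, and that reference is precisely the lift of classical generators to quantum generators via Fano positivity of $\deg\tilde q_j$ that you reconstruct. So you have identified the right ingredients, and the iterative construction of the $P_k$ is the correct skeleton.

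The termination step, however, has a genuine gap. You claim that a homogeneous degree-$d$ element of $\QH^*_T(X)\otimes_{\bbC[\lambda]}\bbC(\lambda)$ can only involve $\tilde q$-monomials of weight $\leq d$. This is false once the coefficient ring is $\bbC(\lambda)$. With $\deg\lambda_i=2$, the field $\bbC(\lambda)$ carries a $\bbZ$-grading that is unbounded below (e.g.\ $\lambda_1^{-k}$ has degree $-2k$), so a homogeneous degree-$d$ element can contain a term $c\,\tilde q^{d'}\sigma_w$ with $\deg c=d-w(d')-2\ell(w)$ negative and $w(d')$ arbitrarily large. Such coefficients genuinely arise in your construction: \cref{lemma:G/PH2} is a statement over $\bbC(\lambda)$, not over $\bbC[\lambda]$, because writing $\sigma_w$ as a polynomial in $\eta$ requires inverting the determinant of the change-of-basis matrix from $\{\sigma_w\}$ to $\{\eta^{\cup m}\}_{m<|W^P|}$, a nonconstant element of $\bbC[\lambda]$. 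Hence $Q_0$ and each $Q_{d''}$ have honest $\bbC(\lambda)$-coefficients, the purported bound $w(d')\leq d$ on the support of $\sigma_v-P_k$ fails, and in fact if one tracks the construction the $\tilde q$-support of $\sigma_v-P_k$ sits in a window of the form $(k,k+C]$ for a constant $C$ depending on the rank, so the window slides upward rather than shrinking. The iteration therefore does not stabilize at step $k=d$ as you claim.

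This difficulty is substantive, not cosmetic: the Siebert--Tian degree bound is exactly the statement that the coefficient ring is concentrated in non-negative degree, which holds for $\bbC$ and for $\bbC[\lambda]$ but not for $\bbC(\lambda)$. A correct version has to control denominators explicitly, e.g.\ by working over $\bbC[\lambda]$, clearing denominators with a fixed homogeneous $g\in\bbC[\lambda]$, establishing $g^k\sigma_v\in B+(\tilde q)^k\QH^*_T(X)$ for the un-localized subalgebra $B$, and then comparing the growth $k\deg g$ in $\lambda$-degree to the growth in $\tilde q$-weight. Your write-up makes no such comparison, so as written the termination step is incorrect and the proof does not close.
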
 
\begin{remark}
    By further taking the nonequivariant limit $\lambda=0$, we obtain the small quantum cohomology $\QH^*(X)$, which could be non-semisimple. 
    For instance for $G$ of type $C_n$ and $\Delta_P=\Delta\setminus \{\alpha_2\}$, we obtain the isotropic Grassmannian
 $\SG(2, 2n)=\{V\leq \mathbb{C}^{2n}\mid \dim V=2, \Omega(V, V)=0\}$, where $\Omega$ is a symplectic form on $\mathbb{C}^{2n}$.
 It is shown in \cite{ChPe} that $\QH^{\ast}(\SG(2, 2n))$ is not semisimple. 
 It is easy to see that  $\QH^{\ast}(\SG(2, 2n))$ is not generated by $H^2(\SG(2, 2n),\bbC)$ either. 
\end{remark}
 
\subsubsection{Equivariant F-bundle structures for G/P}\label{subsection: Equivariant F-bundle structures of G/P}
We recall that $\tau=(\tau_1, \cdots, \tau_N)$ are the $\bbC[\lambda]$ coordinates of $H^*_T(X)$ dual to the standard basis we chose, $q=(q_1, \cdots, q_r)$ are the Novikov variables and $\lambda=(\lambda_1, \cdots, \lambda_n)$ are the equivariant variables. For $k= (k_1,\dots ,k_n)\in\bbN^n$, we set $\lambda^k\coloneqq \prod_{i=1}^n \lambda_i^{k_i}$ and $\vert k\vert \coloneqq \sum_{i=1}^n k_i$. It is expected but remains unsolved in general that the big quantum cohomology is convergent around $\tau=0$. 
Therefore we work on the formal neighborhood of $\tau = 0$. 

Let $\bbk\coloneqq \mathbb{C}(q)$ be the fraction field of $\bbC[q]$, and let  $R\coloneqq\bbk[\lambda]$. 
We fix the $\bbk$-basis $\blambda = (\lambda^k , k\in\bbN^n)$ of $R$.
We obtain $\bbk$-linear coordinates $\btau  = \lbrace \tau_{i,k}  ,1\leq i\leq N , k\in\bbN^n\rbrace$ on $H_T^{\ast} (X,\bbk)$ associated to the $\bbk$-basis $(\sigma_{v_i}\lambda^k, 1\leq i\leq N, k\in \bbN^n)$. There is a continuous morphism of $R$-algebras 
\[\psi_{\blambda} : R\dbb{\tau}\rightarrow R\dbb{\btau}, \tau_i \mapsto \sum_{k\in \bbN^n}\lambda^k\tau_{i,k}.\]

We define a $\bbk$-linear equivariant F-bundle  equivariant F-bundle 
\[\cF^{A,\bigquantum} \coloneqq \{(\cH^{A, {\rm big}}, \nabla^{A, {\rm big}}), (\cH_R^{A, {\rm big}}, \nabla_R^{A, {\rm big}}), \alpha\} / \bbk\dbb{\btau}. \] associated to the equivariant big quantum cohomology as follows.
The $R$-linear (T)-structure $(\cH_R^{A, {\rm big}}, \nabla_R^{A, {\rm big}})$ is given by the $R\dbb{\tau, u}$-module 
\begin{align*}
    \cH_R^{A,{\rm big}}& =H^{\ast}_T(X,\bbk)\otimes_R   R\dbb{\tau, u},\\
     \nabla_{R, {\partial_{\tau_{j}}}}^{A, {\rm big}}&=\partial_{\tau_{j}}+ u^{-1}\big((\sigma_{v_j}+\lambda_{i_j}) \star_{\tau}^{\bigquantum}
\big) ,
\end{align*}
where $1\leq j\leq N$ and we set $\lambda_{i_j}=0 $ for $j>r$. Here $i_{j}$ are the index of  $\Delta\setminus \Delta_P=\{\alpha_{i_1}, \dots, \alpha_{i_r}\}$.
The $\bbk$-linear F-bundle $(\cH^{A,\mathrm{big}}, \nabla^{A,\mathrm{big}})$ has underlying $\bbk\llbracket \btau, u \rrbracket$-module
\[
\cH^{A,\mathrm{big}} = H^*_T(X, \bbk) \otimes_{\bbk} \bbk\llbracket \btau, u \rrbracket,
\]
and the connection $\nabla^{A,\mathrm{big}}$ is specified by:
\[
\begin{aligned}
\nabla_{\partial_{\tau_{j,k}}}^{A,\mathrm{big}} &= \partial_{\tau_{j,k}} + u^{-1} \big( \lambda^k (\sigma_{v_j} + \lambda_{i_j}) \star_{\tau}^{\mathrm{big}} \big), \\
\nabla_{u\partial_u}^{A,\mathrm{big}} &= \Gr^{A,\bigquantum} - \nabla_{E^{\mathrm{big}}_A}^{A,\mathrm{big}} .
\end{aligned}
\]
Here, 
\[\Gr^{A,\bigquantum}=u\partial_u + E^{\mathrm{big}}_A+ \mu_{A},\]
where $\mu_A$ is the $\bbk\dbb{\btau,y}$-linear grading operator on the fiber $H^*_T(X, \bbk)$ linear defined by
\[\mu_A(\lambda^k \sigma_{v_i}) = \big( \ell(v_i) + |k| \big) \lambda^k \sigma_{v_i} ,
\] 
and $E^{\mathrm{big}}_A$ is the Euler vector field measuring degree on the base $\bbk\dbb{\btau}$, given by
    \[
    E^{\mathrm{big}}_A = \sum_{1 \leq j \leq r} \frac{\deg(q_j)}{2} \partial_{\tau_{j,0}} + \sum_{\substack{1 \leq j \leq N \\ k \in \mathbb{N}^n}} (1 - \ell(v_j) - |k|) \tau_{j,k} \partial_{\tau_{j,k}},
    \]
    where $\ell(v_j)=1$ for $1\leq j\leq r$ and the degree $\deg(q_j)$ is defined as
    \[
    \deg(q_j) \coloneqq 2 \int_{[X^{s_{i_j}}]} c_1(T_{G/P}).
    \]
Under the change of variables $R\dbb{\tau}\rightarrow R\dbb{\btau}$, $\tau_i\mapsto \sum_{k\in\bbN^n} \lambda^k \tau_{i,k}$, the data $\lbrace (\cH_R^{A,\bigquantum} , \nabla_R^{A,\bigquantum}) , \id\rbrace$ provides an $R$-linear lift of the underlying (T)-structure $(\cH^{A,\bigquantum} , \nabla^{A,\bigquantum})_0$.
We obtain the $A$-model big equivariant F-bundle
\[\cF^{A,\bigquantum} = \{(\cH^{A, {\rm big}}, \nabla^{A, {\rm big}}), (\cH_R^{A, {\rm big}}, \nabla_R^{A, {\rm big}}), \id\} / \bbk\dbb{\btau}.\]

\begin{remark}

  For $1\leq j\leq r$, consider the line bundle $L_j=G \times_P \bbC_{-\omega_{i_j}}$ over $G/P$. 
  Since $c_1^T(L_j)= \sigma_{s_{i_j}}-\omega_{i_j}=\sigma_{v_j}+\lambda_{i_j}$,
  we can write $\nabla_{R,\partial_{\tau_j}}^{A,\bigquantum} = \partial_{\tau_j} + u^{-1}c_1^T (L_j)\star_{\tau}^{\bigquantum}$. For $j>r, \nabla_{R,\partial_{\tau_j}}^{A,\bigquantum}$ are not weighted.
\end{remark}

\begin{remark}
\label{remark:flatness-A-model}
The flatness   of $\nabla^{A, {\rm big}}$ in the $u$-direction follows from a similar argument to that in   \cite[Section 3.2]{Coates_Hodge-theoretic_mirror_symmetry_for_toric_stacks} 
.\end{remark}

By restricting to the small locus $\tau_j = 0$ for $(\cH^{A, {\rm big}}, \nabla^{A, {\rm big}})$, and $\tau_{j,k} = 0$ for $(\cH_R^{A, {\rm big}}, \nabla_R^{A, {\rm big}})$ when $j>r$, we obtain the $A$-model small equivariant F-bundle
\[\cF^{A}\coloneqq \lbrace (\cH^{A}, \nabla^{A}), (\cH_R^{A}, \nabla_R^{A}), \id \rbrace / \bbk\dbb{\btau_{\leq r}} ,\]
where $\btau_{\leq r} = \lbrace \tau_{i,k} , 1\leq i\leq r, k\in\bbN^n\rbrace$ parametrizes the $\bbk$-linear F-bundle, and $\tau_{\leq r} = \lbrace \tau_i ,1\leq i\leq r\rbrace$ parametrizes the $R$-linear (T)-structure.

The quotient maps $\bbk\dbb{\btau}\rightarrow \bbk\dbb{\btau_{\leq r}}$ and $R\dbb{\tau}\rightarrow R\dbb{\tau_{\leq r}}$ together with the natural identification of the fibers produce an unfolding of the equivariant F-bundle $\iota\colon \cF^{A}\rightarrow \cF^{A,\bigquantum}$.

\begin{proposition}\label{prop: A model maximal}
The morphism $\iota\colon \cF^{A}\rightarrow \cF^{A,\bigquantum}$ is a maximal unfolding of $\bbk$-linear equivariant F-bundles, with cyclic vector given by $1\in H_T^{\ast}(X,\bbk)$.
\end{proposition}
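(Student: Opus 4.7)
The plan is to unpack \cref{def:unfolding_of_equivariant_F-bundle} and the definition of maximality of equivariant F-bundles, and verify each item in turn; the whole proof will reduce to a short linear-algebra check on the residue of the quantum connection in the underlying $R$-linear (T)-structure.

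First I would verify that $\iota$ is an unfolding of equivariant F-bundles. On the $\bbk$-linear F-bundle side, $\iota$ is induced by the quotient $\bbk\dbb{\btau} \twoheadrightarrow \bbk\dbb{\btau_{\leq r}}$ by the closure of the ideal $(\tau_{j,k}\,:\,j > r,\,k \in \bbN^n)$; on the $R$-linear (T)-structure side, by $R\dbb{\tau} \twoheadrightarrow R\dbb{\tau_{\leq r}}$ by $(\tau_j\,:\,j > r)$. Setting these variables to zero in the connection formulas for $\nabla^{A,\bigquantum}$ and $\nabla_R^{A,\bigquantum}$ recovers $\nabla^A$ and $\nabla_R^A$, so the bundle maps are the required isomorphisms of pullbacks, and the compatibility square \eqref{cd:morphism-eq-F-bundle} commutes trivially because $\alpha = \id$ on both sides.

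Next, by the definition of maximality for equivariant F-bundles, it suffices to show that $(\cH_R^{A,\bigquantum},\nabla_R^{A,\bigquantum})$ is maximal with cyclic vector $1$. The fiber $\cH_R^{A,\bigquantum}/(\tau,u)$ is the free $R$-module $H_T^*(X,\bbk) = \bigoplus_{j=1}^N R\sigma_{v_j}$, and reading off the residue of $\nabla_R^{A,\bigquantum}$ at the origin applied to $1 = \sigma_e$ yields
\[
\mu_1 \colon \bigoplus_{j=1}^N R\partial_{\tau_j} \longrightarrow H_T^*(X,\bbk), \qquad \partial_{\tau_j} \longmapsto \sigma_{v_j} + \lambda_{i_j}\sigma_e,
\]
with the convention $\lambda_{i_j} = 0$ for $j > r$, since quantum product with $1$ is the identity.

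Since no simple reflection equals the identity of $W$, the unique index $j_0$ with $v_{j_0} = e$ must satisfy $j_0 > r$. The matrix of $\mu_1$ in the bases $(\partial_{\tau_j})_j$ and $(\sigma_{v_j})_j$ is therefore the $N \times N$ identity plus the off-diagonal entries $\lambda_{i_j}$ at positions $(j_0, j)$ for $1 \leq j \leq r$; it is unipotent, of determinant $1$, and hence invertible over $R$. This gives maximality, and the cyclic vector $1$ of $\cH_R^{A,\bigquantum}$ is induced from $1 \in \cH_R^A|_{\tau_{\leq r}=u=0}$ via the natural fiber identification built into $\iota_R$. No substantive obstacle is anticipated; the only real content is the bookkeeping of the $\lambda$-correction terms in the equivariant quantum connection and the observation that the row $j_0$ in which they appear is disjoint from the columns $j \leq r$ that carry them.
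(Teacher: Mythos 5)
Your proof is correct and follows the same approach as the paper: reduce maximality of the equivariant F-bundle to maximality of the $R$-linear (T)-structure, compute $\mu_1(\partial_{\tau_j}) = \sigma_{v_j}+\lambda_{i_j}$, and conclude that $\mu_1$ is an $R$-isomorphism. Your explicit unipotent/determinant argument for why the $\lambda_{i_j}$ correction terms preserve the basis property (using that the identity element $e \in W^P$ is indexed by some $j_0 > r$, disjoint from the columns $j \le r$ carrying the corrections) usefully spells out a step the paper leaves implicit.
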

\begin{proof}
  We have already proven the morphism $\iota:\cH^A\rightarrow \cF^{A,\bigquantum}$ is an unfolding, and it remains to check that $(\cH^A_R,\nabla^A_R)$ is maximal. 
  We take the cyclic vector $v=1\in \cH_{R}^{A,\bigquantum}|_{\tau=u=0}=H^*_T(X,\bbk)$. 
  The $R$-linear evaluation map is the 
  \begin{align*}
      \mu_{v=1}\colon \bigoplus_{j=1}^N R\partial_{\tau_j} &\longrightarrow H_T^{\ast} (X,\bbk)\\
      \partial_{\tau_j} &\longmapsto \nabla_{u\partial_{\tau_j}}|_{\tau=u=0}(1) = \sigma_{v_j}+\lambda_{i_j}.
  \end{align*}
  Since $\{\sigma_{v_i}\}_i$ is an $R=\bbk[\lambda]$ basis of $H^\ast_T(X,\bbk)$ by equation (\ref{eq: R linear basis}), $\mu_{v=1}$ is an $R$-isomorphism and we conclude that $\iota$ is maximal unfolding. 
\end{proof}

\subsection{\texorpdfstring{The small $D$-module mirror symmetry for $G/P$}{The small D-module mirror symmetry for G/P}} \label{subsec:small-B-model}
In this section, we review the $B$-side of mirror symmetry for for $G/P$ as in \cite{Riet}, construct a $R$-linear (T)-structure $(\cH_R^B,\nabla_R^B)$ from the Gauss-Manin connection, and state the small mirror symmetry as in \cite{chow2024dmodulemirrorconjectureflag}.

\subsubsection{Small $D$-module mirror symmetry}
Let $G^\vee$ be the Langlands dual group of $G$, and $T^\vee$, $B^\vee$, $P^\vee$ be the Langlands dual of $T, B, P$ respectively.
Rietsch's equivariant mirror superpotential is a triple $(X^{\vee}_{P}, \mathcal{W}, p)$. Here $X^\vee_P$ is a subvariety of $G^\vee\times Z$ isomorphic to 
\[\big((G^\vee/P^\vee)\setminus -K_{G^\vee/P^\vee} \big)\times \mbox{Spec } \mathbb{C}[\tilde q_{i}^{\pm 1}|\alpha_i\in \Delta\backslash \Delta_P ],\]
where $Z$ is the center of the Levi subgroup of $P^\vee$, $-K_{G^\vee/P^\vee}$ is the anti-canonical divisor of the dual partial flag variety ${G^\vee/P^\vee}$ given in \cite{KLS}. The holomorphic function
$\mathcal{W}: X^\vee_P\to \mathbb{C}$ is the non-equivariant mirror superpotential of $G/P$, and $p: X^\vee_P\to T^\vee$ is a morphism which gives information on the equivariant part of Rietsch's original mirror superpotential $\mathcal{W}+\ln \phi( ; h)$ (see \cite{Riet}).

  Denote by $\Omega^i(X^\vee_P/Z)$ the space of holomorphic $i$-forms over $X^\vee_P$ with respect to \[Z\cong\mbox{Spec } \mathbb{C}\big[\tilde q_{i}^{\pm 1}\ \big|\ \alpha_i\in \Delta\backslash \Delta_P \big]\] via the aforementioned isomorphism. Identify the Lie algebra $\mathfrak{t}^\vee=\mbox{Lie}(T^\vee)$ with $\mathfrak{t}^*$.
Let $\{(\lambda_i)^{*}\}\subset (\mathfrak{t}^\vee)^*$ be the dual base of $\{\lambda_i\}\subset \mathfrak{t}^\vee$, and 
${\rm mc}_{T^\vee}\in \Omega^1(T^\vee; \mathfrak{t}^\vee)$ denote the
Maurer-Cartan form of $T^\vee$.

In \cite{chow2024dmodulemirrorconjectureflag}, the $B$-model $D$-module $\big(G_0(X_P^{\vee}, \mathcal{W}, p),\nabla\big)$ consists of a $\bbC[\lambda,u]
[\tilde{q}_i^{\pm 1}]$-module defined by
\begin{align*}
     G_0(X_P^{\vee}, \mathcal{W}, p)&= \mbox{coker} \big(\bbC[\lambda, u]\otimes_\bbC \Omega^{{\rm top}-1}(X^\vee_P/Z)\overset{\partial}{\longrightarrow} \bbC[\lambda,u]\otimes_\bbC \Omega^{\rm top}(X^\vee_P/Z)\big),\\ 
     \partial & = 1\otimes \big(ud + d\mathcal{W}\wedge  -\sum_{j=1}^n  \lambda_j  (p^*\langle (\lambda_j)^{*}, {\rm mc}_{T^\vee}\rangle)\wedge \big),
\end{align*}
It is equipped with a meromorphic connection having a logarithmic pole in the $\tilde{q_i}$-direction
\[\nabla_{  \partial_{\tilde{q}_i}}([\omega]) = 
\big[\mathcal{L}_{\partial_{\tilde{q}_{i}}} (\omega)+ u^{-1}\frac{\partial \mathcal{W}}{\partial \tilde{q}_{i} } \omega  -u^{-1}\sum_{j=1}^n \lambda_j \big(\iota_{\partial_{\tilde{q}_{i}}} p^{\ast} \big\langle (\lambda_j)^{\ast}, {\rm mc}_{T^{\vee}}  \big\rangle\big)\omega \big].
\]
Here $p^{\ast} \big\langle (\lambda_j)^{\ast}, {\rm mc}_{T^{\vee}}  \big\rangle \in \Omega^1(X^{\vee}_{P}/Z)$, and $\partial, \nabla$ are linear on $\mathbb{C}[\lambda, u]$.

\remark{For any $\omega\in \bbC[\lambda,u]\otimes \Omega^{\rm top}(X^\vee_P/Z)$, $\omega=g \omega_{0}$ for some $g\in \mathcal{O}(X^{\vee}_{P})$ and $\omega_{0}\in \Omega^{\rm top}( (G^\vee/P^\vee)\setminus -K_{G^\vee/P^\vee} )$ }. We have the Lie derivative $ \mathcal{L}_{\partial_{ {\tilde{q}_{i}}}}(\omega)=\frac{\partial g}{\partial \tilde{q_{i}}}\omega_{0} .$

The small quantum $D$-module mirror symmetry holds for $G/P$ in the following sense.
\begin{proposition}[\protect{\cite[Theorem 1.2]{chow2024dmodulemirrorconjectureflag}}]\label{prop: iso_small_Dmodule current version}
    There exists a unique $\bbC[\lambda,u][\tilde{q_i}^{\pm 1}]$-linear map
    \begin{equation}\label{eq:chow-mirror-map}
        \Phi_{\rm mir}: G_0(X^{\vee}_{P}, \mathcal{W},  p)\longrightarrow \QH_{T}^{\ast}(G/P)[u, \tilde q_1^{-1},\cdots, \tilde q_r^{-1}] 
    \end{equation}
    satisfying the following:
    \begin{enumerate}[wide]
        \item $\Phi_{\rm mir}$ is bijective, and preserves the connection, 
        \item $\Phi_{\rm mir}([\Omega])=1$, where $\Omega$ is the unique (up to sign) volumn form in $\Omega^\top (X_P^\vee/Z)$, whose restrictions to every torus chart of $(G^\vee/P^\vee)\setminus -K_{G^\vee/P^\vee}$ is equal to the standard volumn form $\pm dz_1\wedge \dots \wedge dz_K/z_1\dots z_K,$
        \item  at the semi-classical limit, we have  a ring isomorphism
        \begin{equation} \label{eq:jacobi-isomorphism}
            \Phi_{\rm mir}^{u=0}: {\rm Jac}(X^{\vee}_{P}, \mathcal{W},  p)\overset{\cong}{\longrightarrow} \QH_{T}^{\ast}(G/P)[\tilde q_1^{-1},\cdots, \tilde q_r^{-1}],
        \end{equation}
        \item $\Phi_{\rm mir}$  intertwines the shift operators (see \cite[Sections 3.3 and 4.5]{chow2024dmodulemirrorconjectureflag}), and
        \item $\Phi_{\mir}$ preserves the $\mathbb{Z}$-grading.
    \end{enumerate}
\end{proposition}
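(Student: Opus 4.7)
The plan is to build $\Phi_{\mir}$ in two stages: first as a ring isomorphism at $u=0$ between the Jacobi ring and the localized small equivariant quantum cohomology, and then as a $D$-module isomorphism obtained by cyclic extension along the connection in the $\tilde{q}_i$-directions. Uniqueness is enforced by the normalization $\Phi_{\mir}([\Omega]) = 1$ together with $\bbC[\lambda,u][\tilde{q}_i^{\pm 1}]$-linearity and compatibility with $\nabla_{\partial_{\tilde{q}_i}}$.

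For the semi-classical statement (3), the main input is the equivariant Peterson presentation of $\QH_T^{\ast}(G/P)$ as the coordinate ring of a stratum of the Peterson variety (proved in full generality in \cite{Chow_Peterson}), together with Rietsch's identification \cite{Riet} of this stratum with the critical locus of $\mathcal{W}$, suitably twisted by the morphism $p\colon X_P^{\vee}\to T^{\vee}$. The equivariant corrections on the $B$-side are encoded by the form $p^{\ast}\langle \lambda_j^{\ast}, {\rm mc}_{T^{\vee}}\rangle$ appearing in $\partial$, and they must be matched against the equivariant corrections $\lambda_{i_j}$ appearing in the $A$-side connection $\nabla_{\partial_{\tau_j}}^{A} = \partial_{\tau_j} + u^{-1}(\sigma_{v_j}+\lambda_{i_j})\star$. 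The ring isomorphism is then pinned down by sending $[\Omega]$ to $1$.

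To lift (3) to the $D$-module isomorphism (1)-(2), I would first show that $[\Omega]$ generates $G_0(X_P^{\vee},\mathcal{W},p)$ as a module over the subalgebra of $\End_{\bbC[\lambda,u][\tilde{q}^{\pm 1}]}(G_0)$ generated by $\{u\nabla_{\partial_{\tilde{q}_i}}\}$. This reduces via the Jacobi isomorphism (3) to the generation result in \cref{lemma:QHG/PH2}, stating that $\QH_T^{\ast}(G/P)\otimes_{\bbC[\lambda]}\bbC(\lambda)$ is generated by $\{\sigma_{s_\alpha}\}_{\alpha\in \Delta\setminus\Delta_P}$ over $\bbC(\lambda)[\tilde{q}^{\pm 1}]$, combined with a Nakayama-type argument in $u$ to propagate generation from $u=0$. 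Once $\Phi_{\mir}([\Omega])=1$ and connection-preservation are imposed, the image of every $u\nabla$-word applied to $[\Omega]$ is forced, yielding both existence and uniqueness simultaneously. Well-definedness of the resulting map reduces to matching relations on the two sides; these agree at $u=0$ by (3) and extend order-by-order in $u$ using the flatness of $\nabla$.

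The shift operator intertwining (4) is verified by comparing both sides on generators, exploiting that the shift operators are determined by their semi-classical limits together with the connection data, both of which are matched by the preceding steps. The $\bbZ$-grading (5) is automatic from tracking weights under the Euler vector field on each side. The main obstacle is the semi-classical Jacobi isomorphism (3), which rests on the full depth of the equivariant Peterson conjecture \cite{Chow_Peterson}; once this is in hand, the passage to the $D$-module level is a largely formal consequence of cyclicity of $[\Omega]$ and flatness of the connection.
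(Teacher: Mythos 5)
This proposition is an external citation: it is quoted verbatim from \cite[Theorem~1.2]{chow2024dmodulemirrorconjectureflag} (as the bracketed tag on the proposition indicates) and the paper you are reading does \emph{not} prove it. There is no proof environment following the statement; the authors import it as a black box and build on it in \cref{small mirror}, \cref{prop: B model is maximal}, etc. So there is nothing in the paper to compare your proposal against, and any assessment of your sketch is really an assessment against the (unavailable here) proof in Chow's paper.

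That said, a few remarks on the sketch itself. Your two-stage plan (Jacobi ring isomorphism at $u=0$, then lift to a $D$-module isomorphism by cyclicity of $[\Omega]$ under $u\nabla$) is a plausible high-level skeleton, but there is a real gap in the descent of coefficients: the generation result you invoke, \cref{lemma:QHG/PH2}, only gives generation of $\QH_T^{\ast}(G/P)$ \emph{after localizing} to $\bbC(\lambda)$, whereas the proposition asserts a bijection defined over $\bbC[\lambda,u][\tilde q_i^{\pm 1}]$. A cyclicity argument over $\bbC(\lambda)$ does not automatically produce a map with polynomial coefficients, and controlling the denominators is precisely where such equivariant mirror theorems are delicate. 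Relatedly, you treat the shift-operator compatibility (item 4) as a consequence verified at the end, but in the cited paper the shift operators are almost certainly a structural \emph{input} (this is how integrality and uniqueness are typically obtained in equivariant mirror symmetry, following Iritani's method), so your logical ordering likely inverts the actual proof. Finally, deducing (3) from Rietsch's critical-locus identification plus the equivariant Peterson presentation requires a nontrivial matching of the equivariant twists (the $p^{\ast}\langle \lambda_j^{\ast},\mathrm{mc}_{T^\vee}\rangle$ terms against $\lambda_{i_j}$), which your sketch asserts but does not explain. None of this is a criticism of the paper at hand, which simply cites the result; but as a putative reconstruction of the cited proof your outline is incomplete at the coefficient-descent and shift-operator steps.
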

In \eqref{eq:jacobi-isomorphism}, ${\rm Jac}(X^{\vee}_{P}, \mathcal{W},  p)$ denotes the Jacobi ring,  which  is the coordinate ring of the scheme-theoretic zero locus of certain relative 1-forms in $\Omega^1(X^\vee_P\times \mathfrak{t}/Z\times \mathfrak{t})$ (see \cite[Definition 4.9]{chow2024dmodulemirrorconjectureflag} for more details).
It corresponds to setting $u=0$ in the $B$-model $D$-module. 

We remark that the above isomorphism is a bit implicit. Below we provide an example with explicit isomorphism of small quantum $D$-modules from \cite{MaRi}.

\begin{example}
For $G/P=\Gr(3, 5)=\{V\leq \mathbb{C}^5\mid \dim V=3\}$, the Langlands dual flag variety  $G^\vee/P^\vee$ is the Grassmannian $\Gr(2, 5) \hookrightarrow \mathbb{P}^9$, whose image   is defined by the Pl\"ucker  relations $p_{a_1a_2}p_{a_3a_4}-p_{a_1a_3}p_{a_2a_4}+p_{a_1a_4}p_{a_2a_3}=0$ for $1\leq a_1<a_2<a_3<a_4\leq 5$. 
   In this case,  $-K_{G^\vee/P^\vee}=\{p_{12}p_{23}p_{34}p_{45}p_{15}=0\}$.
   The $\mathcal{W}$-part of Rietsch's equivariant superpotential 
   is given by 
   $$\mathcal{W}=\frac{p_{13}}{p_{12}}+\frac{p_{24}}{p_{23}}+\frac{p_{35}}{p_{34}}+\tilde q \frac{p_{14}}{p_{45}}+\frac{p_{25}}{p_{15}}.$$
   The degree of the inhomogeneous coordinate $\theta_{ij}=\frac{p_{ij}}{p_{12}}$ is equal to $2(i+j-3)$.
   The volume form
   $\Omega=\frac{d\theta_{13}d\theta_{14}d\theta_{15}d\theta_{23}d\theta_{24}d\theta_{25}}{\theta_{23}\theta_{34}\theta_{45}\theta_{15}}$ is of degree $0$.
   For $1\leq i<j\leq 5$, 
   $\Phi_{\rm mir}([\theta_{ij} \Omega])=\sigma_w$
   with $w\in S_5$ the unique permutation satisfying   $w(4)=6-j$, $w(5)=6-i$ and 
    $w(1)<w(2)<w(3)$.
\end{example}

\subsubsection{Equivariant F-bundles formulation}
In our setting of (T)-structures, we need to replace the logarithmic $\tilde{q}_i$-directions with a regular meromorphic connection in $y_i$-directions. This is achieved by the $D$-module inverse image under
\begin{align*}
\psi_1\colon \bbC[\lambda,u][\tilde{q_i}^{\pm 1}]&\longrightarrow \bbC[\lambda,u][q_i^{\pm 1}]\dbb{y_{\leq r}}\\
    \tilde{q}_i &\longmapsto q_i e^{y_i},
\end{align*}
where $y_{\leq r} = \lbrace y_i, 1\leq i\leq r\rbrace$. For the purpose of applying our reconstruction theorem to obtain big mirror symmetry, we need to further take the fraction field of $q$ and formalize $u$. 
So we compose $\psi_1$ with the following base change
\[\psi_2\colon \bbC[\lambda,u][q_i^{\pm 1}]\dbb{y_{\leq r}} \longrightarrow  \bbC[\lambda,u][q_i^{\pm 1}]\dbb{y_{\leq r}}\otimes_{\bbC[q_i^{\pm 1},u]}\bbC(q)\dbb{u}.\]
Namely we have the following, where we recall $\bbk=\bbC(q)$ and $R=\bbk[\lambda]$. 
\begin{equation}\label{eq1: change ring}
    \psi\coloneqq \psi_2\circ\psi_1\colon\bbC[\lambda,u][\tilde{q}_i^{\pm 1}]\longrightarrow R\dbb{y_{\leq r},u}.
\end{equation}

The $B$-model $R$-linear (T)-structure  $(\cH_R^{B} , \nabla_R^{B})$ is the $D$-module inverse image $\psi^*\big(\big(G_0(X_P^{\vee}, \mathcal{W}, p),\allowbreak \nabla\big)\big)$, whose underlying $R\dbb{y_{\leq r},u}$-module is
\[\cH_R^{B} = G_0 (X_P^{\vee} , \cW , p)\otimes R\dbb{y_{\leq r} , u}, \]
and the connection is given by 
\begin{align}\label{eq:Gauss-Manin-connection}
    \nabla^B_{R, \partial_{y_{i}}}([\omega])=\big[\mathcal{L}_{\partial_{y_{i}}} (\omega)+ u^{-1}\frac{\partial \mathcal{W}}{\partial y_{i} } \omega  -u^{-1}\sum_{j=1}^n \lambda_j \big(\iota_{\partial_{y_{i}}} p^{\ast} \big\langle (\lambda_j)^{\ast}, {\rm mc}_{T^{\vee}}  \big\rangle\big)\omega \big].
\end{align}

Next we define the $B$-model $\bbk$-linear F-bundle. Fix the $\bbk$-basis of $R$ given by $\blambda = (\lambda^k , k\in \bbN^n)$ and let $\by_{\leq r} \coloneqq \lbrace y_{i,k}, 1\leq i\leq r, k\in\bbN^n \rbrace$. Consider the following change of variables
\begin{align}\label{eq2: change variables}
    \psi_{\blambda}:R\dbb{y_{\leq r},u} &\longrightarrow R\dbb{\by_{\leq r},u}
    \\y_i &\longmapsto \sum_{k\in\bbN^n} \lambda^k y_{i,k}. \nonumber
\end{align}

The underlying (T)-structure 
 $(\cH^B,\nabla^B)$ of the $B$-model $\bbk$-linear F-bundle is defined as the $D$-module inverse image $\psi_{\blambda}^*(\cH_R^B,\nabla^B_R)$ and restrict scalars from $R$ to $\bbk$, as in \cref{lifting lemma}. Explicitly, the underlying $\bbk\dbb{\by_{\leq r},u}$-module is 
\[\cH^{B}= G_0 (X_P^{\vee} ,\cW ,p)\otimes  R\dbb{\by_{\leq r},u},\]
equipped with a regular meromorphic connection 
\[ \nabla^{B}_{\partial_{y_{i, k}}}([\omega ])=\big[\mathcal{L}_{\partial_{y_{i,k}}} (\omega)+  u^{-1}\frac{\partial \mathcal{W}}{\partial y_{i,k} } \omega  -u^{-1}\sum_{j=1}^n \lambda_j \big(\iota_{\partial_{y_{i, k}}} p^{*} \big\langle (\lambda_j)^{*}, {\rm mc}_{T^{\vee}}  \big\rangle\big)\omega \big],\]
where $\mathcal{W}$ is in variables ${y_{i,k}}$.
The $u$-direction is defined in \cref{eq:u-direction-small-B-model}, its definition uses the grading operator on the $B$-model, which we now define. 
We first construct a grading on $\Omega^{\top} (X_P^{\vee}/Z) \otimes \bbk\dbb{\by_{\leq r},u}$.

\begin{construction}[Grading on differential forms] \label{constr:grading-fiber-small-model}
We construct a $\bbk\dbb{\by_{\leq r},u}$-linear operator $\mu_B$ on $\Omega^{\top} (X_P^{\vee} / Z ) \otimes\bbk\dbb{\by_{\leq r},u}$, which defines a grading on differential forms. 

Recall that $\Omega^{\top} (X_P^{\vee} /Z) \otimes\bbk\dbb{\by_{\leq r} , u}$ is a rank $1$ free module over $\cO\big( X_P^{\vee} \big)\dbb{\by_{\leq r}, u}$.
The choice of $\Omega$ in \cref{prop: iso_small_Dmodule current version}(2) produces a basis of this module. 
For any differential form  $\omega = h\Omega$ with $h\in\cO\big( X_P^{\vee} \big)\dbb{\by_{\leq r}, u}$, we define
\[\mu_B ( h\Omega ) \coloneqq \frac{\deg_B (h)}{2}\Omega ,\]
where the degree operator $\deg_B$ is defined on functions in $\cO (X_P^{\vee} )$ using the $\bbG_m$-action on $X_P^{\vee}$ constructed in \cite[Lemma 4.6]{chow2024dmodulemirrorconjectureflag}, and extended by $\bbk\dbb{\by_{\leq r} ,u}$-linearity. 
Using the Jacobian isomorphism \eqref{eq:jacobi-isomorphism} and the grading operator $\mu_A$, we can describe $\deg_B$ as follows. For any local chart, we take a coordinate system $(z_{i})_{i}$ so that $z_{i}$ are homogeneous.
 For a monomial function $h\in \cO \big((G^\vee/P^\vee)\setminus -K_{G^\vee/P^\vee} \big)$, we have $\deg_B (h) = 2dh$ where $d\in \bbZ$ is given by:
\[\big(\sum_{i=1}^{r} \frac{\deg(q_{i})}{2}q_{i}\frac{\partial}{\partial q_{i}}+ \mu_{A} \big) \Phi_{\mir}^{u=0,y=0} (\overline{h})=d \Phi_{\mir}^{u=0,y=0} (\overline{h}),\]
where $\overline{h}$ denotes the image of $h$ in $\Jac (X_P^{\vee}, \cW ,p)$ and $\Phi_{\mir}^{u=0,y=0}$ is the Jacobi isomorphism \eqref{eq:jacobi-isomorphism}. It is then extended to a $\bbk\dbb{\by_{\leq r},u}$-linear operator on $\cO (X_P^{\vee} )\dbb{\by_{\leq r},u}$.
\end{construction}

We use the simple example of $X=\mathbb{CP}^{1}$ case to illustrate the definition of $\deg_{B}$.
\begin{example}[$\mu_B$ for $\bbC\bbP^1$]
For $X=\mathbb{CP}^{1}$, we have $\bbk= \mathbb{C}(q)$.
The equivariant (small) quantum cohomology $\QH_T^{\ast} (X)$ is isomorphic to $\bbk[ H,\lambda] / (H^2 - H\lambda - q)$, where $q$ has degree $4$.
The mirror $X_P^{\vee}$ is the family $\bbG_m\times \Spec \bbC [q^{\pm 1}]\rightarrow \Spec \bbC [q^{\pm 1} ]$, the superpotential is $\cW = z+ \frac{q}{z}$ and the Jacobi ring $\Jac (X_P^{\vee} , \cW, p)$ is isomorphic to $\bbk [z,z^{-1} ,\lambda ] / \big( 1 - \frac{\lambda}{z} - \frac{q}{z^2}\big)$.
The mirror isomorphism $\Phi_{\mir}^{u=0,y=0}$ at $u=0$, $y=0$ is given the morphism of $\bbk [\lambda]$-modules defined by $z\mapsto H$.

We have $\deg_B (qz^3) = q\deg_B (z^3) = 6qz^3$, where the last equality follows from the computation:
\begin{align*}
(\frac{\deg(q)}{2}q\frac{\partial}{\partial q} + \mu_{A} ) \Phi_{\mir}^{u=0,y=0} (\overline{z^{3}})&= \big(\frac{\deg(q)}{2}q\frac{\partial}{\partial q} + \mu_{A} \big)(\lambda^{2}H+qH+q\lambda)\\
&=3 (\lambda^{2}H+qH+q\lambda). 
\end{align*}
\end{example}

The $u$-direction connection of the $B$-model $\bbk$-linear F-bundle $(\cH^B,\nabla^B)$ is defined as 
\begin{align} \label{eq:u-direction-small-B-model}
 \nabla^{B}_{u\partial_u}&\coloneqq \Gr^{B} - \nabla_{E^{\rm }_{B}} , \\
 \Gr^B &\coloneqq u\partial_u + E_B + \mu_B, \nonumber
\end{align}
where $u\partial_u$ measures the degree in $u$, $\mu_B$ is the grading operator on differential forms (see \cref{constr:grading-fiber-small-model}), and $E_B$ is the Euler vector field measuring the degree of the $\by$-variables and accounting for the degree of $q$:
\[E^{\rm }_{B}\coloneqq \sum_{1\leq j\leq r}\frac{\deg(q_j)}{2}\partial_{y_{j,0}} -\sum_{1\leq j\leq r,k\in \bbN^{n}}|k| y_{j,k}\partial_{y_{j,k}}.\]
In the following proposition, we note that even though $\mu_B$ is only defined on differential forms, the total grading operator $\Gr^B$ is well-defined on equivalences classes.

\begin{proposition}\label{grading operator well defined.}
The grading operator $\Gr^{B}$ produces a well-defined operator on $\cH^B$.
\end{proposition}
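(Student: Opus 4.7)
The plan is to show that the differential $\partial$ is graded of weight $1$ with respect to a natural extension of $\Gr^B$ to all differential forms (not just top ones). Once this is established, the commutator identity $[\Gr^B,\partial]=\partial$ holds, which forces $\Gr^B$ to preserve $\im\partial$ and hence to descend to the cokernel $G_0(X_P^\vee,\cW,p)$ and to $\cH^B$.

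First we extend $\mu_B$ to an operator on all of $\Omega^*(X_P^\vee/Z)\otimes \bbk\dbb{\by_{\leq r},u}$ using the $\bbG_m$-action on $X_P^\vee$ from \cite[Lemma 4.6]{chow2024dmodulemirrorconjectureflag}. If $\xi$ denotes the infinitesimal generator of this action, normalized so that the doubled degree of a homogeneous element is twice its $\xi$-weight, we set $\mu_B\coloneqq \tfrac{1}{2}\mathcal{L}_\xi$ and extend $\bbk\dbb{\by_{\leq r},u}$-linearly. This agrees with the formula $\mu_B(h\Omega)=\tfrac{\deg_B(h)}{2}\Omega$ on top forms, because the distinguished volume form $\Omega$ is $\bbG_m$-invariant. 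The extended operator $\Gr^B=u\partial_u+E_B+\mu_B$ is then a derivation on $\Omega^*(X_P^\vee/Z)\otimes R\dbb{\by_{\leq r},u}$ recording the total (doubled) degree, with $\deg u=2$, $\deg \lambda_j=2$, $\deg q_j=2\int_{[X^{s_{i_j}}]}c_1(T_{G/P})$, and $\deg y_{j,k}=-2|k|$.

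Next we verify $[\Gr^B,\partial]=\partial$ term by term in $\partial=ud+d\cW\wedge-\sum_j\lambda_j\,p^*\langle\lambda_j^*,\mathrm{mc}_{T^\vee}\rangle\wedge$. For $ud$, this follows from $\deg u=2$ together with the fact that $d$ commutes with $u\partial_u$ and $E_B$ (both are base derivations) and with $\mu_B$ (Lie derivative commutes with $d$). For $d\cW\wedge$, the needed homogeneity $\Gr^B(\cW)=\cW$, i.e.\ $\cW$ has degree $2$, is the content of \cite[Lemma 4.6]{chow2024dmodulemirrorconjectureflag}; combined with $[\Gr^B,d]=0$, it gives $[\Gr^B,d\cW\wedge]=d\cW\wedge$. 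For the Maurer-Cartan term, $\lambda_j$ supplies degree $2$ while $p^*\langle\lambda_j^*,\mathrm{mc}_{T^\vee}\rangle$ has degree $0$: the logarithmic $1$-form $\langle\lambda_j^*,\mathrm{mc}_{T^\vee}\rangle$ on $T^\vee$ is $T^\vee$-invariant by construction, and it pulls back to a $\bbG_m$-invariant form under $p$ because $p$ is $\bbG_m$-equivariant.

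Granted the commutator identity, for any $\eta\in\Omega^{\top-1}(X_P^\vee/Z)\otimes R\dbb{\by_{\leq r},u}$ we have
\[\Gr^B(\partial\eta)=\partial(\Gr^B\eta)+\partial\eta=\partial(\Gr^B\eta+\eta)\in\im\partial,\]
so $\Gr^B$ descends to $G_0(X_P^\vee,\cW,p)$ and hence to $\cH^B$ by $R\dbb{\by_{\leq r},u}$-linear extension. The main obstacle is the verification that $\cW$ is homogeneous of degree $2$ and that $p$ is $\bbG_m$-equivariant; these are the nontrivial geometric inputs, which we extract from \cite[Lemma 4.6]{chow2024dmodulemirrorconjectureflag}.
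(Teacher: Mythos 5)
Your approach is correct and takes a genuinely different (and arguably cleaner) route than the paper's. The paper picks torus-invariant local coordinates $(z_i)$, writes a $(K-1)$-form $\eta = \sum_i g_i\,\iota_{\partial_{z_i}}(\bigwedge dz_j)$, and directly exhibits $\eta'$ with $\Gr^B((ud+d\cW\wedge)\eta) = (ud+d\cW\wedge)\eta'$, deferring the verification to ``a direct computation.'' Your argument packages the same content conceptually: extend $\mu_B$ to a derivation on all forms via $\cL_\xi$ (using $\bbG_m$-invariance of $\Omega$), establish the operator identity $[\Gr^B,\partial]=\partial$, and conclude that $\Gr^B$ maps $\im\partial$ to $\im\partial$. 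In fact the paper's explicit $\eta'$ is precisely $(\Gr^B + 1)\eta$ unwound in coordinates, so the two arguments are the same identity written at different levels of abstraction. What your route buys is transparency about where each degree-shift comes from; what the paper's route buys is that it avoids having to discuss the extension of $\mu_B$ to non-top forms or the precise $\bbG_m$-weight of auxiliary objects.

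Two things deserve attention. First, you explicitly treat the Maurer--Cartan term $-\sum_j\lambda_j\,p^*\langle(\lambda_j)^*,\mathrm{mc}_{T^\vee}\rangle\wedge$ of $\partial$, whereas the paper's reduction only displays $ud + d\cW\wedge$; this makes your argument more complete, but it rests on two geometric inputs that you assert without proof: that $p$ is $\bbG_m$-equivariant (so that $p^*\langle\cdots\rangle$ is $\cL_\xi$-closed) and that $\lambda_j$ carries $\Gr^B$-weight $1$. The latter is not automatic from \cref{constr:grading-fiber-small-model}, which only defines $\mu_B$ on $\cO(X_P^\vee)$-coefficients and extends $\bbk\dbb{\by_{\leq r},u}$-linearly; one needs the Leibniz-rule extension across the $R=\bbk[\lambda]$-factor that \cref{eq: mu_b leibniz rule} and the proof of \cref{remark:flatness-B-model} implicitly use. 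Second, the homogeneity of $\cW$ (that $\Gr^B(\cW\Omega)=\cW\Omega$) is deduced in the paper (in the proof of \cref{prop:nablaBnon-equiv}) from $\Phi_{\mir}([\cW\Omega])=c_1(G/P)$ via a lemma in \emph{chow2025gammaconjectureiflag}, not from the $\bbG_m$-action lemma in \emph{chow2024dmodulemirrorconjectureflag} that you cite; you should either give the argument or adjust the citation. Also check your normalization: with $\mu_B(h\Omega)=\tfrac{\deg_B(h)}{2}\Omega$ and $\deg_B(h)=2dh$, the eigenvalue of $\mu_B$ on $h\Omega$ is the integer degree $d$, so to match $\mu_B=\tfrac12\cL_\xi$ you want the $\xi$-weight to equal the \emph{doubled} degree, not half of it.
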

\begin{proof}
Let $K$ denote the rank of $\Omega^1 (X_P^{\vee} /Z)$.
We only need to prove that for any $(K-1)$-form $\eta$, there exists a $(K-1)$-form $\eta'$ such that
\begin{align}\label{grading on n-1 form}
    \Gr^{B}\big((ud+d\mathcal{W}\wedge)\eta\big)= (ud+d\mathcal{W}\wedge)\eta'
\end{align}
Fix a torus-invariant local chart with coordinates $(z_i)_i$ and write
\[\eta=\sum_i g_{i} (z,q,\by_{\leq r},u) \iota_{\partial_{z_{i}} }\big(\bigwedge_{j=1}^{K} dz_{j}\big).\]
Let 
\[\eta' \coloneqq\sum_i \big( (u\partial_u + E_B + \deg_B)(g_{i} )+\big(1-\frac{\deg_{B}(z_{i})}{2 z_{i}}\big)g_{i}  \iota_{\partial_{z_{i}} }\big(\bigwedge_{j=1}^{K} dz_{j}\big) .\]
A direct computation shows that this choice of $\eta'$ satisfies \eqref{grading on n-1 form}.
We conclude that $\Gr^B$ descends to an operator on $\cH^B$, completing the proof. 
\end{proof}



We note the following property of $\mu_B$, which we will use in \cref{remark:flatness-B-model}.

\begin{lemma}[Leibniz rule for $\mu_B$]\label{eq: mu_b leibniz rule}
The grading operator $\mu_B$ on differential forms $\mu_{B}$ (see \cref{constr:grading-fiber-small-model}) satisfies the Leibniz rule:
\[\mu_{B}(g_{1}g_{2}\Omega  )=g_{1}\mu_{B}(g_{2}\Omega)+g_{2}\mu_{B}(g_{1}\Omega), \]
for any $g_{1},g_2\in\cO(X_P^\vee)\otimes \bbk\dbb{\by_{\leq r},u}$.
\end{lemma}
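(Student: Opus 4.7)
The plan is to reduce the Leibniz rule for $\mu_B$ to the statement that the degree operator $\deg_B$ is a derivation. By the definition $\mu_B(h\Omega) = \tfrac{1}{2}\deg_B(h)\,\Omega$ and the fact that $\Omega$ is a basis of the rank-one free $\cO(X_P^\vee)\dbb{\by_{\leq r}, u}$-module of top forms, the desired identity
\[\mu_{B}(g_{1}g_{2}\Omega) = g_{1}\mu_{B}(g_{2}\Omega)+g_{2}\mu_{B}(g_{1}\Omega)\]
is equivalent to
\[\deg_B(g_1 g_2) = g_1\,\deg_B(g_2) + g_2\,\deg_B(g_1)\]
for all $g_1,g_2\in \cO(X_P^\vee)\otimes\bbk\dbb{\by_{\leq r}, u}$.

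Second, I would use that $\deg_B$ has been extended $\bbk\dbb{\by_{\leq r},u}$-linearly from $\cO(X_P^\vee)$. Writing $g_i = \sum_\alpha a_{i,\alpha} f_{i,\alpha}$ with $a_{i,\alpha}\in \bbk\dbb{\by_{\leq r},u}$ and $f_{i,\alpha}\in \cO(X_P^\vee)$, a direct bilinearity computation reduces the Leibniz rule to the case $g_1,g_2\in \cO(X_P^\vee)$.

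Finally, I would invoke the geometric origin of $\deg_B$: by \cref{constr:grading-fiber-small-model} it is the infinitesimal generator of the $\bbG_m$-action on $X_P^\vee$ constructed in \cite[Lemma 4.6]{chow2024dmodulemirrorconjectureflag}. Any algebraic $\bbG_m$-action on a scheme induces, by differentiation at the identity, a global vector field on that scheme, and vector fields are by construction derivations of the structure sheaf. Equivalently, working in a torus-invariant local chart with homogeneous eigencoordinates $(z_i)_i$ of weights $2d_i$, one has $\deg_B = 2\sum_i d_i z_i \partial_{z_i}$, which manifestly satisfies the Leibniz rule on monomials, and hence on all of $\cO(X_P^\vee)$ by linearity.

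I do not expect a real obstacle. The only point that merits a sentence of justification is the consistency between the explicit description of $\deg_B$ on monomials via $\Phi_{\mir}^{u=0,y=0}$ and its interpretation as the $\bbG_m$-action: the operator $\sum_{j=1}^r \tfrac{\deg(q_j)}{2}q_j\partial_{q_j} + \mu_A$ is a derivation of the equivariant small quantum product (since that product preserves total degree), and $\Phi_{\mir}^{u=0,y=0}$ is a ring isomorphism, so the eigenvalues assigned to monomials in $\cO(X_P^\vee)$ are additive under multiplication—confirming that the two descriptions agree and that $\deg_B$ is globally a derivation.
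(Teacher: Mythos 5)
Your proof is correct, and in fact it contains the paper's argument verbatim as the final "consistency" paragraph: the paper's entire proof is the single observation that $\Phi_{\mir}^{u=0,y=0}$ is a ring isomorphism and that $\sum_{j} \tfrac{\deg(q_j)}{2}q_j\partial_{q_j} + \mu_A$ satisfies the Leibniz rule on the $A$-side, from which additivity of $\deg_B$ on products follows. What you add is a second, more geometric route: since \cref{constr:grading-fiber-small-model} defines $\deg_B$ via the $\bbG_m$-action on $X_P^\vee$, the operator $\deg_B$ is (up to a factor of $2$) the Euler vector field of that action, hence a derivation by general nonsense, and explicitly $2\sum_i d_i z_i\partial_{z_i}$ in any torus-invariant chart. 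This geometric argument is arguably more direct, as it does not invoke the mirror map at all and works straight from the primary definition of $\deg_B$, whereas the paper leans on the secondary description via $\Phi_{\mir}^{u=0,y=0}$. Either route suffices; your reduction from $\mu_B$ to $\deg_B$ via the rank-one trivialization by $\Omega$, and the bilinear reduction to $g_1,g_2\in\cO(X_P^\vee)$, are both sound.
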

\begin{proof}
The statement follows from the facts that the Jacobian isomorphism $\Phi_{\rm mir}^{u=0, y=0}$ is a ring isomorphism, and that $(\sum_{j=1}^{r} \frac{\deg(q_{j})}{2}q_{j}\frac{\partial}{\partial q_{j}}+ \mu_{A} )$ satisfies the Leibniz rule. 
\end{proof}

By construction, the data $\lbrace (\cH_R^{B} , \nabla_R^{B} ) ,\id \rbrace$ is an $R$-linear lift of the $\bbk$-linear (T)-structure $(\cH^{B} , \nabla^{B} )_0$, and we obtain the $B$-model small equivariant F-bundle
\[\cF^B = \lbrace(\cH^{B}, \nabla^{B}), (\cH_R^{B}, \nabla_R^{B}), \id \rbrace / \bbk\dbb{\by_{\leq r}}  .\]


\begin{lemma}[\cref{prop: iso_small_Dmodule current version}(5)] \label{Chow compatible with grading}
For $h\in \cO((G^\vee/P^\vee)\backslash-K_{G^\vee\backslash P^\vee})$, we have
\[\big(u\partial_u+\sum_{i=1}^{r} \frac{\deg(q_{j})}{2}q_{j}\frac{\partial}{\partial q_{j}}+ \mu_{A} \big) \Phi_{\mir}^{y=0} (h\Omega)=\Phi_\mir^{y=0}\big((u\partial_u+\mu_B)(h\Omega) \big).\]
\end{lemma}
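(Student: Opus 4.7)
The plan is to reduce to the case where $h$ is $\deg_B$-homogeneous, and then to invoke the grading preservation in \cref{prop: iso_small_Dmodule current version}(5). First, by $\bbC$-linearity of both sides in $h$, I would reduce to the case where $h \in \cO((G^\vee/P^\vee) \setminus -K_{G^\vee/P^\vee})$ is a weight vector of the $\bbG_m$-action from \cite[Lemma 4.6]{chow2024dmodulemirrorconjectureflag} that defines $\deg_B$, say with $\deg_B(h) = 2dh$ for some integer $d$.

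For the right-hand side, since $h$ depends on neither $u$ nor $\by_{\leq r}$, we have $u\partial_u(h\Omega) = 0$, and by \cref{constr:grading-fiber-small-model},
\[
(u\partial_u + \mu_B)(h\Omega) = \mu_B(h\Omega) = \frac{\deg_B(h)}{2}\Omega = dh\Omega,
\]
so the right-hand side equals $d\,\Phi_\mir^{y=0}(h\Omega)$.

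For the left-hand side, the defining property of $\deg_B$ in \cref{constr:grading-fiber-small-model} gives the $u=0$ case directly:
\[
\Big(\sum_{i=1}^{r} \frac{\deg(q_i)}{2} q_i \frac{\partial}{\partial q_i} + \mu_A\Big) \Phi_\mir^{u=0,y=0}(\overline{h}) = d\,\Phi_\mir^{u=0,y=0}(\overline{h}).
\]
To promote this to general $u$, I would invoke \cref{prop: iso_small_Dmodule current version}(5): the map $\Phi_\mir$ preserves the $\bbZ$-grading. Since $\Omega$ has B-degree $0$ by the choice in \cref{prop: iso_small_Dmodule current version}(2), and the B-grading on functions restricts to $\deg_B$ (this being its defining property through the Jacobi isomorphism), the element $h\Omega \in G_0(X_P^\vee, \cW, p)$ is B-homogeneous of degree $2d$. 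By grading preservation, $\Phi_\mir(h\Omega)$ is A-homogeneous of degree $2d$; since the A-side $\bbZ/2$-grading operator is $u\partial_u + \sum \frac{\deg(q_j)}{2} \tilde q_j \partial_{\tilde q_j} + \mu_A$, which at $y=0$ reduces (via $\tilde q_j = q_j$) to the operator on the left-hand side, we obtain that the left-hand side equals $d\,\Phi_\mir^{y=0}(h\Omega)$, matching the right-hand side.

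The main obstacle is pinning down the B-side $\bbZ$-grading on $G_0(X_P^\vee, \cW, p)$ so that it restricts to $\deg_B$ on functions and to $0$ on $\Omega$, and ensuring that the $u$-factor contributes exactly through $u\partial_u$. Once this bookkeeping is set up, so that \cref{prop: iso_small_Dmodule current version}(5) can be read as the intertwining of the grading operators on both sides, the lemma is essentially a direct unpacking of item (5) applied to representatives of the form $h\Omega$.
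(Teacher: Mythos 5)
Your proof is correct and takes the same route as the paper: the paper offers no independent argument for this lemma — the bracketed citation after \lq\lq Lemma\rq\rq\ indicates that the statement is simply a restatement of \cref{prop: iso_small_Dmodule current version}(5) read in terms of grading operators, which is exactly what you unpack. Your closing observation about matching the B-side $\bbZ$-grading with $\deg_B$, $\mu_B$, and $u\partial_u$ correctly identifies the only bookkeeping needed, and this is the same identification the paper relies on implicitly via \cref{constr:grading-fiber-small-model}.
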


The following proposition essentially follows from Proposition \ref{prop: iso_small_Dmodule current version}.
Here we add the explanation in detail for completeness.

\begin{proposition}\label{small mirror} 
The $A$-model small equivariant F-bundle $\cF^A$ over $\bbk\dbb{\btau_{\leq r}}$ is isomorphic to the $B$-model small equivariant F-bundle $\cF^B$ over $\bbk\dbb{\by_{\leq r}}$ under an isomorphism  $\big((\mir_\bbk,{\Phi_{\mir,\bbk}}), (\mir,\Phi_\mir) \big)$ where $\Phi_{\mir}$ is as in \cref{prop: iso_small_Dmodule current version}, $\Phi_{\mir ,\bbk}$ is the induced $\bbk$-linear map, and $\mir_\bbk\colon y_{i, k}\mapsto \tau_{i, k}$, $\mir\colon y_{i}\mapsto \tau_{i}$ identify the variables of the equivariant F-bundles.
\end{proposition}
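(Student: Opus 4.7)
The strategy is to package Chow's small quantum $D$-module mirror symmetry \cref{prop: iso_small_Dmodule current version} into the equivariant F-bundle language of \cref{sec:eq-F-bundles-and-T-structure}. My plan is to first construct the $R$-linear (T)-structure component $(\mir,\Phi_\mir)$, then lift it to a $\bbk$-linear (T)-structure morphism $(\mir_\bbk,\Phi_{\mir,\bbk})$ via \cref{lifting lemma}, and finally promote the latter to an honest $\bbk$-linear F-bundle morphism by invoking the fiber characterization \cref{lemma:lift-T-structure-morphism-to-F-bundle}(2).

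For the $R$-linear (T)-structure part, I would apply the $D$-module inverse image along the composite $\psi$ of \eqref{eq1: change ring} to Chow's isomorphism $\Phi_\mir\colon G_0(X_P^\vee,\cW,p)\to \QH_T^\ast(G/P)[u,\tilde q^{-1}]$. Under the change of variable $\tilde q_j=q_j e^{y_j}$, the logarithmic $\tilde q_j\partial_{\tilde q_j}$-direction is identified with the regular $\partial_{y_j}$-direction. Identifying $y_j$ with $\tau_j$ via $\mir$ on the base then matches the resulting module with $\mir^\ast(\cH_R^A,\nabla_R^A)$, using the identity $c_1^T(L_j)=\sigma_{s_{i_j}}+\lambda_{i_j}$ for $1\le j\le r$ to compare connection matrices in each direction. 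This yields the $R$-linear (T)-structure isomorphism $(\mir,\Phi_\mir)$.

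Applying the lift functor of \cref{lifting lemma} to $(\mir,\Phi_\mir)$ then produces a $\bbk$-linear (T)-structure isomorphism $(\mir_\bbk,\Phi_{\mir,\bbk})$, where $\mir_\bbk\colon y_{i,k}\mapsto \tau_{i,k}$ is the canonical extension of $\mir$. By construction this is compatible with the $R$-linear lifts, so by \cref{remark:morphism-eq-F-bundle} it yields a candidate morphism of the underlying (T)-structures of $\cF^A$ and $\cF^B$. The framing hypothesis of \cref{lemma:lift-T-structure-morphism-to-F-bundle} is met because the underlying $R$-linear (T)-structures are of finite rank over $R\dbb{\tau_{\leq r}}$ and $R\dbb{y_{\leq r}}$, hence admit framings by \cref{lemma:weak-framing-t-directions}, which lift to framings of the $\bbk$-linear (T)-structures by \cref{lifting lemma}(2).

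It then remains to verify the $u$-direction compatibility at $\by=\btau=0$, which by \cref{lemma:lift-T-structure-morphism-to-F-bundle}(2) suffices to promote $(\mir_\bbk,\Phi_{\mir,\bbk})$ to an isomorphism of $\bbk$-linear F-bundles. Expanding \eqref{eq:u-direction-small-B-model} and its $A$-side analogue, the Euler vector field derivation parts cancel against those coming from $\Gr^\bullet$, leaving endomorphisms of the form $u\partial_u+\mu_\bullet - u^{-1}\sum_{j\le r}\tfrac{\deg(q_j)}{2}(\mathrm{residue}_j)$ on each side. The residues $\mathrm{residue}_j$ match under $\Phi_\mir|_{y=0}$ thanks to the (T)-structure compatibility established above (both reduce to $c_1^T(L_j)\star^{\mathrm{small}}$ at the origin), while the $u\partial_u+\mu$ parts match by the grading compatibility recorded in \cref{Chow compatible with grading}. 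The main technical obstacle is this last fiber check: one must carefully unpack $\Gr^B$ as a well-defined operator on equivalence classes via \cref{grading operator well defined.} and reconcile the grading conventions on the two sides through \cref{Chow compatible with grading}; the remaining steps are essentially formal consequences of \cref{prop: iso_small_Dmodule current version}, \cref{lifting lemma}, and \cref{lemma:lift-T-structure-morphism-to-F-bundle}.
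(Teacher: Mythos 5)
Your overall strategy matches the paper's: construct the $R$-linear (T)-structure isomorphism by $D$-module inverse image along $\psi$, lift to a $\bbk$-linear (T)-structure morphism via \cref{lifting lemma}, then reduce the $u$-direction compatibility to a fiber check at the origin using framings (\cref{lemma:weak-framing-t-directions}, \cref{lifting lemma}(2), \cref{lemma:lift-T-structure-morphism-to-F-bundle}(2)). The paper's proof invokes \cref{corollary: bundle map is uniquely determined on a point} at this last step rather than \cref{lemma:lift-T-structure-morphism-to-F-bundle}(2), but these are equivalent routes.

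However, your sketch of the fiber computation has a genuine gap. You decompose each $u$-direction connection at the origin as $u\partial_u+\mu_\bullet-u^{-1}A_{E_\bullet}$ (after cancelling the Euler derivation) and claim that the residues $A_{E_\bullet}$ match by (T)-structure compatibility while the $u\partial_u+\mu$ parts match by \cref{Chow compatible with grading}. Neither claim is correct as stated. \cref{Chow compatible with grading} intertwines $u\partial_u+\mu_B$ on the $B$-side with $u\partial_u+\sum_j\frac{\deg(q_j)}{2}q_j\partial_{q_j}+\mu_A$ on the $A$-side: there is an extra $q_j\partial_{q_j}$ contribution, which is genuinely nonzero (try $h=z^2$ for $\bbC\bbP^1$). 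Likewise, since $\Phi_{\mir,\bbk}$ depends on $\by$ through $\tilde q_j=q_je^{\sum_k\lambda^k y_{j,k}}$, the (T)-structure compatibility gives the residue intertwining only up to a correction term involving $\partial_{y_{j,0}}(\Phi_{\mir,\bbk})|_{\by=0}$, which again is nonzero. These two corrections are exactly what the paper reconciles, by the chain rule observation that on a power series in $q_je^{\sum_k\lambda^k y_{j,k}}$ the operator $\partial_{y_{j,0}}$ at $\by=0$ equals $q_j\partial_{q_j}$, so that $E_B$ applied to $\Phi_{\mir,\bbk}([f\Omega])$ at $\by=0$ produces precisely the $\sum_j\frac{\deg(q_j)}{2}q_j\partial_{q_j}$ term demanded by \cref{Chow compatible with grading}. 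Without this chain-rule step, the two corrections do not visibly cancel, so the fiber check as you wrote it does not go through. You should also note the paper's intermediate reduction, by $\bbk[q]$-linearity and the Leibniz rule, to elements of $\cO\big((G^\vee/P^\vee)\setminus -K_{G^\vee/P^\vee}\big)$, which is what makes the chain-rule argument cleanly applicable.
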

\begin{proof} 
Our construction of $B$-model small equivariant F-bundle \[\cF^B = \lbrace(\cH^{B}, \nabla^{B}), (\cH_R^{B}, \nabla_R^{B}), \id \rbrace / \bbk\dbb{\by_{\leq r}}\] 
consists of the (T)-structure obtained as $D$-module inverse image:
\begin{align*}
    (\cH^B_R,\nabla^B_R) &= \psi^* \big(G_0(X_P^{\vee}, \mathcal{W}, p),\nabla\big),\\
    (\cH^B,\nabla^B) &= (\psi_{\blambda}\circ \psi)^* \big(G_0(X_P^{\vee}, \mathcal{W}, p),\nabla\big),
\end{align*}
where $\psi$ and $\psi_{\blambda}$ are base change maps defined in equations \eqref{eq1: change ring} and \eqref{eq2: change variables}, together with a $u$-direction on $(\cH^B,\nabla^B)$.
\\Our construction of $A$-model small equivariant F-bundle $\cF^{A}= \lbrace (\cH^{A}, \nabla^{A}), \allowbreak (\cH_R^{A}, \nabla_R^{A}), \id \rbrace / \bbk\dbb{\btau_{\leq r}}$ in \cref{subsection: Equivariant F-bundle structures of G/P}, can also be written as (T)-structures $(\cH^A,\nabla^A)$ and $(\cH^A_R,\nabla^A_R)$ obtained as $D$-module inverse image under the same maps, together with a $u$-direction on $(\cH^A,\nabla^A)$. \\
By the functoriality of pullback, the isomorphism in Proposition \ref{prop: iso_small_Dmodule current version} induces an isomorphism
\[((\mir_\bbk,\Phi_{\mir,\bbk}),  (\mir,\Phi_{\mir}))\colon\cF^B\longrightarrow \cF^A\]
of the underlying (T)-structures. 
It suffices to prove that in our setting $\Phi_{\mir}$ is also graded, i.e.\ that $(\mir_{\bbk})_{\ast} E_B = E_A$ and
\begin{equation}\label{eq:compatibility-gradings}
    \Phi_{\mir,\bbk}\circ \Gr^{B} = \mir_\bbk^\ast \Gr^A \circ \Phi_{\mir,\bbk},
\end{equation}
where $\mir_\bbk^\ast \Gr^A$ is the grading operator on $\mir_\bbk^*\cH_A$ induced from $\Gr^A$. 
The compatibility of the Euler vector fields is clear from their definition and the fact that $\mir_{\bbk}$ identifies the variables $\by_{\leq r}$ and $\btau_{\leq r}$.
For \eqref{eq:compatibility-gradings}, we need to check that for all $f\in O(X_P^\vee)\dbb{\by_{\leq r},u}$, we have 
\[ \Phi_{\mir,\bbk}([(u\partial_{u}+E_{B}+\mu_{B})(f\Omega)])= (u\partial_{u}+E_B+\mu_{A})(\Phi_{\mir,\bbk}([f\Omega])).\]
Since both sides are linear in $q$ and satisfy the Leibniz rule for $y$ and $u$, it suffices to check the equation for $f\in \cO\big((G^\vee/P^\vee)\backslash -K_{G^\vee/P^\vee}\big)$.

In this case, the left-hand side reduces to $\Phi_{\mir,\bbk} ([\mu_{B}(f\Omega)])$.
For the right-hand side, observe that $\Phi_{\mir,\bbk} ([f\Omega] )$ is obtained by first applying the mirror map \eqref{eq:chow-mirror-map} from \cite{chow2024dmodulemirrorconjectureflag} to $[f\Omega ]$, and then pulling back under the change of variables $\tq_i \mapsto  q_i e^{\sum_k \lambda^k y_{i,k}}$.
Hence $\Phi_{\mir,\bbk} ([f\Omega] )$ is a power series in the variables $\lbrace q_ie^{\sum_k \lambda^k y_{i,k}}\rbrace_{1\leq i\leq r}$ with coefficients in $H_T^{\ast} (X,\bbC )$.
For an equivariant cohomology class $\alpha\in H_T^{\ast}(X,\bbC )$, we have 
\[q_i\frac{\partial}{\partial q_i} (q_i e^{\sum_k \lambda^k y_{i,k} }\alpha) = \frac{\partial}{\partial y_{i,0}} (q_ie^{\sum_k \lambda^k y_{i,k}} \alpha).\]
It follows that at $y=0$, the right hand side is 
\begin{align*}
    (u\partial_u + E_B + \mu_A)(\Phi_{\mir,\bbk} ([f\Omega] )) &= \big( u\partial_u + \sum_{i=1}^r \frac{\deg (q_i)}{2} q_i\frac{\partial}{\partial q_i} + \mu_A\big)(\Phi_{\mir,\bbk} ([f\Omega] )) \\
    &= \Phi_{\mir,\bbk} ([\mu_B (f\Omega )]),
\end{align*}
where the last equality follows from \cref{Chow compatible with grading}. So this implies that the bundle map is compatible with $u$-direction at $y=0$. This implies by \cref{corollary: bundle map is uniquely determined on a point} that \cref{eq:compatibility-gradings} holds for any $y$.
We deduce that $(\mir_{\bbk}, \Phi_{\mir,\bbk})$ is an isomorphism of F-bundles, concluding the proof.
\end{proof}

\subsection{\texorpdfstring{The big $D$-module mirror symmetry for $G/P$}{The big D-module mirror symmetry for G/P}} 

In this subsection, we prove the big $D$-module mirror symmetry for  $G/P$ in the framework of equivariant F-bundles.
We first construct a maximal unfolding of the small B-model equivariant F-bundle by unfolding the superpotential $\cW$ (\cref{constr:unfolded-potential}).
In particular, we discuss freeness in \cref{Lemma: formal Nakayama,B side R-linear T-structure}, and flatness in \cref{B side R-linear T-structure,remark:flatness-B-model}.
We obtain the equivariant big mirror symmetry for flag varieties in \cref{thm:big-mirror-symmetry-flag}, and deduce a non-equivariant version in \cref{thm:non-equivariant}.

\subsubsection{Unfolding of the B-model}
We fix formal variables $y = \lbrace y_1,\dots, y_N\rbrace$ and $\by = \lbrace y_{i,k} , 1\leq i\leq N, k\in\bbN^n\rbrace$.

\begin{construction}[Unfolded superpotential]\label{constr:unfolded-potential}
    For any $r<j\leq N$, let $\bar{f}_{j}\coloneqq (\Phi^{u=0,y=0}_{\rm mir})^{-1}(\sigma_{v_{j}})\in  {\rm Jac}(X^{\vee}_{P}, \mathcal{W},  p)$.
    Let $f_j\in \mathcal{O}(X^{\vee}_{P})$ be a lift of $\bar{f}_{j}$ such that the function $f_{j}$ is independent of $y_{1},...,y_{N}$.
    The unfolded superpotential $\widetilde{\cW}$ is
    $$\widetilde{\mathcal{W}}\coloneqq \mathcal{W}+\sum_{j=r+1}^N y_{j} f_j.$$
\end{construction}

Similar to the previous section on the small $B$-model equivariant $F$-bundle, we now associate to $\widetilde{\cW}$ the big $B$-model equivariant F-bundle 
\[\cF^{B,\bigquantum} \coloneqq \lbrace(\cH^{B, {\rm big}}, \nabla^{B, {\rm big}}), (\cH_R^{B, {\rm big}}, \nabla_R^{B, {\rm big}}), \id \rbrace / \bbk\dbb{\by} .\]
We first construct an $R$-linear (T)-structure $(\cH_R^{B,\bigquantum},\nabla_R^{B,\bigquantum})$ , consisting of a $R\dbb{y,u}$-module defined by
\begin{align*}
    \cH_R^{B,\bigquantum}&\coloneqq \mbox{coker} \big(R\dbb{y, u} \otimes_{\bbC[\tilde{q}_i^{\pm 1}]} \Omega^{{\rm top}-1}(X^\vee_P/Z)\overset{\partial}{\longrightarrow} R\dbb{y, u}\otimes_{{\bbC[\tilde{q}_i^{\pm 1}]}} \Omega^{\rm top}(X^\vee_P/Z)\big),\\
     \partial &\coloneqq 1\otimes \big( ud + d\widetilde{\cW}\wedge  -\sum_{j=1}^n  \lambda_j  (p^*\langle (\lambda_j)^{*}, {\rm mc}_{T^\vee}\rangle)\wedge\big),
\end{align*}
equipped with a connection defined by 
\[\nabla^{B, {\rm big}}_{R, \partial_{y_{i}}}([\omega])\coloneqq\big[\mathcal{L}_{\partial_{y_{i}}} (\omega)+ u^{-1}\frac{\partial \widetilde{\mathcal{W}}}{\partial y_{i} } \omega  - u^{-1}\sum_{j=1}^n \lambda_j \big(\iota_{\partial_{y_{i}}} p^{*} \big\langle (\lambda_j)^{*}, {\rm mc}_{T^{\vee}}  \big\rangle \big)\omega \big].\]

\begin{proposition}\label{B side R-linear T-structure}
    The data $(\cH_R^{B,\bigquantum},\nabla_R^{B,\bigquantum})/R\dbb{y}$ defines an $R$-linear (T)-structure.
\end{proposition}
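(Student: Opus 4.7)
The plan is to verify the three defining properties of an $R$-linear (T)-structure from \cref{def:eq-F-bundle}: the $u^{-1}$-pole structure of $\nabla_R^{B,\bigquantum}$ together with its descent to the cokernel, freeness of $\cH_R^{B,\bigquantum}$ as an $R\dbb{y,u}$-module, and flatness.

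\textbf{Pole structure and descent.} The explicit formula for $\nabla_{R,\partial_{y_i}}^{B,\bigquantum}$ shows that each term contributes at worst a simple pole in $u$, and $R$-linearity is built into the construction. That $\nabla_R^{B,\bigquantum}$ descends to the cokernel is a standard Gauss-Manin calculation: the commutator of $\nabla_{R,\partial_{y_i}}^{B,\bigquantum}$ with the boundary map $\partial = 1\otimes(ud + d\widetilde{\cW}\wedge - \sum_j \lambda_j\,p^*\langle\lambda_j^*,\mathrm{mc}_{T^\vee}\rangle\wedge)$ vanishes up to exact forms, using $d(d\widetilde{\cW}) = 0$ and closedness of the Maurer-Cartan contribution $p^*\langle\lambda_j^*,\mathrm{mc}_{T^\vee}\rangle$.

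\textbf{Freeness.} Setting $y_j = 0$ for $j>r$ reduces $\widetilde{\cW}$ to $\cW$, so the boundary map defining $\cH_R^{B,\bigquantum}$ reduces modulo the closure of the ideal $(y_{r+1},\ldots,y_N)$ to that of the small $B$-model. This yields a canonical isomorphism
\[ \cH_R^{B,\bigquantum} / (y_{r+1},\ldots,y_N)\cH_R^{B,\bigquantum} \simeq \cH_R^B \]
viewed over $R\dbb{y_{\leq r},u}$. By \cref{small mirror}, $\cH_R^B \simeq \cH_R^A$ as (T)-structures, and $\cH_R^A$ is manifestly free of finite rank. A formal Nakayama argument for the complete linearly topologized ring $R\dbb{y,u}$ (cf.\ the formal Nakayama lemma invoked in the introduction to this subsection) then implies that any lift of a basis of $\cH_R^B$ to $\cH_R^{B,\bigquantum}$ is an $R\dbb{y,u}$-basis.

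\textbf{Flatness.} Write $\nabla_{R,\partial_{y_i}}^{B,\bigquantum} = \partial_{y_i} + u^{-1} A_i$, where $A_i = \partial_{y_i}\widetilde{\cW} - \sum_k \lambda_k\,\iota_{\partial_{y_i}}p^*\langle\lambda_k^*,\mathrm{mc}_{T^\vee}\rangle$ acts by multiplication by a function on $X_P^\vee$. The flatness commutator expands as
\[ [\nabla_{R,\partial_{y_i}}^{B,\bigquantum},\nabla_{R,\partial_{y_j}}^{B,\bigquantum}] = u^{-1}(\partial_{y_i}A_j - \partial_{y_j}A_i) + u^{-2}[A_i,A_j]. \]
The $u^{-2}$ term vanishes because the $A_i$ commute as multiplication operators. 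For the $u^{-1}$ term, the superpotential contributions cancel by equality of mixed partials $\partial_{y_i}\partial_{y_j}\widetilde{\cW} = \partial_{y_j}\partial_{y_i}\widetilde{\cW}$. The Maurer-Cartan contributions either cancel by symmetry (the case $i,j\leq r$, already known in the small case from \cite{chow2024dmodulemirrorconjectureflag}) or vanish identically when any index exceeds $r$, since $p$ does not depend on $y_{>r}$ and hence $\iota_{\partial_{y_i}}p^*\langle\lambda_k^*,\mathrm{mc}_{T^\vee}\rangle = 0$ for $i>r$.

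\textbf{Main obstacle.} The freeness step is the principal technical point, as the formal Nakayama argument has to be carried out in the topological completion $R\dbb{y,u}$ with respect to the non-maximal closure of $(y_{r+1},\ldots,y_N)$, using the linear topology from \cref{subsec:completions} rather than a naive adic setup. By contrast, once the explicit expressions for $\nabla_{R,\partial_{y_i}}^{B,\bigquantum}$ and $A_i$ are unpacked, the pole structure, descent, and flatness are routine bookkeeping that decompose neatly along the index ranges $i,j \leq r$ and $i,j > r$.
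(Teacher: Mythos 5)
Your broad strategy matches the paper's: reduce to the small model via $y_{>r}=0$, then upgrade to an $R\dbb{y,u}$-basis by a formal Nakayama-type argument, and verify flatness by inspecting the connection matrices. However, there is a genuine gap in the freeness step. The relevant \cref{Lemma: formal Nakayama} in the paper is \emph{not} plain topological Nakayama: it includes the extra hypothesis that each variable $z_i$ (here $y_{r+1},\dots,y_N$) acts \emph{torsion-freely} on $M$. Nakayama alone only gives that a lift of a basis of $\cH_R^{B,\bigquantum}/(y_{>r})$ \emph{generates} $\cH_R^{B,\bigquantum}$; linear independence is exactly what the torsion-free hypothesis buys, and it must be verified for $M = \cH_R^{B,\bigquantum}$ viewed over $R_0\dbb{z}$ with $R_0 = R\dbb{y_{\leq r},u}$. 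The paper does this explicitly by observing that $y_j\omega \in \im(\partial)$ iff $\omega \in \im(\partial)$ (so $y_j$ is a nonzerodivisor on the cokernel). Your proof omits this check entirely, and your ``Main obstacle'' paragraph misidentifies the hard part as topological subtleties of the linear topology on $R\dbb{y,u}$ — in fact the topological completion is not where the delicacy lies; the torsion-freeness of $y_{>r}$ on the Brieskorn-type cokernel is. Without it, the freeness claim is unproven.

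The remaining parts (pole structure, descent to the cokernel, and flatness) are essentially the same as the paper's, just spelled out more explicitly. Your decomposition of the flatness check by index ranges $i,j\le r$ versus $>r$ is equivalent to the paper's use of the facts that $\partial\widetilde{\cW}/\partial y_j = f_j$ is constant in $y$ for $j>r$, that $[\cL_{\partial_{y_s}},\cL_{\partial_{y_t}}]=0$, and that $[\cL_{\partial_{y_s}},\iota_{\partial_{y_t}}]=0$. Using \cref{small mirror} to transfer freeness from $\cH_R^A$ to $\cH_R^B$ is acceptable (the paper directly cites \cite[p.~52]{chow2024dmodulemirrorconjectureflag} for the latter), though slightly roundabout.
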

\begin{proof} 
We first check that $\cH^{B,\bigquantum}_R$ is a free $R\dbb{y,u}$-module. 
By construction we have 
$\cH_R^{B,\bigquantum}\vert_{y_{\geq r+1}=0} = \cH_R^{B}$, which is a finite free $R\dbb{y_{\leq r},u}$-module (see \cite[p.\ 52]{chow2024dmodulemirrorconjectureflag}). 
Since $y_j\omega \in \im(\partial)$ if and only if $\omega \in \im(\partial)$, the element $y_j$ is torsion-free. 
Hence, the freeness of $\cH_R^{B,\bigquantum}$ follows from \cref{Lemma: formal Nakayama} below.
The flatness of $\nabla_R^{B,\bigquantum}$ follows from the facts that $\frac{\partial \widetilde{\mathcal{W}}}{\partial y_i}=f_i$ is independent of $y_1,\dots,y_N$, that  $[\cL_{\partial_{y_s}},\cL_{\partial_{y_t}}]=0$, and that $[\cL_{\partial_{y_s}},\iota_{y_t}]=0$.
\end{proof}

\begin{lemma}\label{Lemma: formal Nakayama}
Let $R_0$ be a commutative unital ring and $M$ be an $R_0\dbb{z_1,\cdots,z_m}$-module such that $z_i$ is torsion-free for all $1\leq i\leq m$. 
Let $\Omega_1,\dots, \Omega_N\in M$.
If $\{\bar \Omega_1,\cdots, \bar \Omega_N\}$ is an $R_{0}$-basis of $M/(z_{1},...,z_{m})M$, then $\{\Omega_1,\cdots,\Omega_N\}\subset M$ is
an $R_{0}\dbb{z_1,\cdots, z_m}$-basis of $M\dbb{z_1,\cdots, z_m}$.
\end{lemma}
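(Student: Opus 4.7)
The claim is a formal Nakayama-type statement: a lift of an $R_0$-basis of $M/IM$ (where $I=(z_1,\dots,z_m)$) becomes an $R_0\dbb{z_1,\dots,z_m}$-basis of $M$. The strategy splits into spanning and linear independence.

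For spanning I would use the standard iterative procedure. Given $\omega\in M$, decompose $\omega$ modulo $IM$ using the basis $\{\bar\Omega_i\}$ as $\omega=\sum_i a_{i,0}\Omega_i+r_1$ with $r_1\in IM$; write $r_1=\sum_j z_j\omega_j^{(1)}$, expand each $\omega_j^{(1)}$ the same way, and iterate. This produces polynomials $f_i^{(n)}$ of degree $\le n$ with $\omega-\sum_i f_i^{(n)}\Omega_i\in I^{n+1}M$, converging coefficient-wise to $f_i\in R_0\dbb{z_1,\dots,z_m}$. The identity $\omega=\sum_i f_i\Omega_i$ then holds in $M$ provided $M$ is $I$-adically separated; this is satisfied in the intended application, where $M=\cH_R^{B,\bigquantum}$ is realized as the cokernel of a continuous map between $(y)$-adically complete free modules.

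For linear independence I would induct on $m$. The base case $m=1$: if $\sum_i f_i(z_1)\Omega_i=0$ with some $f_i\ne 0$, let $k$ be the smallest integer such that some $f_i$ has nonzero coefficient of $z_1^k$; writing $\sum_i f_i\Omega_i=z_1^k\sum_i g_i\Omega_i$ and using that $z_1^k$ is NZD on $M$ gives $\sum_i g_i\Omega_i=0$, and reduction modulo $z_1$ contradicts the linear independence of $\{\bar\Omega_i\}$. For the inductive step, view $M$ as an $R_0\dbb{z_1,\dots,z_{m-1}}\dbb{z_m}$-module, apply the $m=1$ case in the variable $z_m$ to reduce to showing $\{\Omega_i\}$ is a basis of $M/z_mM$ over $R_0\dbb{z_1,\dots,z_{m-1}}$, and invoke the inductive hypothesis.

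The main obstacle is that this descending induction requires each $z_i$ ($i<m$) to remain a non-zero-divisor on $M/z_mM$, i.e.\ that $(z_1,\dots,z_m)$ form a regular $M$-sequence—strictly stronger than individual torsion-freeness. In the application with $M=\cH_R^{B,\bigquantum}$ and new variables $z=(y_{r+1},\dots,y_N)$, this regularity is verified directly from the linear shape $\widetilde{\cW}=\cW+\sum_{j>r}y_jf_j$ of the unfolded superpotential: since each $f_j$ is independent of $y_1,\dots,y_N$, setting $y_N=0$ yields a cokernel presentation of the same shape with one fewer new variable, so the NZD argument used in the proof of \cref{B side R-linear T-structure} propagates through successive quotients all the way down to the small-locus free module $\cH_R^B$.
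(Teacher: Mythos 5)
Your spanning and $m{=}1$ linear-independence arguments match the paper's in substance (the paper invokes Nakayama's lemma for spanning and an equivalent bootstrap for the $m{=}1$ freeness step), but you have correctly identified a genuine gap in the $m\geq 2$ case, which the paper dismisses with ``follows from the case $m{=}1$ by a direct induction on the number of variables.'' As you observe, that induction needs $z_i$ to remain a non-zero-divisor on $M/(z_{i+1},\dots,z_m)M$ at each stage, i.e.\ that $(z_1,\dots,z_m)$ is a regular $M$-sequence, and this is strictly stronger than each $z_i$ being individually torsion-free. In fact the lemma is false as stated: take $R_0=k$, $m=2$, and $M=(z_1,z_2)\subset k\dbb{z_1,z_2}$. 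Each $z_i$ is a non-zero-divisor on $M$ (a submodule of a domain), and $M/(z_1,z_2)M=(z_1,z_2)/(z_1,z_2)^2$ is free of rank two over $k$ with basis $\{\bar z_1,\bar z_2\}$, yet $\{z_1,z_2\}$ is not an $R_0\dbb{z_1,z_2}$-basis of $M$ because of the Koszul relation $z_2\cdot z_1-z_1\cdot z_2=0$; here $z_1$ is a zero-divisor on $M/z_2M$, so the regular-sequence condition fails. The hypothesis should therefore be strengthened to a regular-sequence (or equivalently, local-flatness) condition. Your proposed fix in the application, using the linear form $\tcW=\cW+\sum_{j>r}y_jf_j$ with $f_j$ independent of $y$ so that the non-zero-divisor argument of \cref{B side R-linear T-structure} propagates through each successive quotient $M/(y_{j+1},\dots,y_N)M$, is exactly the right way to supply the missing regularity.
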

\begin{proof}
By Nakayama lemma \cite[\S VIII.3, Corollary 2]{Zariski_commutative_II}, $\lbrace \Omega_1,\cdots,\Omega_N\rbrace$ generates $M$ as an $R_0\dbb{z_1,\cdots,\allowbreak z_m}$-module. It remains to prove the freeness. 

We first treat the case $m=1$.
Let $g_i(z_{1})\in R_{0}\dbb{z_{1}}$ be coefficients such that $\sum_i g_i(z_{1})\Omega_i=0$. 
Since $\lbrace \Omega_i\rbrace$ induces a basis of $M/z_1M$, we have $g_i(0)=0$. 
Assume $g_i$ has no terms of degree less than or equal to $b-1$ in $z_1$, i.e. we can write $g_i = z_{1}^bh_i$.
Then we have 
\begin{equation*}
    \sum_i z_{1}^b h_i\Omega_i=0.
\end{equation*}
Since $z_1^b$ is torsion-free in $M$, we deduce that $\sum_i h_i\Omega_i=0$, which implies $h_i(0)=0$.
Hence, $g_i$ has no terms of degree less than or equal to $b$.
By induction on $b$, this implies $g_i=0$.

The case $m\geq 2$ follows from the case $m=1$ by a direct induction on the number of variables.
The proof is complete.
\end{proof}

The $\bbk$-linear F-bundle $(\cH^{B,\bigquantum},\nabla^{B,\bigquantum})/\bbk\dbb{\by}$ is given by the $D$-module inverse image of $(\cH_R^{B,\bigquantum},\allowbreak \nabla_R^{B,\bigquantum})$ under the map of $R\dbb{u}$-algebras $
    R\dbb{y,u}\rightarrow R\dbb{\by,u}$ given by $y_i\mapsto \sum_{k\in \bbN^n}\lambda^k{y_{i,k}}$.
More explicitly:
\begin{align*}
    \cH^{B,\bigquantum} &\coloneqq\cH_R^{B,\bigquantum}\otimes_{\bbk\dbb{y, u}}\bbk\dbb{\by ,u} ,\\
     \nabla^{B,{\rm big}}_{\partial_{y_{i, k}}}([\omega ])&\coloneqq \big[\mathcal{L}_{\partial_{y_{i,k}}} (\omega)+ u^{-1}\frac{\partial \widetilde{\mathcal{W}}}{\partial y_{i,k} } \omega  - u^{-1}\sum_{j=1}^n \lambda_j \big(\iota_{\partial_{y_{i,k}}} p^{*} \big\langle (\lambda_j)^{*}, {\rm mc}_{T^{\vee}}  \big\rangle \big)\omega \big],
\end{align*}
equipped with the $u$-direction connection
\begin{align*}
    \nabla^{B,{\rm big}}_{u\partial_u}&\coloneqq \Gr^{B,\bigquantum} - \nabla_{E_B^{\bigquantum}}, \\
    \Gr^{B,\bigquantum} &\coloneqq u\partial_u + E_B^{\bigquantum} + \mu_B,
\end{align*}
where the Euler vector field is given by
\[E^{\rm big}_{B}\coloneqq \sum_{1\leq j\leq r}\frac{\deg(q_j)}{2}\partial_{y_{j,0}} +  \sum_{1\leq j\leq N,k\in \bbN^{n}} (1-\ell(v_j)-|k|) y_{j,k}\partial_{y_{j,k}},\]
and where the grading on differential forms $\mu_B$ (see \cref{constr:grading-fiber-small-model}) is extended to a $\bbk\dbb{\by,u}$-linear operator.
Similar to \cref{grading operator well defined.}, the total grading operator $\Gr^{B,\bigquantum}$ lifts to $\cH^{B,\bigquantum}$.

\begin{proposition}\label{remark:flatness-B-model}
The big Gauss-Manin connection $\nabla^{B, {\rm big}}$ is flat. 
\end{proposition}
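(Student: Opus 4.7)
My plan is to decompose the flatness of $\nabla^{B,\bigquantum}$ into two pieces: T-flatness $[\nabla^{B,\bigquantum}_{\partial_{y_{i,k}}}, \nabla^{B,\bigquantum}_{\partial_{y_{j,l}}}] = 0$ in the $\by$-directions, and compatibility $[\nabla^{B,\bigquantum}_{u\partial_u}, \nabla^{B,\bigquantum}_{\partial_{y_{i,k}}}] = 0$ with the $u$-direction. The first piece will follow immediately from the construction: by definition the underlying $\bbk$-linear (T)-structure $(\cH^{B,\bigquantum}, \nabla^{B,\bigquantum})_0$ is the image of the $R$-linear (T)-structure $(\cH_R^{B,\bigquantum}, \nabla_R^{B,\bigquantum})$ under the functor of \cref{lifting lemma} applied to the change of variables $y_i \mapsto \sum_{k\in \bbN^n}\lambda^k y_{i,k}$, and this functor preserves T-flatness by the chain rule, so the result follows from \cref{B side R-linear T-structure}.

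For the $u$-compatibility, I will write $\nabla^{B,\bigquantum}_{u\partial_u} = \Gr^{B,\bigquantum} - \nabla^{B,\bigquantum}_{E_B^{\bigquantum}}$. Combining T-flatness with the commutator $[E_B^{\bigquantum}, \partial_{y_{i,k}}] = -(1-\ell(v_i)-|k|)\partial_{y_{i,k}}$, the identity reduces to the homogeneity relation
\[ [\Gr^{B,\bigquantum}, \nabla^{B,\bigquantum}_{\partial_{y_{i,k}}}] = (\ell(v_i)+|k|-1)\,\nabla^{B,\bigquantum}_{\partial_{y_{i,k}}}. \]
Splitting $\Gr^{B,\bigquantum} = u\partial_u + E_B^{\bigquantum} + \mu_B$ and writing $\nabla^{B,\bigquantum}_{\partial_{y_{i,k}}} = \partial_{y_{i,k}} + u^{-1}M_{i,k}$ with $M_{i,k}$ the multiplication operator by $\tfrac{\partial\widetilde{\cW}}{\partial y_{i,k}} - \sum_j \lambda_j\iota_{\partial_{y_{i,k}}}p^*\langle(\lambda_j)^*, \mathrm{mc}_{T^\vee}\rangle$, I will verify this term by term using $[u\partial_u, u^{-1}M_{i,k}] = -u^{-1}M_{i,k}$ and $[\mu_B, \partial_{y_{i,k}}] = 0$ (since $\mu_B$ is $\bbk\dbb{\by,u}$-linear). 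The critical piece is $[\mu_B, u^{-1}M_{i,k}] = u^{-1}[\mu_B, M_{i,k}]$, which by the Leibniz rule of \cref{eq: mu_b leibniz rule} equals multiplication by $\tfrac{\deg_B(M_{i,k})}{2}$.

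The main obstacle will be verifying that $M_{i,k}$ is $\deg_B$-homogeneous of the correct total degree, so that the contributions above recombine into the required identity. For the superpotential term, the lifts $f_j$ in $\widetilde{\cW} = \cW + \sum_{j>r} y_j f_j$ are chosen to represent the homogeneous Schubert classes $\sigma_{v_j}$ under the Jacobi isomorphism, so \cref{Chow compatible with grading} together with the definition of $\deg_B$ gives the correct degrees; a parallel argument using the homogeneity of $\cW$ handles $\tfrac{\partial\cW}{\partial y_i}$ for $i\leq r$. For the Maurer-Cartan term, $p^*\langle(\lambda_j)^*, \mathrm{mc}_{T^\vee}\rangle$ is pulled back from $T^\vee$ and is $\deg_B$-invariant, while the factor $\lambda_j$ carries degree $2$. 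Assembling these contributions yields the homogeneity relation and hence the flatness of $\nabla^{B,\bigquantum}$.
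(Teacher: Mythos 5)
Your decomposition into T-flatness plus $u$-compatibility, and the reduction of the latter to the homogeneity relation $\big[\Gr^{B,\bigquantum},\nabla^{B,\bigquantum}_{\partial_{y_{i,k}}}\big]=(\ell(v_i)+|k|-1)\nabla^{B,\bigquantum}_{\partial_{y_{i,k}}}$, is exactly the structure of the paper's proof, and your treatment of the $j>r$ case (Leibniz rule from \cref{eq: mu_b leibniz rule} plus homogeneity of $f_j$, $\lambda^k$, $u^{-1}$) matches the paper's computation step for step.

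The one place where you genuinely diverge is the case $j\le r$. The paper disposes of it in one line, by observing that for $j\le r$ the relation is precisely the $u$-flatness of the \emph{small} F-bundle $(\cH^B,\nabla^B)$, which is already known (it is isomorphic to the $A$-model via \cref{small mirror}, and the $A$-model's flatness is cited in Remark~4.4). You instead attempt a direct verification, and here the argument is sketchier than you acknowledge. The superpotential term $\partial\cW/\partial y_{i,k}=\lambda^k\,\tilde q_i\,\partial\cW/\partial\tilde q_i$ is genuinely $y$-dependent, so $E_B^{\bigquantum}(M_{i,k})\neq 0$ and enters the balance; and the Maurer--Cartan term, after contraction with $\partial_{y_{i,k}}$ through the change of variables $\tilde q_i=q_ie^{\sum_k\lambda^k y_{i,k}}$, picks up an extra factor $\lambda^k$. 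That factor contributes the $|k|$ in the required eigenvalue $\ell(v_i)+|k|$; your statement that the Maurer--Cartan form is ``$\deg_B$-invariant'' while only $\lambda_j$ carries degree misses this, and as written the book-keeping does not close. The direct verification can be made to work, but it is more involved than you suggest, which is likely why the paper chose to defer to the established small-model flatness instead. I would recommend either filling in the $\lambda^k$ contribution from the contraction carefully, or simply adopting the paper's shortcut of citing the flatness of $\nabla^B$ for the $j\le r$ case.
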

\begin{proof}
 The underlying (T)-structure of $(\cH^{B,\bigquantum},\nabla^{B,\bigquantum})$ is flat, as it is obtained from the $R$-linear (T)-structure $(\cH_R^{B,\bigquantum} ,\nabla_R^{B,\bigquantum} )$, whose flatness was established in \cref{B side R-linear T-structure}.
    We check the flatness in the $u$-direction as follows.
    We have:
\begin{align*}
      \big[   \nabla_{u\partial_u}^{B, {\rm big}} , \nabla_{\partial_{y_{j,k}}}^{B,\bigquantum} ]
        &= \big[\Gr^{B,\bigquantum} , \nabla_{\partial_{y_{j,k}}}^{B,\bigquantum}\big] -\big[ \nabla_{E_B^{\bigquantum}}^{B,\bigquantum} , \nabla_{\partial_{y_{j,k}}}^{B,\bigquantum} \big]  
 \end{align*}
 Since the underlying (T)-structure of $(\cH^{B,\bigquantum} , \nabla^{B,\bigquantum})$ is flat, we have 
\begin{align}\label{eq:commutator-euler-vector-field-B-model}
    \big[ \nabla_{E_B^{\bigquantum}}^{B,\bigquantum} , \nabla_{\partial_{y_{j,k}}}^{B,\bigquantum} \big] &= \nabla_{[E_B^{\bigquantum} , \partial_{y_{j,k}}]}^{B,\bigquantum} = - (1- \ell (v_j) - \vert k\vert ) \nabla_{\partial_{y_{j,k}}}^{B,\bigquantum} .
\end{align}
We claim that the grading structure is compatible with the connection, in the sense that 
\begin{equation}\label{eq:grading-compatible-connection}
    \big[\Gr^{B,\bigquantum} , \nabla_{\partial_{y_{j,k}}}^{B,\bigquantum} \big]=-(1 - \ell(v_j) - |k|)\nabla_{\partial_{y_{j,k}}}^{B,\bigquantum}. 
\end{equation}
When $1\leq j\leq r$, the equality holds because $\nabla^{B}$ is flat.
When $r+1\leq j\leq N$, the equality holds because for any $[\omega]\in \cH^{B, big}$ we have
\begin{align*}
   \Gr^{B,\bigquantum}([f_{j}\omega])-f_{j}\Gr^{B,\bigquantum}([\omega]) &= [\ell(v_j)f_{j}\omega],  \\
   \Gr^{B,\bigquantum}([u^{-1}\omega])-u^{-1}\Gr^{B,\bigquantum}([\omega])&=-[u^{-1}\omega], \\
   \Gr^{B,\bigquantum}([\lambda^{k}\omega])-\lambda^{k} \Gr^{B,\bigquantum}([\omega])&=[|k|\lambda^{k}\omega],
\end{align*}
where we use \cref{eq: mu_b leibniz rule} and the homogeneity of the elements $f_j$, $u^{-1}$ and $\lambda^k$.
Note that $f_j$ is homogeneous of degree $\ell (v_j)$ by our choice of lift in \cref{constr:unfolded-potential}.
Now, flatness follows from equations \eqref{eq:commutator-euler-vector-field-B-model} and \eqref{eq:grading-compatible-connection}, concluding the proof.
\end{proof}

Similar to the $A$-model, we note that the quotient maps $\bbk\dbb{\by}\rightarrow \bbk\dbb{\by_{\leq r}}$ and $R\dbb{y}\rightarrow R\dbb{y_{\leq r}}$ induce an unfolding of equivariant F-bundles $\iota_B\colon\cF^B\rightarrow \cF^{B,\bigquantum}$.

\begin{proposition}\label{prop: B model is maximal}
    The morphism $\iota_B\colon \cF^{B}\rightarrow \cF^{B,\bigquantum}$ is a maximal unfolding of $\bbk$-linear equivariant F-bundles, with cyclic vector given by $[\Omega ]  :=\Phi_{\rm mir}^{-1}(1),$ where $1 \in H_T^{\ast}(X,\bbk)$ is the cyclic vector on the $A$-side.
\end{proposition}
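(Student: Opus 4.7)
The plan is to establish two things: that $\iota_B$ is a morphism of equivariant F-bundles meeting the unfolding conditions, and that the underlying $R$-linear (T)-structure is maximal with $[\Omega] = \Phi_{\mir}^{-1}(1)$ as cyclic vector.

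The first part follows directly from the construction. The base maps $\bbk\dbb{\by} \to \bbk\dbb{\by_{\leq r}}$ and $R\dbb{y} \to R\dbb{y_{\leq r}}$ are quotients by the closures of the ideals of the added variables. Since $\widetilde{\cW}|_{y_{r+1}=\dots=y_N=0} = \cW$ by \cref{constr:unfolded-potential}, restricting $(\cH_R^{B,\bigquantum}, \nabla_R^{B,\bigquantum})$ recovers $(\cH_R^B, \nabla_R^B)$, and restricting $(\cH^{B,\bigquantum}, \nabla^{B,\bigquantum})$ recovers $(\cH^B, \nabla^B)$, including the $u$-direction since $E_B^{\bigquantum}$ and $\Gr^{B,\bigquantum}$ restrict to $E_B$ and $\Gr^B$ (using $\ell(v_j) = 1$ for $1 \le j \le r$). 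Functoriality packages this into an unfolding of equivariant F-bundles.

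The main work is to prove maximality by computing the evaluation map $\mu_{[\Omega]}\colon \bigoplus_{j=1}^N R\partial_{y_j} \to H$, where $H = \cH_R^{B,\bigquantum}/(y,u)\cH_R^{B,\bigquantum}$. For $1 \le j \le r$, the small mirror symmetry of \cref{small mirror} intertwines the connections and sends $[\Omega]$ to $1$, so
\[\Phi_{\mir}^{u=0,y=0}(\mu_{[\Omega]}(\partial_{y_j})) = \mu_1^A(\partial_{\tau_j}) = \sigma_{v_j} + \lambda_{i_j}.\]
For $r < j \le N$, since $\widetilde{\cW} - \cW = \sum_{k > r} y_k f_k$ with $f_k$ independent of the $y_i$'s, and since $p$ and $\Omega$ are also independent of these new variables, both $\cL_{\partial_{y_j}}\Omega$ and $\iota_{\partial_{y_j}} p^*\langle \lambda_i^*, \mathrm{mc}_{T^\vee}\rangle$ vanish, leaving $\mu_{[\Omega]}(\partial_{y_j}) = [f_j\Omega]|_{y=0}$. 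By construction of $f_j$ as a lift of $(\Phi_{\mir}^{u=0,y=0})^{-1}(\sigma_{v_j})$, this corresponds to $\sigma_{v_j}$ under the Jacobi isomorphism.

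Under the identification $\Phi_{\mir}^{u=0,y=0}\colon H \xrightarrow{\sim} H_T^*(X, \bbk)$ of \cref{prop: iso_small_Dmodule current version}, the matrix of $\mu_{[\Omega]}$ in the $R$-basis $\{\sigma_{v_j}\}_{j=1}^N$ thus has $1$ on each diagonal entry and entries $\lambda_{i_j}$ ($1 \le j \le r$) in the row of the identity $e \in W^P$. Since $v_j = s_{i_j} \ne e$ for $1 \le j \le r$, these off-diagonal entries lie in distinct positions from the diagonal, and the matrix becomes triangular with unit diagonal after a suitable reordering, hence invertible over $R = \bbk[\lambda]$. This yields maximality. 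The anticipated main obstacle is justifying the vanishing of the Lie derivative and equivariant correction terms for $j > r$, which should follow cleanly from the fact that the unfolding in \cref{constr:unfolded-potential} modifies only the $\cW$-part while leaving $p\colon X_P^\vee \to T^\vee$ and $\Omega$ unchanged.
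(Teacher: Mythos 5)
Your proof is correct and follows essentially the same strategy as the paper: compute $\mu_{[\Omega]}(\partial_{y_j})$ separately for $1 \le j \le r$ (via the small mirror map) and for $r < j \le N$ (via the explicit definition and \cref{constr:unfolded-potential}), then check invertibility over $R$. You are actually slightly more careful than the paper's proof, which for $1 \le j \le r$ writes $[u\nabla^A_{\partial \tau_j}]\vert_{\tau=u=0}(1) = \sigma_{v_j}$, omitting the equivariant correction $\lambda_{i_j}$ that is present by \cref{prop: A model maximal}; your triangularity argument with the $\lambda_{i_j}$ entries landing in the row of $\sigma_e$ is exactly what is needed to close that small gap, and your justification of the vanishing of the Lie derivative and $p^*$-terms for $j > r$ spells out what the paper compresses into ``by definition.''
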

\begin{proof}
 We only need to check that $\cF^{B,\bigquantum}$ is a maximal unfolding of $\cF^B$ with cyclic vector induced from $[\Omega ]$.
By definition of $\cF^{B,\bigquantum}$, for $r < j\leq N$ we have
\[\Phi_{\rm mir}\big([u\nabla^{B, {\rm big}}_{R, \partial_{y_j}}]\vert_{y=u=0}([\Omega] )\big)= \Phi^{u=0, y=0}_{\rm mir}(\bar{f}_j)=\sigma_{v_j}. \]
For $1\leq j\leq r$, by \cref{small mirror} we have 
\[\Phi_{\rm mir}\big([u\nabla^{B, {\rm big}}_{R, \partial_{y_j}}]\vert_{y=u=0}([\Omega] )\big) = [u\nabla_{\partial t_j}^{A} ]\vert_{t=u=0} (1) = \sigma_{v_j} .\]
Since $\Phi_{\rm mir}$ is an isomorphism and $\lbrace \sigma_{v_j} , 1 \leq j\leq N\rbrace$ is a basis of $H_T^{\ast} (X,\bbk ) $, we conclude that the unfolding is maximal.
\end{proof}

\subsubsection{Mirror symmetry}
Now we can show the big $D$-module mirror symmetry for $X=G/P$ in the following sense.
\begin{theorem}[Equivariant big mirror symmetry]\label{thm:big-mirror-symmetry-flag}
    There exists a unique isomorphism of equivariant F-bundles
    \[\big( (\mir_{\bbk}^{\bigquantum} , \Phi_{\mir,\bbk}^{\bigquantum} ), (\mir^{\bigquantum}, \Phi_{\mir}^{\bigquantum} ) \big)\colon \cF^{B,\bigquantum} \longrightarrow \cF^{A,\bigquantum}\] 
    extending the small mirror map $\big((\mir_\bbk,{\Phi_{\mir,\bbk}}), (\mir,\Phi_\mir) \big)$ in \cref{small mirror}.
\end{theorem}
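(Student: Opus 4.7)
The strategy is to exhibit both $\iota_A\colon \cF^A\to \cF^{A,\bigquantum}$ from \cref{prop: A model maximal} and $\iota_B\colon \cF^B\to \cF^{B,\bigquantum}$ from \cref{prop: B model is maximal} as maximal unfoldings of the same equivariant F-bundle $\cF^B$, with cyclic vectors induced from a common element, and then invoke the uniqueness part of our equivariant unfolding theorem \cref{thm:max-unfolding-arbitrary-rank}(2).

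First, I would precompose $\iota_A$ with the small mirror isomorphism from \cref{small mirror} to obtain an unfolding $\cF^B\to \cF^{A,\bigquantum}$. Since maximality is a property of the underlying $R$-linear (T)-structure and is preserved by isomorphisms, this is again a maximal unfolding, with cyclic vector $\Phi_{\mir}^{-1}(1)=[\Omega]\in \cH_R^B\vert_{y=u=0}$. This matches the cyclic vector of $\iota_B$, so I would have two maximal unfoldings of $\cF^B$ whose cyclic vectors are induced from the common element $v=[\Omega]$.

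The crucial step is then to verify condition (GC') on $v=[\Omega]$ for the $R$-linear (T)-structure $(\cH_R^B,\nabla_R^B)$. Through the small mirror isomorphism this is equivalent to checking (GC') for $1\in H_T^*(X,\bbk)$ in the A-model small $R$-linear (T)-structure. The evaluation map $\mu_1$ sends $\partial_{\tau_j}\mapsto \sigma_{v_j}+\lambda_{i_j}=c_1^T(L_j)$ for $1\leq j\leq r$, so the orbit of $1$ under $R[\im\mu]$ is the $R$-subalgebra of $H_T^*(X,\bbk)$ (with the small quantum product at $\tau=0$) generated by these divisor classes. After base change to $\Frac(R)=\bbC(q,\lambda)$, \cref{lemma:QHG/PH2} shows that this subalgebra is the entire localized small equivariant quantum cohomology; note that $\tilde q_j=q_j$ at $\tau=0$ is already invertible in $\bbk=\bbC(q)$, so no further localization is needed. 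This establishes (GC').

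Once (GC') is in place, \cref{thm:max-unfolding-arbitrary-rank}(2) produces a unique isomorphism between the two maximal unfoldings of $\cF^B$, which is exactly a unique isomorphism $\cF^{B,\bigquantum}\xrightarrow{\sim}\cF^{A,\bigquantum}$ extending the small mirror map. The main obstacle is precisely this (GC') verification: it is the step where the equivariant setting becomes essential, since the non-equivariant analogue of \cref{lemma:QHG/PH2} fails in general (for instance in the $\SG(2,2n)$ example discussed in the introduction), which is exactly why one cannot apply Hertling--Manin directly to the non-equivariant big quantum $D$-module. All other inputs, namely the small mirror symmetry, the maximality of both unfoldings, and the flatness of the big B-model connection, have already been established in the preceding sections.
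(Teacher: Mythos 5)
Your proposal is correct and follows essentially the same route as the paper: exhibit both $\cF^{A,\bigquantum}$ and $\cF^{B,\bigquantum}$ as maximal unfoldings of a common small equivariant F-bundle with the same cyclic vector (you work over $\cF^B$, the paper over $\cF^A$, an immaterial choice), verify (GC') by transporting to the A-side and invoking \cref{lemma:QHG/PH2}, and then apply \cref{thm:max-unfolding-arbitrary-rank}(2). Your remark that the orbit should be computed inside the small quantum product at $\tau=0$ (rather than the classical cup product), and that $\tilde q_j=q_j$ is already invertible in $\bbk=\bbC(q)$, is a helpful clarification of a point the paper states somewhat loosely.
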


\begin{proof}
By \cref{prop: A model maximal}, we have a maximal unfolding of the small $A$-model equivariant F-bundle, given by $\cF^A\rightarrow \cF^{A,\bigquantum}$. Composing the small mirror isomorphism  $\big((\mir_\bbk,{\Phi_{\mir,\bbk}}), (\mir,\Phi_\mir) \big): \cF^A\rightarrow \cF^B$ of \cref{small mirror}, with the $B$-model maximal unfolding $\iota: \cF^B\rightarrow \cF^{B,\bigquantum}$, we obtain another maximal unfolding $\cF^A\rightarrow\cF^{B,\bigquantum}$.

To apply \cref{thm:max-unfolding-arbitrary-rank} and obtain a unique isomorphism $\cF^{A,\bigquantum} \rightarrow \cF^{B,\bigquantum}$, we need to check the small $A$-model equivariant F-bundle satisfies the (GC') condition as in \cref{def:IC-GC-conditions}.
We take $v=1\in H^*_T(X,\bbk) =\cH^A|_{\tau=u=0}$. After base change to $\Frac(R)=\bbk(\lambda)$, we have the evaluation map of $\bbk(\lambda)$-modules:
\begin{align} 
    \mu_{v=1}\colon \bigoplus_{1\leq j\leq r} \bbk(\lambda)\partial_{\tau_j} &\longrightarrow H_T^*(X,\bbk)\otimes \bbk(\lambda),\\
     \partial_{\tau_j} &\longmapsto \mu (\partial_{\tau_j})(1)= \sigma_{v_j}+\lambda_{i_j} . \nonumber
\end{align}
By \cref{lemma:QHG/PH2}, $H^*_T(X,\bbk)\otimes \bbk(\lambda)$ is generated as a $\bbk(\lambda)$-algebra by $\sigma_{v_j}$ for $1\leq j\leq r$, so the orbit of $v=1$ under the action of $R[\im \mu]$ is the fiber $\cH^{A}_{R}/ (\tau_1,\dots, \tau_N,u)\cH_R^{A}\otimes_R \Frac (R)$, and (GC') is verified.
The proof is complete.
\end{proof}

We deduce a non-equivariant limit of the theorem by applying the base-change associated to the quotient map $R\rightarrow R/ (\lambda )$.
This corresponds to setting $\lambda=0$ in all the previous formulas.
We use the superscript $\lambda_0$ to indicate that the non-equivariant limit is taken.
We note that since $R/(\lambda )\simeq \bbk$, in the non-equivariant limit the equivariant F-bundles can be reduced to $\bbk$-linear F-bundles (see \cref{remark:when-R-is-k}).

On the A-side, we have 
$\tau_{i}=\sum \lambda^{k}\tau_{i, k}= \tau_{i, 0}$. 
The big quantum $D$-module $(\cH^{A,\bigquantum, \lambda_0},\nabla^{A,\bigquantum, \lambda_0})$ is an F-bundle over $\bbk\dbb{\tau} = \bbk\dbb{\tau_1,\dots, \tau_N}$ defined by
\begin{align*}
\cH^{A,\bigquantum,\lambda_0} &= H^*(X,\bbC)\otimes \bbk\dbb{\tau,u} , \\
\nabla^{A,\bigquantum, \lambda_0}_{\partial_{\tau_{j}}}&= \partial_{\tau_{j}}+u^{-1}\sigma_{v_j}\star^{\bigquantum ,\lambda_0}  ,\\
    \nabla_{u\partial_u}^{A,\bigquantum, \lambda_0} &= \Gr^{A,\bigquantum,\lambda_0} - \nabla_{E_A^{\bigquantum,\lambda_0}}^{A,\bigquantum,\lambda_0} ,
\end{align*}
where $\Gr^{A,\bigquantum,\lambda_0} = u\partial_u + E_A^{\bigquantum,\lambda_0} + \mu_A$, with the Euler vector field
\[E^{{\rm big}, \lambda_0}_{A}\coloneqq \sum_{1\leq j\leq r}\frac{\deg(q_j)}{2}\partial_{\tau_{j}} +  \sum_{1\leq j\leq N} (1-\ell(v_j)) \tau_{j}\partial_{\tau_{j}}.\]

On the B-side, we have
$y_{i}=\sum \lambda^{k}y_{i, k}= y_{i, 0}$. 
The non-equivariant big Gauss-Manin system is an F-bundle $(\cH^{B,\bigquantum, \lambda_0},\nabla^{B,\bigquantum, \lambda_0})$ over $\bbk\dbb{y} = \bbk\dbb{y_1,\dots, y_N}$ defined by
\begin{align*}
\cH^{B,\bigquantum,\lambda_0}&= \coker\big(\bbk\dbb{y, u}\otimes \Omega^{\rm top -1}(X^\vee_P/Z) \xrightarrow{1 \otimes (ud+d\widetilde{\mathcal{W}}\wedge)} \bbk\dbb{y, u}\otimes \Omega^{\rm top }(X_P^\vee/Z)\big),
\\
\nabla^{B,\bigquantum, \lambda_0}_{\partial_{y_j}}&=\cL_{\partial_{y_j}}+\frac{\partial \widetilde{\mathcal{W}}}{\partial y_j}, \\
\nabla^{B,\bigquantum, \lambda_0}_{u\partial_u}&=u\partial_u-u^{-1}\widetilde{\cW} \qquad  (\mbox{by \cref{prop:nablaBnon-equiv} 
 below}). 
\end{align*}
 \begin{proposition} \label{prop:nablaBnon-equiv}
At the non-equivariant limit, we have $\nabla^{B,\bigquantum, \lambda_0}_{u\partial_u}=u\partial_u-u^{-1}\widetilde{\cW}$.
\end{proposition}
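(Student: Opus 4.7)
Setting $\lambda=0$ in $\nabla^{B,\bigquantum}_{u\partial_u} = \Gr^{B,\bigquantum} - \nabla^{B,\bigquantum}_{E_B^{\bigquantum}}$ and expanding both summands on a representative $\omega \in \Omega^{\top}(X_P^\vee/Z)\otimes\bbk\dbb{y,u}$, one has
\[
\Gr^{B,\bigquantum,\lambda_0}(\omega) = u\partial_u\omega + \cL_{E_B^{\bigquantum,\lambda_0}}\omega + \mu_B(\omega), \qquad \nabla^{B,\bigquantum,\lambda_0}_{E_B^{\bigquantum,\lambda_0}}[\omega] = \bigl[\cL_{E_B^{\bigquantum,\lambda_0}}\omega + u^{-1}E_B^{\bigquantum,\lambda_0}(\widetilde{\cW})\,\omega\bigr],
\]
so the Lie-derivative terms cancel in the difference. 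The proposition thus reduces to the cohomological identity
\[
    u\,[\mu_B(\omega)] = \bigl[\bigl(E_B^{\bigquantum,\lambda_0}(\widetilde{\cW}) - \widetilde{\cW}\bigr)\omega\bigr] \quad\text{in } \cH^{B,\bigquantum,\lambda_0}.
\]
By $\bbk\dbb{y,u}$-linearity of $\mu_B$ and of multiplication by functions, it suffices to verify this for $\omega = f\Omega$ with $f\in\cO(X_P^\vee)$ homogeneous of $\deg_B$-degree $2d$.

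The first key input is an integration-by-parts identity. Let $E_z$ denote the Euler vector field on $X_P^\vee$ generating the $\bbG_m$-action defining $\deg_B$, so that $E_z(h) = \deg_B(h)$ for $h\in\cO(X_P^\vee)$. Since $\Omega$ restricts to the logarithmic volume form on every torus chart it is $\bbG_m$-invariant, hence $\cL_{E_z}\Omega = 0$, and Cartan's formula together with the vanishing of relative $(\top{+}1)$-forms gives $d(\iota_{E_z}\Omega)=0$ and $dh\wedge\iota_{E_z}\Omega = E_z(h)\Omega$ for any function $h$. Applying $\partial = ud+d\widetilde{\cW}\wedge$ then yields
\[
    \partial(f\,\iota_{E_z}\Omega) = \bigl(u\deg_B(f) + f\deg_B(\widetilde{\cW})\bigr)\Omega,
\]
and since $\deg_B(f)\Omega = 2\mu_B(f\Omega)$ by construction, passing to the cokernel gives $2u\,[\mu_B(\omega)] = -[\deg_B(\widetilde{\cW})\omega]$.

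The second key input is the Euler homogeneity
\[
    \deg_B(\widetilde{\cW}) + 2\,E_B^{\bigquantum,\lambda_0}(\widetilde{\cW}) = 2\widetilde{\cW},
\]
which I verify summand by summand on $\widetilde{\cW} = \cW + \sum_{j>r} y_j f_j$. For $\cW$ this is precisely the weight-$2$ homogeneity of the Rietsch superpotential under the combined $\bbG_m$-action on $X_P^\vee$ in which $\tilde q_j$ has weight $\deg(q_j)$: after the substitution $\tilde q_j = q_j e^{y_j}$, the $\tilde q$-weight operator $\sum_{j\leq r}\deg(q_j)\tilde q_j\partial_{\tilde q_j}$ becomes $\sum_{j\leq r}\deg(q_j)\partial_{y_j}$, which equals twice the contribution $\sum_{j\leq r}\frac{\deg(q_j)}{2}\partial_{y_j}$ of the constant part of $E_B^{\bigquantum,\lambda_0}$ (the Euler part $(1-\ell(v_j))y_j\partial_{y_j}$ annihilates $\cW$ since $\ell(v_j)=1$ for $j\leq r$ and $\cW$ is independent of $y_j$ for $j>r$). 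For each summand $y_jf_j$ with $j>r$, the relations $\deg_B(f_j)=2\ell(v_j)f_j$ (since $f_j$ lifts $\sigma_{v_j}$) and $E_B^{\bigquantum,\lambda_0}(y_jf_j)=(1-\ell(v_j))y_jf_j$ (using that $f_j$ is $y$-independent by \cref{constr:unfolded-potential}) combine to give $2y_jf_j$. Substituting the Euler identity into $2u[\mu_B(\omega)] = -[\deg_B(\widetilde{\cW})\omega]$ and dividing by $2$ produces the desired cohomological identity, completing the proof.

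The main obstacle is bookkeeping: carefully disentangling the three gradings at play ($\deg_B$ on $\cO((G^\vee/P^\vee)\setminus K)$, the $\tilde q_j$-weights, and the $y_j$-weights from $E_B^{\bigquantum,\lambda_0}$), and tracking how they interact through the substitution $\tilde q_j = q_je^{y_j}$.
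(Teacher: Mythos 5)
Your proof is correct and, despite the reorganization, is essentially the paper's argument in disguise. Your Euler homogeneity identity $\deg_B(\widetilde{\cW}) + 2E_B^{\bigquantum,\lambda_0}(\widetilde{\cW}) = 2\widetilde{\cW}$ is precisely the paper's $\Gr^{B,\bigquantum}(\widetilde{\cW}\Omega) = \widetilde{\cW}\Omega$ unpacked (using $u$-independence of $\widetilde{\cW}$), and the auxiliary $(K{-}1)$-form the paper feeds into $\partial = ud + d\widetilde{\cW}\wedge$, namely $\eta = \sum_i g\,\iota_{\partial_{z_i}}\mu_B(z_i\Omega)$, is exactly $\tfrac{1}{2}\,g\,\iota_{E_z}\Omega$ in your notation. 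Packaging the integration by parts through the global vertical Euler field $E_z$ (rather than torus-chart coordinates) is a clean presentational improvement; the one thing to tighten is that $E_z$ is the \emph{vertical} part of the $\bbG_m$-action of \cite[Lemma 4.6]{chow2024dmodulemirrorconjectureflag} (the full action also moves $\tilde q$, so $E_z(h)=\deg_B(h)$ only for the fiberwise Euler field), and the weight-$2$ homogeneity of $\cW$ should be attributed — the paper derives it from $\Phi_{\mir}([\cW\Omega]) = c_1(G/P)$ \cite[Lemma 4.3]{chow2025gammaconjectureiflag} together with gradedness of $\Phi_{\mir}$.
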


\begin{proof}
In this proof, to simplify notations we drop the superscript $\lambda_0$.
For any $[\omega]\in \cH^{B,\bigquantum}$, we can write $\omega= g\Omega$ where  $g\in \mathcal{O}(X_{P}^{\vee})\dbb{y,u}$  and $\Omega$ is given in \cref{prop: iso_small_Dmodule current version}(2). We have
\begin{align*}
\nabla^{B , {\rm big}}_{u\partial_{u}}([g\Omega])&=[u\partial_{u}(g\Omega)+E_{B}^{\rm big}(g)\Omega+\mu_{B}(g\Omega)]-\nabla_{E_{B}^{\rm big}}^{B, {\rm big}}([g \Omega])\\
&= [u\partial_{u}(g\Omega)+\mu_{B}(g\Omega)-u^{-1}E_{B}^{\rm big}(\widetilde{\cW})g\Omega  ]
\\&= [u\partial_{u}(g\Omega)+\mu_{B}(g\Omega)-u^{-1}g\Gr^{B,\bigquantum}(\widetilde{\cW}\Omega)+ g\partial_{u}(\widetilde{\cW})\Omega+g \mu_{B}(\widetilde{\cW}\Omega) ]. 
\end{align*}
We claim that $\Gr^{B,\bigquantum} (\tcW\Omega ) = \tcW \Omega$, or equivalently that $\tcW$ is homogeneous of degree $1$ as an element of $\cO (X_P^{\vee} ) \dbb{y,u}$.
By  \cite[Lemma 4.3]{chow2025gammaconjectureiflag}, we have $\Phi_{\rm mir}([\cW\Omega])=c_1(G/P)$, hence $\cW$ is homogeneous of degree $1$ since $\Phi_{\mir}$ is graded (\cref{prop: iso_small_Dmodule current version}(5)).
For $r< j\leq N$, our choice of lift in \cref{constr:unfolded-potential} shows that $\Gr^{B,\bigquantum}(y_{j}f_{j}\Omega)=E^{\bigquantum}_{B}(y_{j})f_{j}\Omega+y_{j}\mu_{B}(f_{j}\Omega)=y_jf_{j}\Omega$.
Since $\tcW$ is independent of $u$, we have
\[ 
\nabla^{B , {\rm big}}_{u\partial_{u}}([g\Omega])= [u\partial_{u}(g\Omega)-u^{-1}\widetilde{\cW}g\Omega]+ [u \mu_{B}(g\Omega)+g\mu_{B}(\widetilde{\cW}\Omega)].  \]
By \cite[Remark 4.13]{chow2024dmodulemirrorconjectureflag} and the references therein, after fixing local coordinates $(z_i)_i$ we can write 
$\Omega=\frac{\bigwedge_{i=1}^{K} dz_{i}}{z_{1}...z_{K}}$.
By \cref{eq: mu_b leibniz rule}, we have 
\[\mu_{B}(g\Omega)=\sum_{i=1}^{K} \frac{\partial g}{\partial z_{i}} \mu_{B}(z_{i} \Omega).\]
Consider  the  $K-1$ form  
\(\eta\coloneqq\sum_{i=1}^{K} g\iota_{\partial_{z_i}} \mu_B (z_i\Omega) \).
We have:
\begin{align*}
    ud\eta &= u\sum_{i,j=1}^K \frac{\partial g}{\partial z_j} dz_j\wedge \iota_{\partial_{z_i}}\mu_B (z_i\Omega ) + u g \sum_{i=1}^K d(\iota_{\partial_{z_i}} \mu_B (z_i\Omega ) ) \\
    &= u\sum_{i=1}^K \frac{\partial g}{\partial z_i}\mu_B (z_i\Omega ) = u\mu_B (g\Omega ),
\end{align*}
and 
\begin{align*}
    d\tcW\wedge \eta &= \sum_{i=1}^K \frac{\partial \tcW}{\partial z_i} dz_i\wedge (g\iota_{\partial_{z_i}} \mu_B (z_i\Omega ))\\
    &= g\mu_B (\tcW\Omega ).
\end{align*}
We deduce that 
\[[u\mu_B (g\Omega ) + g\mu_B (\tcW \Omega )] = [ (ud+d\tcW\wedge)\eta ] = 0,\]
concluding the proof.
\end{proof}

As a direct consequence of Theorem \ref{thm:big-mirror-symmetry-flag}, we obtain the following non-equivariant big $D$-module mirror symmetry.
\begin{theorem}[Non-equivariant big mirror symmetry]\label{thm:non-equivariant}  
The non-equivariant limit of  $(\mir_{\bbk}^{\bigquantum},\Phi^{\bigquantum}_{\mir,\bbk})$ in \cref{thm:big-mirror-symmetry-flag}
gives an isomorphism of $\bbk$-linear F-bundles
\[(\cH^{A,\bigquantum, \lambda_0},\nabla^{B,\bigquantum, \lambda_0})\overset{\sim}{\longrightarrow}(\cH^{B,\bigquantum, \lambda_0},\nabla^{B,\bigquantum, \lambda_0}).\]
This isomorphism is uniquely determined by the non-equivariant small mirror isomorphism.
\end{theorem}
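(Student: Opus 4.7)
The plan is to obtain the non-equivariant big mirror isomorphism as the base change of the equivariant big mirror isomorphism from \cref{thm:big-mirror-symmetry-flag} along the quotient map $R = \bbk[\lambda] \twoheadrightarrow R/(\lambda) \simeq \bbk$, i.e., by specializing to $\lambda = 0$. Since $\dim_{\bbk}(R/(\lambda)) = 1$, \cref{remark:when-R-is-k}(2) tells us that an equivariant F-bundle over $R/(\lambda)$ is equivalent to a $\bbk$-linear F-bundle, with variables $\by$ collapsing to $y_{i,0}$ identified with the non-equivariant $y_i$ (and similarly for $\btau$). Thus the base change has the right shape to produce a $\bbk$-linear F-bundle isomorphism.

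First I would verify that applying this base change to $\cF^{A,\bigquantum}$ and $\cF^{B,\bigquantum}$ produces precisely the $\bbk$-linear F-bundles $(\cH^{A,\bigquantum,\lambda_0}, \nabla^{A,\bigquantum,\lambda_0})$ and $(\cH^{B,\bigquantum,\lambda_0}, \nabla^{B,\bigquantum,\lambda_0})$ described in the statement. This is essentially bookkeeping: the equivariant quantum product $\star_\tau^{\bigquantum}$ reduces to its non-equivariant counterpart, and the Euler vector fields together with the grading operator $\mu_A$ simplify accordingly. On the B-side, the Gauss-Manin system and the residue term in $\lambda$ drop out, while the $u$-direction acquires the compact form $u\partial_u - u^{-1}\tcW$ by \cref{prop:nablaBnon-equiv}.

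Once both sides are matched, the non-equivariant big mirror isomorphism arises automatically: an isomorphism of equivariant F-bundles over $R$ induces an isomorphism of the base-changed equivariant F-bundles over $R/(\lambda)$, which by \cref{remark:when-R-is-k}(2) is just an isomorphism of the associated $\bbk$-linear F-bundles. This yields the desired isomorphism as the non-equivariant limit of $(\mir_\bbk^{\bigquantum}, \Phi_{\mir,\bbk}^{\bigquantum})$.

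For the uniqueness claim, I would trace the chain of uniqueness statements backwards: the equivariant big mirror isomorphism in \cref{thm:big-mirror-symmetry-flag} is uniquely determined by the equivariant small mirror of \cref{small mirror}, which in turn is uniquely determined by the map $\Phi_{\mir}$ of \cref{prop: iso_small_Dmodule current version}, characterized by $\Phi_{\mir}([\Omega]) = 1$ together with connection compatibility. Since these characterizing conditions descend to $\lambda = 0$, and the whole pipeline is functorial under base change along $R \to R/(\lambda)$, the entire construction commutes with non-equivariant specialization. I do not expect a serious obstacle: the essential content is already contained in \cref{thm:big-mirror-symmetry-flag,prop:nablaBnon-equiv}, and what remains is the functorial behavior of base change on equivariant F-bundles together with the explicit identification of the specialized data on both sides.
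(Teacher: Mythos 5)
Your existence argument is essentially the paper's: base change along $R = \bbk[\lambda] \twoheadrightarrow R/(\lambda) \simeq \bbk$, identify the specialized equivariant data with the non-equivariant F-bundles using \cref{remark:when-R-is-k}(2) and \cref{prop:nablaBnon-equiv}, and observe that the isomorphism of \cref{thm:big-mirror-symmetry-flag} restricts. The paper treats this step as immediate (``the existence is clear''), and your bookkeeping supports that.

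The uniqueness step has a genuine gap. The claim to prove is that \emph{any} isomorphism of $\bbk$-linear F-bundles
\[(\cH^{A,\bigquantum, \lambda_0},\nabla^{A,\bigquantum, \lambda_0})\longrightarrow(\cH^{B,\bigquantum, \lambda_0},\nabla^{B,\bigquantum, \lambda_0})\]
extending the non-equivariant small mirror isomorphism must coincide with the base-changed one. Your argument ``traces uniqueness backwards'' through the equivariant pipeline, but the uniqueness in \cref{thm:big-mirror-symmetry-flag} applies only to isomorphisms of \emph{equivariant} F-bundles, i.e., those carrying a compatible $R$-linear lift. After specializing to $\lambda=0$, one is working in the category of plain $\bbk$-linear F-bundles, and there is no a priori reason a competing $\bbk$-linear isomorphism should lift to an $R$-linear equivariant one so that the equivariant uniqueness could be invoked. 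Functoriality of base change shows that the equivariant isomorphism yields \emph{a} non-equivariant isomorphism; it does not rule out \emph{others}. Note also that you cannot repair this by running a non-equivariant Hertling--Manin argument directly, since the whole point of the paper is that the small quantum cohomology of $G/P$ need not be $H^2$-generated (so the (GC) hypothesis can fail) in the non-equivariant setting.

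The paper closes the gap differently: it cites a result from \cite{HYZZ_decomposition_and_framing_of_F_bundles} (their Proposition 4.27) which gives uniqueness of isomorphisms between F-bundles that are maximal and admit framings, determined by the restriction to the small locus. Maximality descends to the non-equivariant limit (the cyclic vector condition is preserved under base change), and framings exist by the results already used. That is the missing ingredient you would need to add: a direct rigidity statement for isomorphisms of maximal framed $\bbk$-linear F-bundles, rather than an attempt to propagate the equivariant uniqueness.
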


\begin{proof}
    The existence is clear.
    The uniqueness follows from \cite[Proposition 4.27]{HYZZ_decomposition_and_framing_of_F_bundles}, which applies because the F-bundles are maximal and admit framings.
\end{proof}

\bibliographystyle{plain}
\bibliography{dahema}

\end{document}